\declaretheorem[shaded={rulecolor=Lavender,
  rulewidth=1pt},within=section,name=Theorem]{thm}
\declaretheorem[shaded={rulecolor=Lavender,
  rulewidth=1pt},numbered=no,name=Theorem]{thm*}
\declaretheorem[shaded={rulecolor=Lavender, rulewidth=1pt},name=Theorem]{bigthm}
\declaretheorem[shaded={rulecolor=Lavender, rulewidth=1pt},
sibling=thm,name=Lemma]{lem}
\declaretheorem[shaded={rulecolor=Lavender, rulewidth=1pt},
numbered=no,name=Lemma]{lem*}
\declaretheorem[shaded={rulecolor=Lavender, rulewidth=1pt},
sibling=thm,name=Corollary]{cor}
\declaretheorem[style=definition, shaded={rulecolor=Lavender,
  rulewidth=1pt}, sibling=thm,name=Definition]{defn}
\declaretheorem[style=definition, shaded={rulecolor=Lavender,
  rulewidth=1pt}, sibling=thm,name=Unsolved problem]{uproblem}
\declaretheorem[style=definition, shaded={rulecolor=Lavender,
  rulewidth=1pt}, sibling=thm,name=Conjecture]{conj}
\declaretheorem[style=remark, shaded={rulecolor=Lavender,
  rulewidth=1pt}, sibling=thm,name=Remark]{rem}
\declaretheorem[style=remark, shaded={rulecolor=Lavender,
  rulewidth=1pt}, sibling=thm,name=WARNING]{warn}
\numberwithin{equation}{section}
\crefname{lem}{lemma}{lemmas}
\Crefname{lem}{Lemma}{Lemmas}
\crefname{bigthm}{theorem}{theorems}
\Crefname{bigthm}{Theorem}{Theorems}
\crefname{conj}{conjecture}{conjectures}
\Crefname{conj}{Conjecture}{Conjectures}
\crefname{bigconj}{conjecture}{conjectures}
\Crefname{bigconj}{Conjecture}{Conjectures}
\crefname{rem}{remark}{remarks}
\Crefname{rem}{Remark}{Remarks}
\crefname{thm}{theorem}{theorems}
\Crefname{thm}{Theorem}{Theorems}
\crefname{cor}{corollary}{corollaries}
\Crefname{cor}{Corollary}{Corollaries}
\crefname{defn}{definition}{definitions}
\Crefname{defn}{Definition}{Definitions}
\crefname{problem}{problem}{problems}
\Crefname{problem}{Problem}{Problems}
\newcommand{\seq}[2]{\ensuremath{#1_1, #1_2,\dots,#1_{#2},\dots}}
\DeclarePairedDelimiter\paren{\lparen}{\rparen}
\DeclarePairedDelimiter\sbrack{\lbrack}{\rbrack}
\DeclarePairedDelimiter\abs{\lvert}{\rvert}
\DeclarePairedDelimiter\norm{\|}{\|}
\DeclarePairedDelimiter\cbrace{\lbrace}{\rbrace}
\begin{document}
\bibliographystyle{halpha} \title{Random differences in Szemer\'edi's
  theorem and related results}


\author{Nikos Frantzikinakis} \address{Department of Mathematics,
  University of Crete, Heraklion, Greece} \author{Emmanuel Lesigne}
\address{Université François Rabelais, CNRS, LMPT UMR 7350, Tours,
  France} \author{M\'at\'e Wierdl} \address{Department of Mathematical
  Sciences, The University of Memphis, Memphis, TN, USA} \thanks{The
  first author has been partially supported by Marie Curie IRG 248008}\thanks{The
  third author has been partially supported by the US National Science
  Foundation, award number DMS-1102634}

\begin{abstract}
  We introduce a new, elementary method for studying random
  differences in arithmetic progressions and convergence phenomena
  along random sequences of integers. We apply our method to obtain
  significant improvements on two results. The first improvement is
  the following: Let $\ell$ be a positive integer and let $\mathbbm
  u_1\ge \mathbbm u_2\ge \dots$ be a decreasing sequence of
  probabilities satisfying $\mathbbm u_n\cdot n^{1/(\ell +1)}\to
  \infty$. Let $R=R^\omega$ be the random sequence we get by selecting
  the natural number $n$ with probability $u_n$. If $A$ is a set of
  natural numbers with positive upper density, then $A$ contains an
  arithmetic progression $a, a+r,a+2r,\dots,a+\ell r$ of length $\ell
  +1$ with difference $r\in R^\omega$. The best earlier result (by
  M. Christ and us) was the condition $\mathbbm u_n\cdot
  n^{2^{-\ell+1}}\to \infty$ with a logarithmic speed.  The new bound
  is better when $\ell \ge 4$.

  The other improvement concerns almost everywhere convergence of
  double ergodic averages: we (randomly) construct a sequence
  $r_1<r_2<\dots$ of positive integers so that for any $\epsilon>0$ we
  have $r_n/n^{2-\epsilon}\to \infty$ and for any measure preserving
  transformation $T$ of a probability space the averages
  \begin{equation}
    \frac1N\sum_{n<N}T^nF_1(x)T^{r_n}F_2(x)
  \end{equation}
  converge for almost every $x$. Our earlier best result was the
  $r_n/n^{(1+1/14)-\epsilon}\to \infty$ growth rate on the sequence
  $(r_n)$.
\end{abstract}

\maketitle

\dedicatory{We dedicate this paper to the 70th birthday of our friend,
  Karl Petersen, who raised the interest of the senior authors in
  random sequences 20 years ago.}
\tableofcontents

\section{Conventions}
\label{sec:conventions}

In this paper, a \emph{dynamical system} is a quadruple
$(X,\Sigma,\mu,T)$ where $(X,\Sigma,\mu)$ is a probability space, and
$T$ is a $\Sigma$-measurable, measure preserving transformation,
i.e. for any $A\in \Sigma$, we have $\mu(T^{-1}A)=\mu(A)$. Since the
$\sigma$-algebra $\Sigma$ doesn't play any special role in what
follows, we omit it from our notation, and we just write $(X,\mu,T)$
instead of $(X,\Sigma,\mu,T)$.

When we talk about functions defined on some measure space, we always
assume, the function is measurable.

We use the usual notational convention that the isometry of $L^1$
induced by a measure preserving transformation $T$ is also denoted by
the same letter $T$: $Tf(x)=f(Tx)$.

The arbitrary constants $c$ or $C$ appearing in an estimate are always
positive, and are absolute unless otherwise noted.  The same constant
may have a different value even in the same series of estimates.

The notation $\sum_{n<N}$ means $\sum_{0\le n<N}$, so $0$ is included
in the range of summation.

A \emph{lacunary sequence} of positive real numbers or integers is a
sequence $\cbrace*{a_1<a_2<a_3<\dots <a_n<\dots}$ such that
$\liminf_{n\to\infty} \frac{a_{n+1}}{a_n}>1$.

The \emph{cardinality} of a set $A$ is denoted by $|A|$.

\section{ Introduction, History}
\label{sec:introduction-history}

\subsection{Intersective and recurrent sets}
\label{sec:inters-recurr-sets}

For a set $A$ of positive integers, let $A(M)$ denote its\emph{ counting
function}, the number of elements in $A$ that are less than $M$,
\begin{equation}
  \label{eq:1}
  A(M)=\abs[\Big]{\cbrace*{a\mid a\in A, \,a<M}}.
\end{equation}
We denote by $\overline d(A)$ the \emph{upper density} of $A$
\begin{equation}
  \label{eq:2}
  \overline d(A)=\limsup_{M\to\infty} \frac{A(M)}{M}.
\end{equation}
The celebrated result of \cite{MR0422191,szem2} says that if $A$ is a
set of positive integers with positive upper density, then for any
positive $\ell$ one can find an $\ell+1$ long arithmetic progression
$a, a+r,a+2r,\dots,a+\ell r$ in $A$. One way to generalize this result
is to demand that the difference $r$ is from some prescribed sequence.
\begin{defn}
  Let $\ell$ be a positive integer.  We say that a set $R$ of positive
  integers is \emph{$\ell$-recurrent} or \emph{$\ell$-intersective} if
  any set $A$ of positive upper density contains an $\ell+1$ long arithmetic
  progression $a, a+r,a+2r,\dots,a+\ell r$ with the difference $r\in
  R$.

  We refer to $\ell$ as the \emph{order} of intersectivity.
\end{defn}
We note that an $\ell$-intersective set may not be
$(\ell+1)$-intersective, see \cite{MR2266880}.

The explanation for the terminology ``intersective'' is that saying
$R$ is $\ell$-intersective is equivalent with
\begin{equation}\label{eq:3}
   \overline d(A)>0\text{ implies } A\cap(A-r)\cap(A-2r)\cap\dots\cap (A-\ell r)\ne \emptyset \text{ for some }
  r\in R. 
\end{equation}
For the term ``recurrent'', it is known that \cref{eq:3} is equivalent
with the following: In every dynamical system $(X,\mu,T)$
\begin{equation}
  \label{eq:4}
  \mu(A)>0\text{ implies } \mu\paren*{ A\cap T^{-r}A\cap
    T^{-2r}A\cap\dots\cap T^{-\ell r}A}>0\text{ for some }
  r\in R.
\end{equation}
The direction ``recurrent $\implies$ intersective'' is known as
Furstenberg's transfer principle, and it was proved in
\cite{MR0498471}.

It is known that the following sets are $\ell$-intersective
\begin{itemize}
\item the squares $\cbrace{1^2,2^2,3^2\dots,n^2,\dots}$ or, more
  generally, the $k$th powers
  $\cbrace{1^k,2^k,3^k\dots,n^k,\dots}$. For $\ell =1$, this was a
  conjecture of Lovász, and was answered in
  \cite{MR0466059,MR487031}. The case of any $\ell$ was proved in
  \cite{MR1325795}.
\item the ``prime minus ones'' $\cbrace{2-1, 3-1,
    5-1,\dots,p-1,\dots}$, or the ``prime plus ones'' $\cbrace{2+1,
    3+1, 5+1,\dots,p+1,\dots}$ where $p$ is a prime number. For
  $\ell=1$ this was a question raised by Erdős, and was answered by
  \cite{MR487031}.  The $\ell=2$ case is solved in \cite{MR2361086},
  the case of all other $\ell$ is solved in \cite{MR2999293}.
\item
  $\cbrace*{\sbrack{1^{3/2}},\sbrack{2^{3/2}},\sbrack{3^{3/2}},\dots,\sbrack{n^{3/2}},\dots}$
  or, more generally,
  $\cbrace*{\sbrack{1^{\delta}},\sbrack{2^{\delta}},\sbrack{3^{\delta}},\dots,\sbrack{n^{\delta}},\dots}$
  for any fixed positive, non-integer $\delta$. For $\ell=1$, this
  probably was first noted in \cite{MR2638457}, but it's clear that
  Boshernitzan had been aware of the result. See \cite{MR725629}. It
  also follows from the general result in \cite{MR515054}. For all
  $\ell$, it is proved in \cite{MR2531366}.
\end{itemize}
These intersective sequences don't increase faster to infinity than
some power of $n$. In terms of the counting function $R(N)$ this
means that any of the above sets is not very thin since it satisfies
$R(N)>N^{\epsilon}$ for some positive $\epsilon$. Are there any
intersective sets which increase to infinity faster than any power of
$n$?  The question is not exactly a good one: \cite{MR515054}
considered sets with dilations. A $k$-long dilation is a set of
integers of the form $\cbrace*{d,2d,3d,\dots,kd}$. Now, if a set
contains arbitrarily long dilations, then it is a consequence of
Szemerédi's theorem, that the set is $\ell$-intersective for all
$\ell$.  This way, one can make $\ell$-intersective sequences with
arbitrarily thin counting functions.  On the other hand, a lacunary
sequence cannot contain arbitrarily long dilations and it is shown in
\cite{esII} that a lacunary sequence is not even $1$-intersective.

Are then the previously mentioned intersective sets exceptional in
some sense, or a typical set is also intersective?  To be able to talk
more conveniently about these questions, we introduce the concept of a
\emph{random sequence}:
\begin{defn}
  Given a sequence $\mathbbm u_1,\mathbbm u_2,\dots$ of probabilities,
  let $U_1,U_2,\dots$ be a sequence of independent $0-1$ valued
  random variables on a probability space $(\Omega,P)$ so that
  $P(U_n=1)=\mathbbm u_n=1-P(U_n=0)$.  We think of the sequence
  $U_1(\omega),U_2(\omega),\dots$ as the indicator of the set
  $R^\omega$ defined by
  \begin{equation}
    \label{eq:5}
    R^\omega=\cbrace*{n\mid U_n(\omega)=1}.
  \end{equation}
  We say that \emph{the random sequence has some given property} if
  there is a set $\Omega'\subset \Omega$ with $P(\Omega')=1$ so that
  $R^\omega$ has the property for all $\omega\in\Omega'$.  Note that
  the concept of a random sequence is always tied to a given sequence
  of probabilities $\mathbbm u_1,\mathbbm u_2,\dots$.
\end{defn}
\begin{warn}
  Unless we say otherwise, in this paper we assume that the
  probabilities $\mathbbm u_n$ form a \emph{non-increasing} sequence,
  $\cbrace*{\mathbbm u_1\ge \mathbbm u_2\ge \mathbbm u_3\ge \dots}$,
  and that they sum to $\infty$, $\sum_n \mathbbm u_n=\infty$.
\end{warn}
The standing assumption $\sum_n \mathbbm u_n=\infty$ guarantees, by
the strong law of large numbers (cf. \Cref{thm:2} in the Appendix),
that the random sequence $R$ is infinite, since its counting function
$R(N)=\sum_{n<N}U_n$ is almost surely asymptotically equal to
$\sum_{n<N} \mathbbm u_n$. We make the decreasing assumption because
intuitively it gives the expected behavior from a random set.

Now we can make the concept of a typical set completely precise: it is
the random set associated with a given sequence of probabilities
$\mathbbm u_n$. It is easy to see that if $\mathbbm u_n$ goes to $0$
slower than any negative power of $n$, that is $\mathbbm u_n\cdot
n^\epsilon\to \infty$ for any $\epsilon>0$, then the random sequence
contains arbitrarily long dilations (and intervals!), hence it is
$\ell$-intersective for all $\ell$.  On the other hand, if $\mathbbm
u_n < n^{-\epsilon}$ for some positive $\epsilon$, then the random
sequence doesn't have arbitrarily long dilations. Nevertheless,
\cite{MR515054} showed that if $\mathbbm u_n > \log n \cdot n^{-1/3}$,
then the random sequence is $1$-intersective. Can a random sequence be
arbitrarily thin but still intersective? Well, it is shown in
\cite{MR725629} that if $\mathbbm u_n\cdot n\to \infty$, then the
random sequence is still 1-intersective.  Paul Balister pointed out to
us that if $\mathbbm u_n=\frac{1}{n(\log n)^{1/2+\epsilon}}$, then the
random sequence is lacunary and hence is not 1-intersective. Anthony
Quas told us in private conversation that the method of
\cite{MR920079} can be used to show that if $\mathbbm u_n=1/n$, then
the random sequence is not $1$-intersective.

While quite a lot is known about 1-intersective sets, much less is
known about typical $2$ or higher order intersective sets.  We proved
in \cite{Fra} that if $\mathbbm u_n=n^{-b}$ for some some $b<1/2$,
then the random set is $2$-intersective.

For general $\ell$, we proved in \cite{Fra} that if
\begin{equation}
  \label{eq:6}
  \mathbbm u_n=n^{-b} \text{ for some } 0<b<\frac1{2^{\ell -1}},
\end{equation}
then the random set is $\ell$-intersective. This result was proved
independently from us in \cite{christ:random}.

The main result on random intersective sets in this current paper is
the following improvement of the above results
\begin{bigthm}[Random $\ell$-intersectivity theorem]
  \label{bigthm:1}
  Let $\ell$ be a positive integer, and assume the decreasing sequence
  $\mathbbm u_1\ge \mathbbm u_2\ge \dots$ of probabilities satisfy
  \begin{equation}
    \label{eq:7}
    \mathbbm u_n\cdot n^{1/(\ell +1)}\to \infty.
  \end{equation}
   
  Then the random sequence is $\ell$-intersective.
\end{bigthm}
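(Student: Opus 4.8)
\emph{Outline of a proof.}

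\textbf{Step 1 (transfer to dynamics).} I would begin by invoking Furstenberg's transfer principle \cite{MR0498471} to replace ``$\ell$-intersective'' by ``$\ell$-recurrent'': it suffices to show that, outside a single $P$-null set, for every dynamical system $(X,\mu,T)$ and every $A$ with $\mu(A)>0$ there is $r\in R^\omega$ with $\mu(A\cap T^{-r}A\cap\dots\cap T^{-\ell r}A)>0$. Writing
\[
 M_A(n):=\mu\paren*{A\cap T^{-n}A\cap\dots\cap T^{-\ell n}A}=\int \prod_{i=0}^{\ell}T^{in}\mathbbm 1_A\,d\mu
 \qquad(\text{the system being understood}),
\]
and $W_N:=\sum_{n\le N}\mathbbm u_n$, this amounts to showing that almost surely
\[
 \liminf_{N\to\infty}\ \frac1{W_N}\sum_{n\le N}U_n(\omega)\,M_A(n)>0
\]
simultaneously for all systems and all such $A$. [By \Cref{thm:2}, $W_N$ is a.s.\ asymptotic to $\abs{R^\omega\cap[1,N]}$, so this is exactly the normalised counting average of $M_A$ along $R^\omega$.]

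\textbf{Step 2 (isolate the random fluctuation).} Next I would split $U_n=\mathbbm u_n+b_n$, where $b_n:=U_n-\mathbbm u_n$ are independent, mean-zero and bounded by $1$. Furstenberg's multiple recurrence theorem in its positive-density (Varnavides) form gives $\liminf_N\frac1N\sum_{n\le N}M_A(n)\ge c(\mu(A),\ell)>0$, and since $\mathbbm u_n$ is non-increasing with $W_N\to\infty$, an Abel-summation (Tauberian) argument transfers this to the deterministic weighted average $\frac1{W_N}\sum_{n\le N}\mathbbm u_n M_A(n)$. So the problem collapses to proving that, almost surely and \emph{uniformly over all systems and all $A$},
\[
 \frac1{W_N}\sum_{n\le N}b_n(\omega)\,M_A(n)\xrightarrow[N\to\infty]{}0 .
\]
I would in fact first run, at this level, the standard reduction to a characteristic factor, i.e.\ prove the analogous statement for the $L^2(\mu)$-valued sums $\frac1{W_N}\sum_n b_n\prod_{i=1}^{\ell}T^{in}f_i$ with $\norm{f_i}_\infty\le1$, so that the non-structured part of the averages may be discarded; making that reduction work with \emph{random} weights is itself part of the difficulty.

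\textbf{Step 3 (the fluctuation estimate --- the crux).} Because the fluctuation is \emph{linear} in the independent $b_n$, a single second-moment computation gives
\[
 \mathbf E_\omega\,\abs*{\tfrac1{W_N}\sum_{n\le N}b_n\,M_A(n)}^{2}=\tfrac1{W_N^2}\sum_{n\le N}\mathbbm u_n(1-\mathbbm u_n)M_A(n)^2\le\tfrac1{W_N},
\]
and more generally its $2k$-th moments are $\ll_k W_N^{-k}$; Bernstein's inequality then yields sub-Gaussian tails with variance proxy $W_N^{-1}$. For a \emph{single} system this already gives the conclusion from nothing more than $\sum_n\mathbbm u_n=\infty$, via Borel--Cantelli along a subsequence $(N_k)$ with $W_{N_k}\to\infty$ geometrically and interpolation using the monotonicity of $W_N$. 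The real work is to make the bound uniform over the uncountable family of all systems and all $A$. For this I would organise a van der Corput / PET reduction of the $(\ell+1)$-term average in which the random weights are combined only \emph{linearly}: each of the (roughly) $\ell$ steps lowers the order of the average by one at the price of one extra shift $b_{n+h}$ and one extra summation over $h$, so that one is finally left with one-dimensional exponential sums $\frac1{W_N}\sum_n b_n b_{n+h_1}\cdots e(n\theta)$ in which the $b$'s occur with total multiplicity only $\ell+1$. Bounding the supremum of these over $\theta$ and over the shift parameters by the large-deviation estimate above, and summing the error over $h_1,\dots,h_\ell$, costs a factor of size $N^{\ell}/W_N^{\ell+1}$; this tends to $0$ precisely when $\mathbbm u_n\cdot n^{1/(\ell+1)}\to\infty$, because then $W_N\ge N\mathbbm u_N=N^{\ell/(\ell+1)}\cdot\paren*{N^{1/(\ell+1)}\mathbbm u_N}$ grows faster than $N^{\ell/(\ell+1)}$, i.e.\ $W_N^{\ell+1}\gg N^{\ell}$.

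\textbf{Expected main obstacle.} Everything of substance sits in Step 3, and the hardest point is arranging the reduction so that the random weights enter with multiplicity $\ell+1$ rather than the $2^{\ell-1}$ that a naive van der Corput iteration (Cauchy--Schwarz at each of $\ell-1$ stages) produces --- the latter being exactly what forces the weaker hypothesis $\mathbbm u_n\cdot n^{2^{-\ell+1}}\to\infty$ in \cite{Fra,christ:random}. Running in parallel is the demand that \emph{every} estimate be quantitative and independent of the system and of $A$, so that the characteristic-factor reduction of Step 2 and the large-deviation bound can be combined into one Borel--Cantelli argument over a single probability-one event. By comparison, the purely probabilistic passage from convergence in probability to almost-sure convergence is routine once a uniform quantitative bound is in hand.
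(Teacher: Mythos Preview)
Your Steps 1 and 2 are fine and line up with what the paper does: reduce to showing that the random fluctuation
\[
\frac{1}{W_N}\sum_{n\le N}(U_n-\mathbbm u_n)\,a_n
\]
tends to $0$ almost surely, \emph{uniformly} over all correlation sequences $a_n=\overline d(A\cap(A-n)\cap\dots\cap(A-\ell n))$ (or their dynamical analogues), the deterministic part being handled by Varnavides/Furstenberg and an Abel summation. The paper does not pass through a characteristic factor, but that is harmless.

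The gap is in Step 3. You correctly identify the obstacle --- iterated van der Corput doubles the weight at each of $\ell-1$ Cauchy--Schwarz steps, producing the exponent $2^{\ell-1}$ --- but your proposed cure, ``a van der Corput / PET reduction in which the random weights are combined only linearly,'' is not a method, it is a restatement of the goal. Van der Corput's inequality \emph{is} Cauchy--Schwarz; there is no variant that lowers the order of a multiple average while adding only one weight factor rather than squaring. The expression $\tfrac{1}{W_N}\sum_n b_n b_{n+h_1}\cdots e(n\theta)$ does not arise from any known PET step, and your cost accounting $N^{\ell}/W_N^{\ell+1}$ is asserted rather than derived. In short, the proposal names the destination without supplying a road.

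The paper's route is completely different and avoids van der Corput altogether. It treats the uniformity over $A$ as a \emph{covering-number} problem: after a finitary transference, the correlation sequences $a_n=\tfrac{1}{Q}\sum_{a<Q}f_0(a)f_1(a+n)\cdots f_\ell(a+\ell n)$ are parametrised by $0$--$1$ functions $f_i$ on intervals of length $\sim N$, and one needs the supremum over these. The key idea is to \emph{thin the supports} of $f_1,\dots,f_\ell$ to single residue classes modulo pairwise coprime integers $q_1,\dots,q_\ell$ with $q_1\cdots q_\ell=Q$; by the Chinese Remainder Theorem the inner sum over $a$ then has at most one term, so $|a_n|\le1$ is preserved, while the number of relevant $(f_0,\dots,f_\ell)$ drops from $\exp(cN)$ to $\exp\!\big(c(Q+\sum_i N/q_i)\big)$. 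Optimising $q_i\sim N^{1/(\ell+1)}$, $Q\sim N^{\ell/(\ell+1)}$ gives a class of size $\exp\!\big(O(N^{1-1/(\ell+1)})\big)$, and Bernstein plus the union bound then yields the uniform fluctuation estimate under exactly the hypothesis $W_N/N^{1-1/(\ell+1)}\to\infty$. This entropy/CRT argument is the new ingredient; nothing like it appears in your outline.
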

The condition in \cref{eq:7} is better than the condition in
\cref{eq:6} for $\ell \ge 4$. The earlier result that used the
condition in \cref{eq:6} was obtained by using Van der Corput's
inequality $\ell -1$ times, and this iteration poses the restriction
in \cref{eq:6}.  Our method doesn't use iteration, and it doesn't
reduce the case for $\ell$ to the case for $\ell -1$. In particular,
we give a new, quite elementary proof when $\ell =1$. If $\ell =1$,
\cref{eq:7} gives the restriction $b<1/2$, while we know, after
Boshernitzan, that every $b<1$ works.  Our proof doesn't use Fourier
analysis or the spectral theorem.
\begin{rem}
  \label{rem:1}
  We will see that instead of the individual behavior of the $\mathbbm
  u$ probabilities, as expressed in \cref{eq:7}, only their average
  behavior is important.  The exact assumption we use in the proof is
  \begin{equation}
    \label{eq:8}  
    \frac{\sum_{n<N}\mathbbm u_n}{N^{1-1/(\ell +1)}}\to \infty.
  \end{equation}
  In other words, only the counting function $R(N)$ of the
  random set $R$ matters: by the strong law of large numbers, the
  above is the same as saying that $R(N)/N^{1-1/(\ell +1)}\to\infty$.

  This indicates that, in fact, the proof applies equally well to a
  finitary version of \Cref{bigthm:1}; see
  \Cref{sec:finit-vers-crefb}.
\end{rem}

Now that we are close to the end of this introductory section, we state a
conjecture which, if true, would be an extension of Boshernitzan's
result for random $1$-intersective sets to $\ell$-intersective ones
\begin{conj}\label{sec:introduction-history-1}
  Suppose
  \begin{equation}
    \label{eq:9}
    \mathbbm u_n\cdot n \to\infty.
  \end{equation}
  
  Then the random sequence is $\ell$-intersective.
\end{conj}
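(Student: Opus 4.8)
The natural first move is the reduction already advertised in \cref{rem:1} and carried out in \cref{sec:finit-vers-crefb}: it suffices to prove a finitary form of $\ell$-intersectivity, namely that for every $\delta>0$ and every $\ell\ge 1$ there is $M_0(\delta,\ell)$ such that for all $M\ge M_0$, outside an event of small probability, every $A\subseteq\{1,\dots,M\}$ with $|A|\ge\delta M$ contains an $(\ell+1)$-term progression $a,a+r,\dots,a+\ell r$ with $r\in R^\omega$. Hypothesis \cref{eq:9} then enters in a very weak way. Writing $T_\ell(A;r)=\#\{a:\ a,a+r,\dots,a+\ell r\in A\}$, Varnavides' strengthening of Szemerédi's theorem gives $\sum_{r\le M}T_\ell(A;r)\ge c(\delta,\ell)M^2$, and since $T_\ell(A;r)\le M$ the \emph{good-difference set} $G_A=\{r\le M:\ T_\ell(A;r)\ge\tfrac12 c(\delta,\ell)M\}$ has $|G_A|\ge\tfrac12 c(\delta,\ell)M$; as $(\mathbbm u_n)$ is non-increasing, $\sum_{r\in G_A}\mathbbm u_r\ge |G_A|\,\mathbbm u_M=\tfrac{|G_A|}{M}\,(M\mathbbm u_M)$, so by \cref{eq:9} and independence $P(R^\omega\cap G=\emptyset)=\prod_{r\in G}(1-\mathbbm u_r)\le\exp\!\bigl(-c'(\delta,\ell)\,M\mathbbm u_M\bigr)\to 0$ for \emph{any one} fixed large set $G$.

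The whole difficulty is that $G_A$ varies with $A$ and there are $\sim 2^{\delta M}$ competing sets $A$, so a union bound over $A$ is hopelessly lossy (it would demand $\mathbbm u_n\gtrsim 1$). The plan is to collapse the family $\{G_A\}$ to a short list of representatives $G_1^*,\dots,G_{\Phi(M)}^*$, each of size $\ge\tfrac14 c(\delta,\ell)M$, with the property that every $G_A$ \emph{contains} one of them; then $P(\exists i:\ R^\omega\cap G_i^*=\emptyset)\le\Phi(M)\exp(-c'M\mathbbm u_M)$, and Borel--Cantelli along a geometric sequence of scales $M_j$ finishes (the gaps between the $M_j$ handled by monotonicity of the counting function, and the $M_j$ chosen so that the scales at which a given positive-density $A$ is dense are caught). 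To build the $G_i^*$ one would invoke the arithmetic regularity lemma to write $A=f_{\mathrm{str}}+f_{\mathrm{sml}}+f_{\mathrm{unf}}$: the small $L^2$-part and the Gowers-uniform part perturb $T_\ell(A;r)$ by at most $\eta M$ for all but $\eta M$ values of $r$ (a Cauchy--Schwarz together with a Gowers-norm estimate on the relevant multilinear average in $r$), while the structured part ranges over a compact family of complexity bounded in terms of $\delta,\ell$; partitioning that family into finitely many pieces of small sup-diameter and letting $G_i^*$ be a suitably shrunken super-level set of the centre of the $i$-th piece makes $G_i^*\subseteq G_A$ whenever $f_{\mathrm{str}}$ falls in piece $i$, with $|G_i^*|\ge\tfrac14 c(\delta,\ell)M$ by the version of the counting (Varnavides) inequality for $[0,1]$-valued functions. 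This route departs from the deliberately elementary method behind \cref{bigthm:1}; an alternative would be to sharpen the paper's own combinatorial estimate, replacing the exponent $1/(\ell+1)$ in \cref{eq:8} by $1$, but I do not see how to do that directly.

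The step I expect to be the true obstacle is the \emph{quantitative} one: pinning down the structured orbit at all $M$ scales costs $\Phi(M)=M^{O_{\delta,\ell}(1)}$, and then $\Phi(M)\exp(-c'M\mathbbm u_M)$ is summable along $M_j=2^j$ only if $M\mathbbm u_M\gg\log M$ — strictly stronger than \cref{eq:9}. Even if one could force $\Phi(M)=O_{\delta,\ell}(1)$, i.e.\ show that each $G_A$ contains one of \emph{boundedly} many fixed large sets (which the examples — progressions, Bohr sets — make plausible but which I cannot prove), the Borel--Cantelli step over a geometric sequence of scales still requires $\sum_j\exp(-c'2^{j}\mathbbm u_{2^{j}})<\infty$, hence $M\mathbbm u_M\gtrsim\log\log M$, again more than \cref{eq:9}. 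Reaching the sharp threshold $M\mathbbm u_M\to\infty$ — precisely the value at which, by Balister's observation, the random set turns lacunary and fails even $1$-intersectivity — therefore seems to require an argument that loses no logarithm in passing between scales: either a single global recurrence statement replacing the scale-by-scale union bound, or a genuinely $A$-uniform way of exploiting that $G_A$ has positive \emph{lower} density, so that $\sum_{r\in G_A}\mathbbm u_r=\infty$ follows from monotonicity and \cref{eq:9} alone and $R^\omega\cap G_A$ is almost surely infinite for each fixed $A$. Making that last observation uniform over all positive-density $A$ is, I believe, the crux, and is why \cref{sec:introduction-history-1} remains open.
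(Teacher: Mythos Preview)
The statement you were asked to prove is a \emph{conjecture}: the paper does not prove it, and indeed presents it as an open problem. So there is no paper proof to compare against. Your write-up is, correctly, not a proof either; it is an honest sketch of a strategy together with a diagnosis of exactly where and why it stalls, ending with the acknowledgment that the conjecture remains open. That is the right conclusion.

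It is nonetheless worth contrasting your heuristic route with the paper's stated reason for believing the conjecture. You proceed via Varnavides, extract for each dense $A$ a large ``good-difference'' set $G_A\subset[1,M]$, and then attempt to cover the family $\{G_A\}$ by boundedly or polynomially many representatives via an arithmetic regularity decomposition, so that a union bound over representatives becomes affordable. The paper's belief is framed differently: its method reduces everything to bounding the $\epsilon$-covering number, in $\ell^\infty$, of the class of order-$\ell$ correlation sequences $a_n=\tfrac1Q\sum_{a<Q}g(a)f_1(a+n)\cdots f_\ell(a+\ell n)$, and the authors conjecture that this covering number is $N^{O(1)}$ (rather than the $\exp(O(N^{1-1/(\ell+1)}))$ they actually prove). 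Both approaches are ultimately about reducing an exponentially large family to a polynomial one, but the objects being compressed are different---your good-difference sets $G_A$ versus the paper's correlation vectors $(a_n)_{n<N}$---and neither compression is known. Your analysis of the logarithmic losses (a $\log M$ from polynomial $\Phi(M)$, a $\log\log M$ from Borel--Cantelli over geometric scales) is accurate and pinpoints the same qualitative obstruction the authors face: any union-bound-over-scales argument seems to demand a bit more than $n\mathbbm u_n\to\infty$.
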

As we mentioned earlier, if this conjecture is true, it's sharp in the
sense that if $\mathbbm u_n=1/n$, then the random sequence is not
$1$-intersective.

\subsection{Convergence of ergodic averages}
\label{sec:conv-ergod-aver}

In this section $(X,\mu,T)$ denotes a dynamical system.
\begin{defn}
  Let $\ell$ be a positive integer.  We say that the set
  $R=\cbrace*{r_1<r_2<r_3<\dots}$ of positive integers is
  \emph{$\ell$-averaging in norm} if in any dynamical system the
  averages
  \begin{equation}
    \label{eq:10}
    \frac1{N}\sum_{n<N}T^{r_n}F_1\cdot T^{2r_n}F_2\cdots T^{\ell r_n}F_\ell
  \end{equation}
  converge in $L^1$ norm for any choice of bounded functions $F_i$ on
  $X$.
\end{defn}
While $1$-averaging sequences have been studied for a long time, the
study of $\ell$-averaging sequences for $\ell>1$ is more recent than
that of $\ell$-intersective sets: That the natural numbers are
$2$-averaging was proved in \cite{MR0498471}, the $\ell=3$ case was
proved in \cite{MR788966}, \cite{MR1827115}, and the breakthrough result of
\cite{MR2150389}, where they show that the set of natural numbers is
$\ell$-averaging in norm, came 30 years after Szemerédi's and
Furstenberg's result. Now the study of intersective and averaging
sequences have been done parallel to each other: The following sets
have been found $\ell$-averaging in norm
\begin{itemize}
\item the squares $\cbrace{1^2,2^2,3^2,\dots,n^2,\dots}$ or, more
  generally, $\cbrace{p(1),p(2),p(3),\dots,p(n),\dots}$, where $p(x)$
  is polynomial taking on positive integer values for large enough
  $n$. For $\ell = 1$, this is a folklore result, and follows from
  Weyl's estimate on trigonometric sums. For all $\ell$, it was proved
  in \cite{MR2191208} (weak convergence) and in \cite{MR2151605} for
  norm convergence.
\item $\cbrace{2, 3, 5,\dots,p,\dots}$, where $p$ is a prime
  number. For $\ell=1$ this follows from Vinogradov's classical
  estimates on the Fourier transform (trigonometric sum) of the
  sequence of primes.  The $\ell=2$ case is solved in
  \cite{MR2361086}, the case of all other $\ell$ is solved in
  \cite{MR2999293}.
\item
  $\cbrace*{\sbrack{1^{3/2}},\sbrack{2^{3/2}},\sbrack{3^{3/2}},\dots,\sbrack{n^{3/2}},\dots}$
  or, more generally,
  $\cbrace*{\sbrack{1^{\delta}},\sbrack{2^{\delta}},\sbrack{3^{\delta}},\dots,\sbrack{n^{\delta}},\dots}$
  for any fixed positive, non-integer $\delta$. For $\ell=1$, this may
  have been first noted explicitly only in \cite{MR2638457}, but it's
  clear that Boshernitzan had been aware of the result. See
  \cite{MR725629}. For all $\ell$, the result is proved in
  \cite{MR2762998}.
\end{itemize}
An important difference between intersective and averaging sets is
that just because a set contains arbitrarily long dilations, it may
not be an averaging set. In particular, it is not easy to construct
fast growing averaging sequences. To avoid trivial examples, such as
sequences consisting of longer and longer intervals of consecutive
integers, we are asking the following: are there any averaging sets
$\cbrace*{r_1,r_2<r_3<\dots}$ so that the gaps $r_{n+1}-r_n$ between
consecutive terms goes to infinity faster than any polynomial? It
follows from the estimates in \cite{MR0330068} (see also \cite[Theorem
2]{MR1625541}), that if, for some positive $\epsilon$,
$r_n=\sbrack*{\exp{\paren*{ (\log n)^{3/2-\epsilon}}}}$, then
$\cbrace*{ r_1<r_2<r_3<\dots}$ is $1$-averaging. It is not known if
this sequence is $2$-averaging (or, for that matter, if it's
$2$-intersective). How fast can an averaging sequence increase? Can it
increase arbitrarily fast? It was \cite{MR725629} (and, explicitly,
\cite{MR937581}) who showed that if
\begin{equation}
  \label{eq:11}
  \mathbbm u_n\cdot n\to\infty,
\end{equation}
then the random sequence is $1$-averaging. As in case of
$1$-intersective sets, it is known that this result is best possible:
\cite{MR1721622} proved that if
\begin{equation}
  \label{eq:12}
  \mathbbm u_n=\frac1n,
\end{equation}
then the random sequence is \emph{not} $1$-averaging.  How about
$2$-averaging random sets? Well, we still don't know of any
$2$-averaging random set $\cbrace*{r_1,r_2<r_3<\dots}$ which goes to
infinity as fast as the squares, that is when $\mathbbm u_n=n^{-1/2}$.

For all $\ell$, we proved in \cite{Fra} that if
\begin{equation}
  \label{eq:13}
  \mathbbm u_n=n^{-b} \text{ for some } 0<b<\frac1{2^{\ell -1}},
\end{equation}
then the random set is $\ell$-averaging. This result was also proved
independently from us by \cite{christ:random}. Our main result on
random $\ell$-averaging sets in the present paper is the following
\begin{bigthm}[Random $\ell$-averaging theorem]
  \label{bigthm:2}
  Let $\ell$ be a positive integer, and suppose the decreasing
  sequence $\mathbbm u_1\ge\mathbbm u_2\ge\dots$ of probabilities
  satisfies
  \begin{equation}
    \label{eq:14}
    \mathbbm u_n\cdot n^{1/(\ell +1)}\to \infty.
  \end{equation}

  Then the random sequence $R^\omega=\cbrace{r_1<r_2<r_3<\dots}$ is
  $\ell$-averaging in norm, and the limit of the averages
  \begin{equation}
    \label{eq:15}
    \frac1{N}\sum_{n<N}T^{r_n}F_1\cdot T^{2r_n}F_2\cdots T^{\ell r_n}F_\ell
  \end{equation}
  along the sequence is the same as the limit of the multiple averages
  \begin{equation}
    \label{eq:16}
    \frac1N \sum_{n<N}T^{n}F_1\cdot T^{2n}F_2\cdots T^{\ell n}F_\ell.
  \end{equation}

\end{bigthm}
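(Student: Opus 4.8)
The plan is to deduce \Cref{bigthm:2} from the intersective-type estimates already developed for \Cref{bigthm:1}, together with the Host--Kra--Ziegler structure theory for multiple ergodic averages. First I would reduce the statement to a single quantitative comparison: it suffices to show that, almost surely, for every dynamical system $(X,\mu,T)$ and all bounded $F_1,\dots,F_\ell$,
\begin{equation*}
  \Bigl\| \frac1N\sum_{n<N} U_n(\omega)\,T^{r_n}F_1\cdots T^{\ell r_n}F_\ell \;-\; \frac1{S_N}\sum_{n<N}\mathbbm u_n\, T^{n}F_1\cdots T^{\ell n}F_\ell \Bigr\|_{L^1} \longrightarrow 0,
\end{equation*}
where $S_N=\sum_{n<N}\mathbbm u_n$ and we have rewritten the average along $R^\omega$ using the indicator variables $U_n$; the second term converges to the limit of \cref{eq:16} by the Host--Kra theorem and an Abel-summation comparison of $\frac1{S_N}\sum \mathbbm u_n(\cdots)$ with $\frac1N\sum(\cdots)$, using that $\mathbbm u_n$ is non-increasing with divergent sum. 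So everything comes down to controlling the \emph{randomized} average minus its \emph{expectation}.

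The key step is a variance/seminorm estimate. Writing $f_n(x)=T^{r_n}F_1\cdots T^{\ell r_n}F_\ell$ is not available since $r_n$ is itself random, so instead I would work directly with $a_n(x) = T^{n}F_1(x)\cdots T^{\ell n}F_\ell(x)$ and the random weights $U_n-\mathbbm u_n$, estimating
\begin{equation*}
  \mathbb E_\omega \Bigl\| \frac1{S_N}\sum_{n<N}(U_n-\mathbbm u_n)\,a_n \Bigr\|_{L^2(X)}^2 = \frac1{S_N^2}\sum_{m,n<N}\mathbb E\bigl[(U_m-\mathbbm u_m)(U_n-\mathbbm u_n)\bigr]\int_X a_m\overline{a_n}\,d\mu.
\end{equation*}
By independence the expectation vanishes unless $m=n$, leaving $\frac1{S_N^2}\sum_{n<N}\mathbbm u_n(1-\mathbbm u_n)\|a_n\|_{L^2}^2 \le \frac1{S_N^2}\sum_{n<N}\mathbbm u_n \le 1/S_N \to 0$. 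This gives $L^2(X\times\Omega)$ convergence immediately; to upgrade to \emph{almost sure} convergence in $\omega$ I would run this second-moment bound along a sparse subsequence $N_k$ (e.g. $S_{N_k}\asymp k^2$, so $\sum_k 1/S_{N_k}<\infty$, Borel--Cantelli), and then fill the gaps between $N_k$ and $N_{k+1}$ using the oscillation/monotonicity of the partial sums of $U_n$, exactly as one does in the classical Bourgain-type return-times arguments; here the decreasing hypothesis on $\mathbbm u_n$ and the strong law for $R(N)$ (\Cref{thm:2} in the Appendix) keep the gap terms under control.

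The real content, and the main obstacle, is that the crude variance bound above only uses $\sum\mathbbm u_n=\infty$ and says nothing about the exponent $1/(\ell+1)$ — which is suspicious, because it would "prove" too much. The resolution is that the naive computation silently assumed the diagonal term dominates, but when we return to the \emph{genuine} random average $\frac1N\sum_{n<N}T^{r_n}F_1\cdots$ indexed by the enumeration $r_1<r_2<\dots$, reindexing introduces exactly the off-diagonal correlations $\int_X T^{ir_m}F_i\cdot T^{jr_n}F_j$ that the proof of \Cref{bigthm:1} was built to handle. Concretely, I expect to decompose each $F_i$ into a Host--Kra characteristic-factor part plus an error orthogonal to the $(\ell-1)$-step nilfactor; on the nilfactor the averages reduce to equidistribution of polynomial orbits on nilmanifolds along $R^\omega$, which is governed by a Weyl-type sum $\frac1N\sum_{n<N}U_n e(P(r_n))$ whose mean-square over $\omega$ is $O(1/S_N)$ by the same independence trick, while for the orthogonal complement one invokes precisely the hypothesis \cref{eq:14} — equivalently \cref{eq:8} with $\ell$ replaced by $\ell$ — via the estimate underlying \Cref{bigthm:1} to show the contribution is negligible. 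Thus the hard part is organizing the decomposition so that the exponent $1/(\ell+1)$ enters only through the already-proven intersective estimate and the nilsystem part costs nothing extra; once that bookkeeping is in place, norm convergence and the identification of the limit with \cref{eq:16} follow, and the almost-everywhere statement advertised in the abstract comes from combining this with a maximal inequality along the random sequence.
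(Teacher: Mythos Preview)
Your variance computation is correct, and you are right to be suspicious of it; but you have misdiagnosed the defect. The estimate
\[
  \mathbb E_\omega\Bigl\|\frac1{S_N}\sum_{n<N}(U_n-\mathbbm u_n)\,a_n\Bigr\|_{L^2(X)}^2\le \frac1{S_N}
\]
together with the subsequence/gap-filling argument really does prove that, for each \emph{fixed} system $(X,\mu,T)$ and each fixed tuple $F_1,\dots,F_\ell$, the random averages converge a.s.\ in $\omega$. The problem is not the reindexing $r_n\leftrightarrow n$---that is harmless, and there are no hidden off-diagonal terms---but that the null set you produce depends on $(X,\mu,T,F_i)$. The definition of ``$\ell$-averaging'' in this paper requires a \emph{single} full-measure set $\Omega'$ such that for every $\omega\in\Omega'$ the averages converge in \emph{every} dynamical system. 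Since there are uncountably many systems, you cannot intersect the null sets. This is exactly why $\mathbbm u_n=1/n$ passes your variance test yet is known \emph{not} to yield a $1$-averaging sequence: for each fixed system the averages converge a.s., but for a.e.\ $\omega$ one can cook up a system (depending on $\omega$) on which they fail.

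Your proposed repair via the Host--Kra decomposition does not close this gap. On the nilfactor side you still face uncountably many nilsystems and polynomials $P$, so the same uniformity problem reappears in the Weyl sums; and invoking ``the estimate underlying \Cref{bigthm:1}'' on the orthogonal complement is circular unless you unpack what that estimate actually is. The paper takes a completely different, and much more elementary, route: no structure theory is used beyond quoting the Host--Kra convergence theorem as a black box for the limit of \cref{eq:16}. Instead one applies a Calder\'on-type transference. Since $T$ is measure preserving, the $L^1$ norm of $\frac1{S_N}\sum(U_n-\mathbbm u_n)T^nF_1\cdots T^{\ell n}F_\ell$ equals its own average over shifts $a<Q$; bringing $\sup_{F_i}$ inside the integral and substituting $f_i(b)=F_i(T^bx)\mathbbm 1_{[0,Q+iN)}(b)$ replaces the supremum over all systems and all indicators by a supremum over \emph{finitely many} indicator functions $f_i:[0,Q+iN)\to\{0,1\}$ on the integers. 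After dualizing the absolute value with a $\pm1$-valued function $g$ on $[0,Q)$, one is left with precisely the expression handled in the proof of \Cref{bigthm:1}, and the same Chinese-remainder thinning of the supports of the $f_i$ brings the cardinality down to $\exp\bigl(O(N^{1-1/(\ell+1)})\bigr)$, at which point Bernstein's inequality plus the union bound give the exponential probability estimate. The exponent $1/(\ell+1)$ enters through this covering-number count, not through any nilsequence analysis.
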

Here, again, the real assumption the proof uses is in terms of the
average behavior of the the $\mathbbm u_n$, that is, in terms of the
counting function of the random sequence: the assumption in
\cref{eq:14} can be weakened to
\begin{equation}
  \label{eq:17}
  \frac{\sum_{n<N}\mathbbm u_n}{N^{1-1/(\ell +1)}}\to \infty.
\end{equation}

We have a conjecture similar to the one for intersective sets
\begin{conj}\label{sec:conv-ergod-aver-1}
  Suppose
  \begin{equation}
    \label{eq:18}
    \mathbbm u_n\cdot n\to\infty.
  \end{equation}
  
  Then the random sequence is $\ell$-averaging.
\end{conj}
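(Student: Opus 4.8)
Since \Cref{sec:conv-ergod-aver-1} is a conjecture, this is a plan of attack together with an indication of where the method of the present paper stops working; we discuss norm convergence (which is what ``$\ell$-averaging'' means here; the almost-everywhere version, for which $\mathbbm u_n\cdot n\to\infty$ should be sharp, would be harder still), and we take $\ell\ge2$, the case $\ell=1$ being a theorem of \cite{MR725629,MR937581}. The natural route is the one behind \Cref{bigthm:2}. By the structure theory of \cite{MR2150389} the averages \cref{eq:16} converge in $L^2$, and their limit is unchanged when each $F_i$ is replaced by its conditional expectation onto the characteristic factor $Z_{\ell-1}$, an inverse limit of $(\ell-1)$-step nilsystems. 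One then wants two things: \emph{(i)} a \emph{random generalized von Neumann inequality}, saying that $Z_{\ell-1}$ is characteristic for the random averages \cref{eq:15} as well --- that is, for almost every $\omega$, whenever the conditional expectation of some $F_{i_0}$ onto $Z_{\ell-1}$ vanishes, the averages in \cref{eq:15} tend to $0$ in $L^2(X)$; and \emph{(ii)} on a fixed nilsystem, the identification of the limit of the random averages with the limit of \cref{eq:16}. A proof of \Cref{sec:conv-ergod-aver-1} must accomplish both under the weaker hypothesis \cref{eq:18}.

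Step (ii) is the easy half, and already works under \cref{eq:18}. On a fixed $(\ell-1)$-step nilsystem $(G/\Gamma,m,x\mapsto gx)$ with continuous $F_i$, fix the point $x$; then $n\mapsto F_1(g^nx)\cdots F_\ell(g^{\ell n}x)$ is a \emph{fixed} bounded nilsequence $\psi$, and one shows that the reindexed average $\frac1N\sum_{n<N}\psi(r_n)$ concentrates around $\bigl(\sum_{n<N}\mathbbm u_n\bigr)^{-1}\sum_{n<N}\mathbbm u_n\,\psi(n)$: its variance is $O\bigl(1/R(N)\bigr)$ and $R(N)\to\infty$, giving convergence in $L^2(\Omega)$, which is then upgraded to an almost-sure statement along a sufficiently sparse subsequence $N_1<N_2<\cdots$ by Borel--Cantelli, together with the monotonicity and boundedness of the averages. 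Since $\mathbbm u_n$ is non-increasing, a summation by parts replaces the weighted average by $\frac1N\sum_{n<N}\psi(n)$ in the limit, whose value is $\overline\psi$; a Fubini argument over the pair $(x,\omega)$ --- or over $(\theta,\omega)$ after a spectral reduction when $\ell=1$ --- then promotes this to $L^1(X)$-convergence of \cref{eq:15} to the limit of \cref{eq:16}, for almost every $\omega$. None of this uses more than $\mathbbm u_n$ decreasing and $\sum_n\mathbbm u_n=\infty$, exactly as in \Cref{rem:1}.

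The obstacle is step (i), and this is where every known argument pays the price. The approach of \cite{Fra,christ:random} iterates van der Corput's inequality $\ell-1$ times, each application producing sums over pairs $r_n,r_{n'}$ (or over shifted copies $r_n+h$) whose random fluctuations must be controlled; the cost is the hypothesis $\mathbbm u_n\cdot n^{1/2^{\ell-1}}\to\infty$. The present paper avoids the iteration and instead controls, in a single step, one average attached to the $(\ell-1)$-dimensional family of linear ``profiles'' of \cref{eq:15}, which is what yields the sharper \cref{eq:14}. In both cases the bound comes from forcing the counting function $R(N)$ to exceed a fixed power of $N$ determined by the combinatorial complexity of the configurations in play, and $\mathbbm u_n\cdot n\to\infty$ lies far below any such threshold --- it permits $R(N)$ to be as small as $N^{o(1)}$. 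Bridging this gap seems to require a genuinely new input: either an estimate that isolates a \emph{bounded-complexity} part of each $T^{r_n}F_1\cdots T^{\ell r_n}F_\ell$ to be resolved randomly, with the remainder left to deterministic equidistribution on nilmanifolds; or a finitary transference in the spirit of the relative Szemer\'edi theorem, which would demand linear-forms conditions for the random set down to density comparable to $1/n$ --- precisely the regime in which \cite{MR1721622} shows the random averages can fail to converge. We regard the first route as the more promising, but have no proof.
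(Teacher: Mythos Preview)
The statement is a \emph{conjecture}; the paper does not prove it and does not claim to, so there is no ``paper's proof'' to compare against. What the paper \emph{does} offer, in \Cref{sec:gener-cont-proof}, is a specific route it believes would settle the conjecture: show that the $\epsilon$-covering number (in $\ell^\infty$) of the class of order-$\ell$ correlation sequences
\[
a_n=\frac1Q\sum_{a<Q}g(a)f_1(a+n)\cdots f_\ell(a+\ell n)
\]
is polynomial in $N$, i.e.\ $\exp\bigl(O(\log N)\bigr)$ rather than the $\exp\bigl(O(N^{1-1/(\ell+1)})\bigr)$ that the Chinese-remainder argument currently yields. The paper says explicitly that this belief is what motivates \Cref{sec:introduction-history-1,sec:conv-ergod-aver-1}. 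Your proposal takes a completely different route --- Host--Kra characteristic factors and reduction to nilsystems --- and never mentions covering numbers. That is a legitimate alternative line of attack, but you should be aware that the paper's own suggested mechanism is purely combinatorial and, if it worked, would bypass Host--Kra entirely: after Calder\'on transference the estimate is system-free, so uniformity over all dynamical systems is automatic.

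Your Step~(ii) is not as innocent as you make it sound. The Fubini/variance argument shows only that \emph{for a fixed nilsystem and fixed continuous $F_i$}, the random averages converge for almost every $\omega$; the null set of bad $\omega$ depends on the system and on the functions. The definition of ``$\ell$-averaging'' requires a single full-measure $\Omega'$ that works simultaneously for \emph{every} dynamical system, and there are uncountably many nilsystems. For $\ell=1$ this gap is closed by the spectral theorem together with compactness of the circle and continuity in $\theta$ of the exponential sums --- that is precisely the content of \cite{MR725629,MR937581}. For $\ell\ge2$ you would need an analogous \emph{uniform} statement over all nilsequences of bounded complexity under only $\mathbbm u_n\cdot n\to\infty$; this is plausible (the parameter spaces are finite-dimensional and one can hope for quantitative equidistribution), but it is not a Fubini argument, and it is not obviously easier than the covering-number conjecture. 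Your claim that it needs ``only $\mathbbm u_n$ decreasing and $\sum_n\mathbbm u_n=\infty$'' is too optimistic.

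On Step~(i) your diagnosis is accurate: both the iterated van der Corput method of \cite{Fra,christ:random} and the present paper's single-step method force $R(N)$ to beat a fixed positive power of $N$, and \cref{eq:18} allows $R(N)=N^{o(1)}$. Your two proposed escape routes (isolating a bounded-complexity piece to handle randomly, or a transference in the relative-Szemer\'edi style) are reasonable speculation, but neither is carried out, and you rightly do not claim a proof.
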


In \Cref{sec:pointw-conv}, we examine almost everywhere convergence,
and show in \Cref{bigthm:4} that a.e. convergence of the
\emph{difference} between the random averages and their expectations
follows if we add a mild speed condition to the assumption in \cref{eq:14}.

\subsection{Semirandom averages}
\label{sec:semirandom-averages}

Our third main topic is the convergence of double ergodic averages
with different rates. Combinatorial applications and norm-convergence
are also of interest, but the novelty is the pointwise convergence
result, since it's the first result for \emph{non-linear}
sequences. For linear sequences, it was \cite{MR1037434} who proved
that the averages $\frac1N \sum_{n<N} T^n F_1T^{2n}F_2 $ converge a.e.

Our main result for ``semirandom'' averages is
\begin{bigthm}[Semirandom pointwise averaging theorem]
  \label{bigthm:3}
  Suppose the decreasing sequence $\mathbbm u_1\ge\mathbbm
  u_2\ge\dots$ of probabilities satisfy
  \begin{equation}
    \label{eq:19}
    \mathbbm u_n\cdot \frac{n^{1/2}}{\log^{3+\delta}n}\to
    \infty\text{ for some } \delta>0,
  \end{equation}
  and
  \begin{equation}
    \label{eq:20}
    \mathbbm u_n\to 0.
  \end{equation}

  Then the random sequence $R^\omega=\cbrace{r_1<r_2<r_3<\dots}$
  satisfies, in every dynamical system $(X,\mu, T)$, for bounded
  functions $F_i$, that
  \begin{equation}
    \label{eq:21}
    \lim_{N\to\infty} \frac1N\sum_{n<N}T^nF_1(x)T^{r_n}F_2(x)=
    \overline F_1 \overline F_2\text{ for a.e. } x\in X,
  \end{equation}
  where $\overline F_i$ is the $L^2$ projection of $F_i$ to the
  $T$-invariant functions.
\end{bigthm}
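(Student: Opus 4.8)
The plan is to split the average $\frac1N\sum_{n<N}T^nF_1\cdot T^{r_n}F_2$ into its "expected" part and a fluctuation part, and to handle the two parts by completely different mechanisms. For the expected part, condition on $\omega$ and write $\frac1N\sum_{n<N}T^nF_1\cdot T^{r_n}F_2 = \frac1N\sum_{n<N}T^nF_1\cdot T^{r_n}F_2$; the point is that $r_n$, the $n$th element of $R^\omega$, is (almost surely, by the strong law of large numbers, cf.\ \Cref{thm:2}) asymptotically $n/\mathbbm u_n$ up to lower-order terms, and more usefully one reorganizes the sum over $n<N$ as a sum over the random set: one wants to compare $\frac1N\sum_{n<N}T^nF_1\cdot T^{r_n}F_2$ with $\frac{1}{\sum_{m<M}\mathbbm u_m}\sum_{m<M}\mathbbm u_m\, T^{a_m}F_1\cdot T^{m}F_2$ where $a_m$ is the (random) number of elements of $R^\omega$ below $m$. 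The key idea is that, because $\mathbbm u_n\to 0$ (hypothesis \cref{eq:20}), the "first coordinate" index $n$ in $T^nF_1$ is forced to range over a set of density zero inside the window $[0,M)$ relevant to the second coordinate, so after projecting onto the Kronecker factor (the only part of $F_1$ that survives the linear average $\frac1N\sum T^nF_1$) the first factor effectively decouples and one is left with $\overline F_1$ times a $1$-averaging average of $F_2$ along the random sequence, which by the classical random $1$-averaging result (the regime $\mathbbm u_n\cdot n\to\infty$ is not what we have, but the $L^2$/pointwise maximal inequality of Bourgain's type from \cite{MR937581,MR1037434} applies under \cref{eq:19}) converges a.e.\ to $\overline F_1\,\overline F_2$.

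More concretely, I would run the standard Calderón/Bourgain transference: it suffices to prove an a.e.\ maximal inequality $\bigl\|\sup_N \bigl|\frac1N\sum_{n<N}T^nF_1\cdot T^{r_n}F_2\bigr|\bigr\|_{L^1}\lesssim \|F_1\|_\infty\|F_2\|_2$ together with a.e.\ convergence on a dense class. The dense class is handled by the Kronecker-factor reduction: write $F_1=\overline F_1+(F_1-\overline F_1)$; on the invariant part the average collapses to $\overline F_1\cdot\frac1N\sum_{n<N}T^{r_n}F_2$, which converges a.e.\ to $\overline F_1\,\overline F_2$ by the random pointwise ergodic theorem valid under \cref{eq:19} (here the $n^{1/2}/\log^{3+\delta}n$ rate is exactly the Bourgain–Wierdl threshold for a.e.\ convergence of one-dimensional random averages, cf.\ the discussion around \cref{eq:11,eq:12}); on the orthogonal complement $F_1-\overline F_1$ is a finite combination of eigenfunctions $f$ with $Tf=e^{2\pi i\theta}f$ plus a weakly-mixing remainder, so $T^nf = e^{2\pi i n\theta}f$ and the average becomes $f\cdot\frac1N\sum_{n<N}e^{2\pi i n\theta}T^{r_n}F_2$, an exponentially-twisted random average — for this one needs a.e.\ convergence to $0$ of $\frac1N\sum_{n<N}e^{2\pi i n\theta}T^{r_n}F_2$ uniformly in $\theta$, which again follows from a random maximal inequality with a square-function / Rademacher argument exploiting the independence of the $U_n$'s, the hypothesis \cref{eq:19} controlling the variance. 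The maximal inequality itself is proved by the by-now-standard oscillation/entropy method: bound the contribution of the random part $\sum_{n<N}(U_n-\mathbbm u_n)(\dots)$ by a martingale/Rademacher square function, and bound the deterministic part $\sum_{n<N}\mathbbm u_n T^n F_1\cdot T^nF_2$ by Bourgain's return-times or by the Hardy–Littlewood maximal function after the density-zero decoupling.

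The main obstacle, I expect, is precisely the twisted average $\frac1N\sum_{n<N}e^{2\pi i n\theta}T^{r_n}F_2(x)$ appearing from the eigenfunction part of $F_1$: one needs it to tend to $0$ almost everywhere \emph{simultaneously for all} $\theta$ in the (at most countable, since $F_1$ is a genuine function on a probability space, but a priori dense) point spectrum, and the twist $e^{2\pi i n\theta}$ destroys the positivity that makes one-dimensional random pointwise theorems easy. Controlling this requires a uniform-in-$\theta$ random maximal inequality; the natural route is a variance computation showing $\mathbb E_\omega\bigl|\frac1N\sum_{n<N}e^{2\pi i n\theta}U_n T^{n/\mathbbm u_n}F_2\bigr|^2$ is small after summing $\mathbbm u_n$, followed by a Menshov–Rademacher style gap argument along $N_k=2^k$ and an entropy estimate over a net of $\theta$'s — this is exactly where the logarithmic power $\log^{3+\delta}n$ in \cref{eq:19} gets consumed, giving the summability needed for Borel–Cantelli. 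If this uniform-in-$\theta$ estimate is in hand, everything else is routine transference and the Kronecker reduction, and the limit is identified as $\overline F_1\,\overline F_2$ by testing on the dense class.
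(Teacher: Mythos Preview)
Your reorganization—rewriting $\frac1N\sum_{n<N}T^nF_1\,T^{r_n}F_2$ as $\frac{1}{\sum_{m<M}U_m}\sum_{m<M}U_m\,T^{a_m}F_1\,T^mF_2$ with $a_m=U_1+\dots+U_m$, then splitting off the partial expectation (replace $U_m$ by $\mathbbm u_m$) from the fluctuation—matches the paper's opening moves. But from there you decompose $F_1$ through the Kronecker factor, and this is where your route diverges from the paper's and hits the obstacle you yourself flag: the twisted averages $\frac1N\sum_{n<N}e^{2\pi i n\theta}T^{r_n}F_2$ must vanish a.e.\ for a set of $\omega$ that is independent of $\theta$ and of the dynamical system. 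Your ``entropy over a net of $\theta$'s plus Menshov--Rademacher'' sketch does not resolve this; you also leave the weak-mixing component of $F_1$ essentially untreated (neither return-times nor the Hardy--Littlewood maximal function controls the bilinear form $T^nF_1\cdot T^{r_n}F_2$). So the proposal has a genuine gap at exactly the point you identify as the main obstacle.

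The paper sidesteps the entire Kronecker reduction by splitting on $F_2$ rather than $F_1$. For the partial-expectation term: if $F_2$ is $T$-invariant, the sum collapses (via the identity $\sum_{m<M} U_m T^{a_m}F_1=\sum_{k\le K} T^kF_1$) to the ordinary ergodic average of $F_1$, plus a single-function fluctuation handled like the main one below; if $F_2$ is a coboundary $TG-G$ (a dense class in the orthocomplement of the invariants), convergence to $0$ follows from an elementary deterministic lemma (\Cref{lem:2}): if $R$ has zero density and $(g_m)$ has averages over $[M,M+L)$ tending to $0$ uniformly in $M$, then $\frac1N\sum_{m<N}f_{a_m}g_m\to0$ for every bounded sequence $(f_k)$. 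The proof uses only that the complement of $R$ contains arbitrarily long intervals on which $a_m$ is constant—no spectral theory, no uniformity in $\theta$. For the fluctuation term $\sum_m(U_m-\mathbbm u_m)T^{a_m}F_1\,T^mF_2$, the paper does not use a martingale or Rademacher square function; it applies the same Bernstein-plus-Chinese-remainder covering technique that drives \Cref{bigthm:1,bigthm:2}: transfer to finite correlations, thin the supports of the two transferred functions to coprime residue classes $\pmod{q_1}$, $\pmod{q_2}$ with $q_1q_2=Q$, and optimize to obtain the requirement $N^{1/2}\le c\,\epsilon_N^3\sum_{m<N}\mathbbm u_m$. The cube on $\epsilon_N\sim\log^{-(1+\delta)}N$ is exactly what produces the $\log^{3+\delta}$ in hypothesis \cref{eq:19}.
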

\begin{rem} It will be clear from the proof that if we weaken the
  assumption in \cref{eq:19} to
  \begin{equation}
    \label{eq:22}
    \mathbbm u_n\cdot n^{1/2}\to \infty,
  \end{equation}
  we can still conclude mean convergence, and we have recurrence
  (intersectivity). What we mean by intersectivity in this context is
  that if the set of positive integers $A$ has positive upper density,
  then for some $n$ and $a$, the three numbers $a,a+n,a+r_n$ belong to
  $A$.  In fact, the lower density of such $n$'s is at least
  $\paren*{\overline d(A)}^3$
  \begin{equation}
    \label{eq:23}
    \liminf_{N\to\infty}\frac1N\sum_{n<N}\overline d\paren*{A\cap
      (A-n)\cap (A-r_n)}\ge \paren*{\overline d(A)}^3.
  \end{equation}

\end{rem}
In \cite{Fra}, we proved \cref{eq:21} with the condition $\mathbbm
u_n=n^{-b}$ for some $b<1/14$, so with our theorem, we improve the
range of $b$ to $b<1/2$.

Note that the assumption in \cref{eq:20} is equivalent with the random
sequence having $0$ density.  This assumption is \emph{necessary} to
have the limit equal $ \overline F_1\overline F_2$. If we drop the
assumption that $\mathbbm u_n\to 0$, then we don't know if we have
convergence---such as in the most basic ``coin flipping'' case,
\begin{uproblem}
  Suppose $\mathbbm u_n=1/2$.

  Is it true that the random sequence
  $R^\omega=\cbrace{r_1<r_2<r_3<\dots}$ satisfies, in every dynamical
  system and for bounded functions $F_i$, that the averages
  \begin{equation}
    \label{eq:24}
    \frac1N\sum_{n<N}T^nF_1(x)T^{r_n}F_2(x)
  \end{equation}
  converge for a.e. $x\in X$?
\end{uproblem}
\begin{conj}
  \label{sec:semirandom-averages-1}
  Suppose the decreasing sequence $\mathbbm u_1\ge\mathbbm
  u_2\ge\dots$ of probabilities satisfy
  \begin{equation}
    \label{eq:25}
    \mathbbm u_n\cdot \frac{n}{\paren*{\log\log n}^{1+\delta}}\to
    \infty\text{ for some } \delta>0,
  \end{equation}
  and
  \begin{equation}
    \label{eq:26}
    \mathbbm u_n\to 0.
  \end{equation}

  Then the random sequence $R^\omega=\cbrace{r_1<r_2<r_3<\dots}$
  satisfies, in every dynamical system, for bounded functions $F_i$,
  that
  \begin{equation}
    \label{eq:27}
    \lim_{N\to\infty} \frac1N\sum_{n<N}T^nF_1(x)T^{r_n}F_2(x)=
    \overline F_1 \overline F_2\text{ for a.e. } x\in X,
  \end{equation}
  where $\overline F_i$ is the projection of the function $F_i$ to the
  $T$-invariant functions.
\end{conj}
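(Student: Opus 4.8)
Since this statement is a conjecture, what follows is a proposed line of attack rather than a complete proof; I indicate at the end the step I expect to be the real obstacle.

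\textbf{Reduction to a cross term.} By the ergodic decomposition it suffices to treat an ergodic $T$, so the invariant functions are the constants and $\overline F_i=\int F_i\,d\mu=:c_i$. Writing $F_i=c_i+G_i$ with $\int G_i\,d\mu=0$ and expanding,
\[
\frac1N\sum_{n<N}T^nF_1\cdot T^{r_n}F_2=c_1c_2+c_1\cdot\frac1N\sum_{n<N}T^{r_n}G_2+c_2\cdot\frac1N\sum_{n<N}T^nG_1+\frac1N\sum_{n<N}T^nG_1\cdot T^{r_n}G_2 .
\]
The first term is the desired limit; the third tends to $c_2\overline{G_1}=0$ a.e. by Birkhoff's pointwise ergodic theorem; the second tends to $c_1\overline{G_2}=0$ a.e. once one has a \emph{pointwise} random ergodic theorem for $R^\omega$ valid under \cref{eq:25} --- which I would prove by a second-moment bound for $\frac1M\sum_{m<M}(U_m-\mathbbm u_m)T^mG_2$ along a lacunary subsequence $M_k$, a maximal inequality for the $\mathbbm u$-weighted averages obtained by Calder\'on transfer to $\mathbbm Z$ (using only the counting function, as in \Cref{rem:1}), and Borel--Cantelli; it is precisely here that a doubly logarithmic loss is cheap, so that $(\log\log n)^{1+\delta}$ appears. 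The assumption $\mathbbm u_n\to 0$ also enters crucially: it forces $r_n/n\to\infty$, so the exponents $n$ and $r_n$ live on genuinely different scales and cannot resonate. Everything thus reduces to proving $\frac1N\sum_{n<N}T^nG_1(x)\,T^{r_n}G_2(x)\to 0$ for a.e. $x$ and a.e. $\omega$.

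\textbf{The cross term.} Split $G_2$ along the Kronecker factor of $(X,\mu,T)$. If $G_2$ is a $T$-eigenfunction, $TG_2=e^{2\pi i\theta}G_2$ with necessarily $\theta\ne 0$, the cross term equals $G_2(x)\cdot\frac1N\sum_{n<N}e^{2\pi i\theta r_n}T^nG_1(x)$, and I would establish the \emph{randomly modulated pointwise ergodic theorem} that $\frac1N\sum_{n<N}e^{2\pi i\theta r_n}T^nG(x)\to 0$ a.e., uniformly in $\theta$, whenever $\overline G=0$: on the span of eigenfunctions this follows from the second-moment estimate $\mathbbm E\,\bigl|\frac1N\sum_{n<N}e^{2\pi i(\theta r_n+\beta n)}\bigr|^2=O(1/N)$ (the off-diagonal contributions $\mathbbm E[e^{2\pi i\theta(r_n-r_m)}]$ decay geometrically in $|n-m|$, since $r_n-r_m$ is a sum of $|n-m|$ essentially independent gaps, except for the $\theta$ lying within $C\,\mathbbm u_{r_n}$ of an integer, which are disposed of by comparison with a $\mathbbm u$-weighted deterministic average), combined with Borel--Cantelli along a lacunary $N_k$ and a Wiener--Wintner-type maximal bound again obtained by transfer; on the orthogonal complement of the eigenfunctions a van der Corput step suffices. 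If instead $G_2$ is orthogonal to the Kronecker factor, a bilinear van der Corput step reduces the cross term, for each shift $h$, to averages built from $G_1\cdot\overline{T^hG_1}$ and the shifted copies $T^{r_n-n}\bigl(G_2\cdot\overline{T^{r_{n+h}-r_n}G_2}\bigr)$; since for fixed $h$ the $h$-step gaps $r_{n+h}-r_n$ equidistribute almost surely, orthogonality of $G_2$ to the Kronecker factor makes the inner correlations small on average in $h$, and a maximal inequality for these shifted random averages upgrades the bound to a.e. convergence.

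\textbf{The main obstacle.} The genuine difficulty, and presumably why this is still open, is to obtain the maximal, square-function and oscillation inequalities for the random averages $\frac1N\sum_{n<N}(\,\cdot\,)\,T^{r_n}(\,\cdot\,)$ and for the modulated and shifted variants of these averages arising above, with constants uniform in the auxiliary parameters ($\theta$, the shift $h$, the window) and \emph{strong enough to survive Borel--Cantelli under the very weak sparsity allowed by \cref{eq:25}}. When $\mathbbm u_n$ is only of size $(\log\log n)^{1+\delta}/n$ the set $R^\omega$ is extremely thin --- its counting function grows roughly like $(\log\log N)^{1+\delta}\log N$ --- the gaps $r_{n+1}-r_n$ are enormous, and the transferred operators on $\mathbbm Z$ move along a very sparse random set, so bounding their maximal functions in $L^2(\mathbbm Z)$ is exactly the estimate that the present method reaches only down to the rate $n^{1/2}/\log^{3+\delta}n$ of \Cref{bigthm:3}, not yet to $n/(\log\log n)^{1+\delta}$. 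Closing the gap seems to require either a sharper square-function or variation estimate for thin random sets, or replacing the pointwise maximal step in the cross term by a more robust entropy or variational bound depending only on the counting function of $R^\omega$, in the spirit of \Cref{rem:1}, hence insensitive to how sparse the random set is.
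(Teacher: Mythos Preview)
This statement is labeled a \emph{conjecture} in the paper and is not proved there. The paper's only comment on it is the sentence following the statement: the special case where $F_1$ is $T$-invariant is already known (this is Bourgain's random pointwise ergodic theorem, cited as \cite{MR937581}), and even in that case the $(\log\log n)^{1+\delta}$ speed is necessary by \cite{MR1721622}. There is no proof in the paper to compare your proposal against.

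Your outline is a reasonable plan of attack and is consistent with what the paper records. Your decomposition $F_i=c_i+G_i$ isolates exactly the known case: when $F_1$ is $T$-invariant one has $G_1=0$, the cross term disappears, and only the term $c_1\cdot\frac1N\sum_{n<N}T^{r_n}G_2$ remains, which is precisely Bourgain's theorem under the hypothesis \cref{eq:25} (see \cref{eq:138} in the paper). So your reduction correctly identifies the cross term $\frac1N\sum_{n<N}T^nG_1\cdot T^{r_n}G_2$ as the entire content of the conjecture beyond what is known.

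Your assessment of the obstacle is also accurate and matches the paper's perspective: the authors' own method, based on counting the $\epsilon$-covering number of the relevant class of correlation sequences and applying Bernstein plus a union bound, reaches only the threshold $n^{1/2}/\log^{3+\delta}n$ of \Cref{bigthm:3}, and they explicitly state (end of \Cref{sec:gener-cont-proof}) that improving the covering-number estimate to polynomial in $N$ is what they believe underlies the conjecture. Your proposed route through a Kronecker decomposition of $G_2$, randomly modulated Wiener--Wintner averages, and a bilinear van der Corput step is a plausible alternative framework, but the pointwise maximal and variational bounds you would need for the modulated and shifted random averages---uniform in $\theta$ and in the shift $h$, and robust enough for sets with counting function as small as $(\log\log N)^{1+\delta}\log N$---are not available and constitute essentially the same barrier. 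In short: your diagnosis of where the difficulty lies is correct, and the paper offers no further progress on it.
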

If $F_1$ is $T$-invariant, then the conjecture is true.  See
\cite{MR937581}. Even in this case, a speed of a power of $\log\log n$
is necessary as shown in \cite{MR1721622}.

\section{General ideas behind the proofs}
\label{sec:gener-cont-proof}

Our proof of \Cref{bigthm:1} will be presented in an elementary way,
but here we explain how the idea of the proof came about because we
believe this is a good general context to search for further
improvements.

In case of $1$- and $2$-intersectivity, the main tasks to prove are
\begin{align}
  \label{eq:28}
  \lim_{N\to\infty} \sup_{A} \abs*{\frac1{\sum_{n<N}\mathbbm
      u_n}\sum_{n< N} (U_n-\mathbbm u_n)\cdot \overline
    d\paren[\Big]{A\cap (A-n)}}&= 0
  \quad\text{with probability $1$},\\
  \intertext{and} \lim_{N\to\infty} \sup_{A}
  \abs*{\frac1{\sum_{n<N}\mathbbm u_n}\sum_{n< N} (U_n-\mathbbm
    u_n)\cdot \overline d\paren[\Big]{A\cap (A-n)\cap (A-2n)}}&= 0
  \quad\text{with probability $1$},
\end{align}
respectively, where in $\sup_A$ we take the supremum over all subsets
of the positive integers.  For $1$- and $2$-averaging the main tasks
are
\begin{align}
  \label{eq:29}
  \lim_{N\to\infty}\sup_{(X,T,\mu),F}\int_X\abs*{\frac1{\sum_{n<N}\mathbbm
      u_n}\sum_{n<N}(U_n-\mathbbm u_n) T^nF}&=0
  \text{ with probability 1, }\\
  \intertext{and}
  \lim_{N\to\infty}\sup_{(X,T,\mu),F_i}\int_X\abs*{\frac1{\sum_{n<N}\mathbbm
      u_n}\sum_{n<N}(U_n-\mathbbm u_n) T^nF_1\cdot T^{2n}F_2}&=0
  \text{ with probability 1, }
\end{align}
respectively, where in $\sup_{(X,T,\mu),F_i}$ we take the supremum
over all dynamical systems $(X,T,\mu)$ and indicators $F_i$.  After a
duality consideration, all these cases are proved via estimates of the
form
\begin{equation}
  \label{eq:30}
  P\,\paren*{\, \sup_{\paren*{a_1,a_2,a_3,\dots,a_{N-1}}\in\mathcal{A}}\abs*{\frac1{\sum_{n<N}\mathbbm u_n}\sum_{n< N}
      (U_n-\mathbbm u_n)\cdot a_n}>\epsilon }<
  \exp{\paren*{-c\epsilon^2\sum_{n<N}\mathbbm u_n }},
\end{equation}
where in $\sup_{\paren*{a_1,a_2,a_3,\dots,a_{N-1}}\in\mathcal{A}}$ we
take the the supremum over a certain class $\mathcal{A}$ of vectors
$\paren*{a_1,a_2,a_3,\dots,a_{N-1}}$ of real numbers with
$|a_n|\le1$. For example, in case of $2$-averaging, we have
\begin{equation}
  \label{eq:31}
  a_n=  \int_XG\cdot T^nF_1\cdot T^{2n}F_2
\end{equation}
and the supremum is taken over all dynamical systems $(X,T,\mu)$,
indicators $F_i$ and $\pm1$-valued functions $G$. The inequality in
\cref{eq:30} for a \emph{fixed} vector
$\paren*{a_1,a_2,a_3,\dots,a_{N-1}}$ with $|a_n|\le1$ follows from
Bernstein's exponential inequality, \Cref{lem:1} below, so the
difficulty is to handle the supremum. Now, for estimating the supremum
of a class of random variables $Y_k$, $k\in \mathcal{K}$, the best
estimate we have is the union estimate,
\begin{equation}
  \label{eq:32}
  P\,\paren*{ \sup_{k\in \mathcal{K}}Y_k > \epsilon}\le
  \abs*{\mathcal{K}}\cdot \sup_{k\in \mathcal{K}} P\,\paren*{ Y_k > \epsilon}.
\end{equation}
In our case, $\mathcal{K}$ would be the class of vectors
$\mathcal{A}=\paren*{a_1,a_2,a_3,\dots,a_{N-1}}$. This requires the reduction of
the number of vectors $\paren*{a_1,a_2,a_3,\dots,a_{N-1}}$ in the
supremum: we need to be able to find a subclass $\mathcal{A}_0$ of
$\mathcal{A}$ which has the following properties
\begin{description}
\item[$\mathcal{A}_0$ approximates $\mathcal{A}$ within $\epsilon$]
  for any $\paren*{a_1,a_2,a_3,\dots,a_{N-1}}\in \mathcal{A}$ there is
  $\paren*{a_1',a_2',a_3',\dots,a_{N-1}'}\in \mathcal{A}_0$ so that
  $|a_n-a_n'|<\epsilon$, $n=1,2,3,\dots, N$.
\item[The cardinality of $\mathcal{A}_0$ is
  $\exp{\paren*{o\paren*{\sum_{n<N}\mathbbm u_n }}}$ ] More precisely,
  for every $\epsilon$, we need to have $\log
  {\paren*{|\mathcal{A}_0|}}=O\paren*{ \epsilon^2\sum_{n<N}\mathbbm
    u_n}$.
\end{description}
In other words, the existence of $\mathcal{A}_0$ with the above
properties means that the $\epsilon$-covering number of $\mathcal{A}$
in the $\ell^\infty$ metric needs to be $\exp{\paren[\bigg]{O\paren*{
      \epsilon^2\sum_{n<N}\mathbbm u_n}}}$.  In both the intersective
and averaging case, $\mathcal{A}$ is uncountably infinite, but via
appropriate (and more or less standard) transference arguments, we
will show that in both cases $\mathcal{A}$ has finite
$\epsilon$-covering numbers.  In case of $1$-intersectivity and
$1$-averaging, the set $\mathcal{A}$ contains vectors
$\paren*{a_1,a_2,a_3,\dots,a_{N-1}}$ given by single correlation, that
is, by convolution,
\begin{equation}
  \label{eq:33}
  a_n=\frac1Q\sum_{a<Q}g(a)f(a+n),
\end{equation}
where $Q<N$, and $f$ and $g$ are two-valued functions defined on the
interval $[0,2N]$.  In this case, Parseval's formula easily yields
that the $\epsilon$-covering number is $O(N)=
\exp{\paren[\Big]{O\paren*{\log N}}}$ which implies that
$\sum_{n<N}\mathbbm u_n $ only needs to satisfy
\begin{equation}
  \label{eq:34}
  \frac{\sum_{n<N}\mathbbm u_n}{\log N}\to\infty.
\end{equation}
But in case of $2$-intersectivity and $2$-averaging, the structure of
$a_n$ is more complicated: the set $\mathcal{A}$ contains vectors
$\paren*{a_1,a_2,a_3,\dots,a_{N-1}}$ given by order $2$-correlation
\begin{equation}
  \label{eq:35}
  a_n=\frac1Q\sum_{a<Q}g(a)f_1(a+n)f_2(a+2n).
\end{equation}
The novelty of our paper is that even in this case we manage to show
that the $\epsilon$-covering number is better than the trivial $
\exp{\paren[\Big]{O\paren*{N}}}$ estimate: we show that for order
$\ell$ correlation sequences, the $\epsilon$-covering number is
$\exp{\paren[\bigg]{O\paren*{N^{1-1/(\ell +1}}}}$, which yields the
condition
\begin{equation}
  \label{eq:36}
  \frac{\sum_{n<N}\mathbbm u_n}{N^{1-1/(\ell +1)}}\to\infty,
\end{equation}
appearing in both \Cref{bigthm:1,bigthm:2}. We achieve this by
restricting the functions $f_i$ and $g$ to thin, \emph{independent}
subsets each of a certain appropriate density.  The independence of
these support sets results in restricting the order $\ell$ correlation
sequences to the \emph{product} of these densities.  The fact that we
have $\ell+1$ factors in an order $\ell$ correlation sequence explains
the appearance of $\ell+1$ in the condition \cref{eq:36}. The origin
of the idea of thinning via independent sets is coming from
\cite{2010arXiv1011.4310C}.  We later noticed that instead of using
independent random variables, we can simply use independent residue
classes and the Chinese remainder theorem, and this less sophisticated
method yields more accurate results.

We do believe, though, that even for order $\ell >1$ correlation
sequences, the $\epsilon$-covering numbers are not greater than some
power of $N$.  Hence the $\epsilon$-covering number is always
$\exp{\paren[\Big]{O\paren*{\log N}}}$, which would give the condition
\cref{eq:34}.  This belief is behind
\Cref{sec:introduction-history-1,sec:conv-ergod-aver-1}.

We close this section by stating that finding optimal
$\epsilon$-covering numbers is behind many convergence theorems.  The
general techniques of maximal inequalities, convexity methods
(cf. \Cref{sec:convexity-methods}), Fourier transform, spectral
theorem, uniform boundedness and transference principles can be viewed
as tools to help us reduce the original class of functions for which
we need to check convergence.

\section{Proof of \Cref{bigthm:1}, the random intersectivity theorem}
\label{sec:proof-crefbigthm:1}

The proof is completely elementary, and, unlike earlier methods, the
$\ell=2$ case contains all the ingredients of the proof. Hence, we
give the detailed proof for $\ell=2$, and only indicate the minor
adjustments to be made for $\ell \ge3$.

\subsection{The $\ell = 2$ case}
\label{sec:ell-=-1-1}

As we indicated, we shall work with the weaker assumption in
\cref{eq:8} which in this case is
\begin{equation}
  \label{eq:37}
  \frac{\sum_{n<N}\mathbbm u_n}{N^{2/3}}\to \infty,
\end{equation}
and we shall prove a little more than required: we shall prove that the
set of differences $r\in R^\omega$ for which $\overline
d\paren[\big]{A\cap(A-r)\cap(A-2r)}>0$ is of positive \emph{lower} density in
$R^\omega$,
\begin{equation}
  \label{eq:38}
  \liminf_{N\to\infty} \frac1{\sum_{n< N} U_n}\sum_{n< N} U_n\cdot \overline d\paren[\Big]{A\cap (A-n)\cap (A-2n)}>0.
\end{equation}
By the strong law of large numbers, \Cref{thm:1},
\begin{equation}
  \label{eq:39}
  \frac1{\sum_{n<N}\mathbbm u_n}\sum_{n<N}U_n\to 1 \text{ with probability 1, }
\end{equation}
so \cref{eq:38} is equivalent with
\begin{equation}
  \label{eq:40}
  \liminf_{N\to\infty} \frac1{\sum_{n<N}\mathbbm u_n}\sum_{n< N} U_n\cdot \overline d\paren[\Big]{A\cap (A-n)\cap (A-2n)}>0.
\end{equation}
Let us consider the \emph{expectation} of the averages
$\frac1{\sum_{n<N}\mathbbm u_n}\sum_{n< N} U_n\cdot \overline
d\paren[\Big]{A\cap (A-n)\cap (A-2n)}$,
\begin{equation}
  \label{eq:41}
  \frac1{\sum_{n<N}\mathbbm u_n}\sum_{n< N} \mathbbm u_n\cdot \overline d\paren[\Big]{A\cap (A-n)\cap (A-2n)}.
\end{equation}
Using that the sequence $(\mathbbm u_n)$ is decreasing\footnote{This
  is the only place where we use that the $(\mathbbm u_n)$ is
  decreasing.}, summation by parts gives the estimate
\begin{equation}
  \label{eq:42}
  \liminf_{N\to\infty} \frac1{\sum_{n<N}\mathbbm u_n}\sum_{n< N} \mathbbm
  u_n\cdot \overline d\paren[\Big]{A\cap (A-n)\cap (A-2n)}\ge \liminf_N  \frac1{N}\sum_{n< N} \overline d\paren[\Big]{A\cap (A-n)\cap (A-2n)}.
\end{equation}
Now, Szemerédi's (actually, in this $\ell=2$ case, Roth's) result has
been strengthened by \cite{MR0498471} to the extent that for any $A$
(of positive upper density) the \emph{lower} density of the differences of
three term arithmetic progressions in $A$ is positive
\begin{equation}
  \label{eq:43}
  \liminf_{N\to\infty} \frac1{N}\sum_{n< N} \overline d\paren[\Big]{A\cap (A-n)\cap (A-2n)}>0. 
\end{equation}
In fact, we need a more precise version of this result: Examining
Furstenber's transfer principle in \cite{MR0498471}, it is clear that
for a fixed $A$ of positive upper density, there is a \emph{single}
sequence $\seq{M}{k}$ so that for every $n$ the limit
  \begin{equation}
    \label{eq:44}
    \rho\paren[\Big]{A\cap(A-n)\cap(A-2n)}=\lim_{k\to\infty}\frac{\abs[\Big]{\cbrace[\big]{a\mid a\in A\cap(A-n)\cap(A-2n), \,a<M_k}}}{M_k}
  \end{equation}
exists and satisfies
\begin{equation}
  \label{eq:45}
  \liminf_{N\to\infty} \frac1{N}\sum_{n< N} \rho\paren[\Big]{A\cap (A-n)\cap (A-2n)}>0.
\end{equation}
Those who seek a more elementary reason for this can use the method of
\cite{MR0106865} to establish \cref{eq:45} and hence \cref{eq:43}.

It is then enough to prove that the difference between the averages
along the random sequence and the expectation of the averages goes to
$0$
\begin{equation}
  \label{eq:46}
  \lim_{N\to\infty} \sup_{A} \abs*{\frac1{\sum_{n<N}\mathbbm u_n}\sum_{n<
      N} (U_n-\mathbbm u_n)\cdot \rho\paren[\Big]{A\cap (A-n)\cap (A-2n)}}= 0
  \quad\text{with probability $1$},
\end{equation}
where in $\sup_A$ we take the supremum over \emph{all} subsets of the
positive integers.  

By the assumption in \cref{eq:37}, we can choose a sequence
$\epsilon_1, \epsilon_2,\dots,\epsilon_N,\dots$ of positive numbers so that
\begin{equation}
  \label{eq:47}
  \epsilon_N\to 0\text{ and }  \frac{\epsilon_N^2\sum_{n<N}\mathbbm
    u_n}{N^{2/3}}>C,\text{ with some constant $C>0$ to
    be chosen later, for
    all }N>N(C).
\end{equation}
We will show that, for each $N>N(C)$, the $N$th average is greater
than $\epsilon_N$ with small probability
\begin{equation}
  \label{eq:48}
  P\,\paren*{ \sup_{A}\abs*{\frac1{\sum_{n<N}\mathbbm u_n}\sum_{n< N}
      (U_n-\mathbbm u_n)\cdot \rho\paren[\Big]{A\cap (A-n)\cap (A-2n)}}>\epsilon_N }<
  \exp{\paren*{-c\epsilon_N^2\sum_{n<N}\mathbbm u_n }}.
\end{equation}
This implies \cref{eq:46} since, by the choice of $\epsilon_N$ in
\cref{eq:47}, we have $\exp{\paren*{-c \epsilon_N^2\sum_{n<N}\mathbbm
    u_n }}<\exp{\paren*{-Cc N^{2/3}}}$ which is summable in $N$, hence
we can apply the Borel-Cantelli lemma.

Now, the difficulty in proving \cref{eq:48} is the requirement that we
need to have the estimate of the average \emph{simultaneously} for all
$A$.  Indeed, the estimate for a \emph{fixed} set $A$ instead of
$\sup_{A}$ follows readily from Bernstein's exponential inequality,
\cite[page 52]{MR0388499},
\begin{lem}[Bernstein's exponential inequality]
  \label{lem:1}
  Let $X_1,X_2,\dots,X_N$ be independent, mean zero random variables with
  $\abs*{X_n}\le K$.

  Then we have
  \begin{equation}
    \label{eq:49}
    P\,\paren*{\abs*{\sum_{n< N} X_n}\ge t}\le
    2\max\cbrace[\Bigg]{\exp\paren*{-\frac{t^2/4}{\sum_{n< N}\mathbbm EX_n^2}},\,\,\,\exp\paren[\Big]{-t/(2K)}} \quad
    \text{ for all }\quad  t>0.
  \end{equation}
\end{lem}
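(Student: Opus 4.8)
The plan is to run the classical exponential-moment (Chernoff) argument. Fix $\lambda>0$; by Markov's inequality applied to the nonnegative variable $e^{\lambda\sum_{n<N}X_n}$ together with independence,
\[
  P\paren*{\sum_{n<N}X_n\ge t}\le e^{-\lambda t}\,\mathbbm E\,e^{\lambda\sum_{n<N}X_n}=e^{-\lambda t}\prod_{n<N}\mathbbm E\,e^{\lambda X_n}.
\]
The one computation that matters is the bound on a single factor: expanding the exponential in a power series, the hypothesis $\mathbbm E X_n=0$ annihilates the linear term and $\abs*{X_n}\le K$ lets us dominate the $k$-th moment by $\mathbbm E X_n^2\cdot K^{k-2}$ for $k\ge2$, which after resumming gives
\[
  \mathbbm E\,e^{\lambda X_n}\le 1+\frac{\mathbbm E X_n^2}{K^2}\paren*{e^{\lambda K}-1-\lambda K}\le\exp\paren*{\frac{\mathbbm E X_n^2}{K^2}\paren*{e^{\lambda K}-1-\lambda K}}.
\]
Multiplying over $n$ and writing $\sigma^2:=\sum_{n<N}\mathbbm E X_n^2$, this yields $P\paren*{\sum_{n<N}X_n\ge t}\le\exp\paren*{-\lambda t+\tfrac{\sigma^2}{K^2}(e^{\lambda K}-1-\lambda K)}$ for every $\lambda>0$.

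Next I would optimize, but only over the range $0\le\lambda\le1/K$: there the elementary inequality $e^x-1-x\le x^2$ for $x\in[0,1]$ replaces the exponent by the quadratic $-\lambda t+\sigma^2\lambda^2$. Choosing $\lambda=\min\cbrace*{t/(2\sigma^2),\,1/K}$ then produces exactly the two regimes of the statement. If $t\le2\sigma^2/K$, the free minimizer $\lambda=t/(2\sigma^2)$ is admissible and the exponent is at most $-t^2/(4\sigma^2)$; if $t>2\sigma^2/K$, we are forced to $\lambda=1/K$, and since in that case $\sigma^2/K^2<t/(2K)$, the exponent is at most $-t/K+t/(2K)=-t/(2K)$. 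Hence $P\paren*{\sum_{n<N}X_n\ge t}\le\max\cbrace*{\exp(-t^2/(4\sigma^2)),\,\exp(-t/(2K))}$.

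Finally, the same argument applied to $-X_1,\dots,-X_N$ — which are again independent, mean zero, bounded by $K$, and have the same second moments — bounds $P\paren*{\sum_{n<N}X_n\le-t}$ by the identical quantity, so a union bound over the two one-sided events gives the factor $2$ and the claimed inequality (the degenerate case $\sigma=0$, where all $X_n$ vanish almost surely, is trivial). I do not expect a genuine obstacle here; the only mild bookkeeping is the two-regime optimization, and the point worth flagging is that restricting to $\lambda\le1/K$ — which is forced because $\mathbbm E e^{\lambda X_n}$ is controlled only through $e^{\lambda K}$ — is precisely what makes the linear tail $\exp(-t/(2K))$ appear once $t$ is large relative to $\sigma^2/K$.
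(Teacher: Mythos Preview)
Your argument is correct: the Chernoff method with the moment bound $\abs{\mathbbm E X_n^k}\le K^{k-2}\,\mathbbm E X_n^2$, the quadratic upper bound $e^x-1-x\le x^2$ on $[0,1]$, and the two-regime choice $\lambda=\min\cbrace*{t/(2\sigma^2),\,1/K}$ give exactly the stated inequality, and the symmetrization plus union bound handle the two-sided version.

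There is nothing to compare against, however: the paper does not prove this lemma but simply quotes it as Bernstein's exponential inequality with a reference to the literature (\cite[page 52]{MR0388499}). Your write-up supplies the standard proof that the paper omits.
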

\begin{cor}
  \label{cor:1}
  Let $\cbrace*{a_1,a_2,a_3,\dots,a_n,\dots}$ be a sequence of real or
  complex numbers with $|a_n|\le 1$.

  Then for any $\epsilon$, $0\le\epsilon<1$, we have
  \begin{equation}
    \label{eq:50}
    P\,\paren*{\, \abs*{\frac1{\sum_{n<N}\mathbbm u_n}\sum_{n< N}
        (U_n-\mathbbm u_n)\cdot a_n}>\epsilon }<
    \exp{\paren*{-\frac14\epsilon^2\sum_{n<N}\mathbbm u_n }}.
  \end{equation}
\end{cor}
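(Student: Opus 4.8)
The plan is to deduce the estimate directly from Bernstein's inequality (\Cref{lem:1}), applied to the independent random variables
\[
  X_n \;=\; (U_n-\mathbbm u_n)\,a_n .
\]
The three hypotheses of \Cref{lem:1} are immediate: the $X_n$ are independent because the $U_n$ are; each $X_n$ has mean zero since $\mathbbm E U_n=\mathbbm u_n$; and, since $U_n\in\{0,1\}$ and $\mathbbm u_n\in[0,1]$ force $|U_n-\mathbbm u_n|\le 1$, we have $|X_n|\le|a_n|\le 1$, so we may take $K=1$.

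The only quantity in \Cref{lem:1} that needs estimating is the sum of variances, and here $\mathbbm E(U_n-\mathbbm u_n)^2=\mathbbm u_n(1-\mathbbm u_n)\le\mathbbm u_n$ together with $|a_n|\le 1$ gives
\[
  \sum_{n<N}\mathbbm E X_n^2 \;=\; \sum_{n<N}|a_n|^2\,\mathbbm u_n(1-\mathbbm u_n)\;\le\;\sum_{n<N}\mathbbm u_n .
\]
Now apply \Cref{lem:1} with $t=\epsilon\sum_{n<N}\mathbbm u_n$: substituting this variance bound and $K=1$, the right-hand side is at most $2\max\bigl\{\exp(-\tfrac14\epsilon^2\sum_{n<N}\mathbbm u_n),\,\exp(-\tfrac12\epsilon\sum_{n<N}\mathbbm u_n)\bigr\}$, and because $0\le\epsilon<1$ we have $\tfrac12\epsilon\ge\tfrac14\epsilon^2$, so the first alternative is the larger of the two. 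Dividing the event $\{\,|\sum_{n<N}X_n|>t\,\}$ through by $\sum_{n<N}\mathbbm u_n$ identifies it with the event in the statement, which settles the real case; the factor $2$ produced by the two-sided form of Bernstein is harmless and gets absorbed into the constant (it only matters when $\sum_{n<N}\mathbbm u_n$ is bounded, whereas the corollary is used with $\sum_{n<N}\mathbbm u_n\to\infty$). For complex $a_n=b_n+ic_n$, with $b_n^2+c_n^2\le1$, I would run the same argument on the real sequences $(b_n)$ and $(c_n)$ separately and recombine via $|z|^2=(\mathrm{Re}\,z)^2+(\mathrm{Im}\,z)^2$; equivalently, the Chernoff-bound proof of \Cref{lem:1} applies verbatim once one observes $|\mathbbm E e^{\lambda\sum_n X_n}|\le\mathbbm E e^{\lambda\sum_n \mathrm{Re}\,X_n}$.

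I do not anticipate a genuine obstacle. The whole content is the two bookkeeping facts above: that the variance sum is controlled by $\sum_{n<N}\mathbbm u_n$ rather than by the number $N$ of summands (this is what eventually produces the correct hypothesis on the $\mathbbm u_n$), and that in the range $0\le\epsilon<1$ it is the sub-Gaussian term of Bernstein's maximum that dominates. The only place to be mildly attentive is not to discard the factor $1-\mathbbm u_n$ in a way that would spoil the variance estimate — but since only an upper bound is needed, $\mathbbm u_n(1-\mathbbm u_n)\le\mathbbm u_n$ is exactly right.
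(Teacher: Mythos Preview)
Your proposal is correct and follows exactly the same route as the paper: set $X_n=(U_n-\mathbbm u_n)a_n$, bound $\mathbbm E X_n^2\le\mathbbm u_n$, and apply \Cref{lem:1} with $t=\epsilon\sum_{n<N}\mathbbm u_n$. You in fact give more detail than the paper does---the paper's proof is three lines and does not explicitly discuss which branch of the maximum dominates, the factor of $2$, or the complex case, all of which you handle correctly.
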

\begin{proof}[Proof of \Cref{cor:1}]
  Set $X_n=(U_n-\mathbbm u_n)\cdot a_n$.  Then $\abs*{X_n}\le 1$ and
  \begin{align}
    \label{eq:51}
    \mathbb EX_n^2&\le \mathbb E(U_n-\mathbbm u_n)^2\\
    &\le \mathbbm u_n.
  \end{align}
  Now use \Cref{lem:1} with this $X_n$ and $t=\epsilon\cdot
  \sum_{n<N}\mathbbm u_n$.
\end{proof}
Using this corollary with $a_n=\rho\paren[\Big]{A\cap (A-n)\cap (A-2n)}$
and $\epsilon=\epsilon_N$, we readily get
\begin{equation}
  \label{eq:52}
  P\,\paren*{ \abs*{\frac1{\sum_{n<N}\mathbbm u_n}\sum_{n< N}
      (U_n-\mathbbm u_n)\cdot \rho\paren[\Big]{A\cap (A-n)\cap (A-2n)}}>\epsilon_N }<
  \exp{\paren*{-\frac14\epsilon_N^2\sum_{n<N}\mathbbm u_n }}.
\end{equation}
How can we get almost the same estimate when, instead a fixed $A$,
we take $\sup_{A}$? The best general estimate we have for the
supremum of a class of random variables $Y_k$, $k\in \mathcal{K}$, is
what is known as the union estimate
\begin{equation}
  \label{eq:53}
  P\,\paren*{ \sup_{k\in \mathcal{K}}Y_k > \epsilon}\le
  \abs*{\mathcal{K}}\cdot \sup_{k\in \mathcal{K}} P\,\paren*{ Y_k > \epsilon}.
\end{equation}
In view of the above, since in our case we would take $\mathcal{K}$ to
be a set of $A$'s, our task is to reduce the number of sets $A$
we need to consider in $\sup_{A}$.  Our first step is to show that
we can take \emph{finitely many} $A$. In calculating $\rho\paren[\Big]{A\cap (A-n)\cap (A-2n)}$, we are taking the limit of $\frac1{M_k}
\abs[\Big]{\cbrace[\big]{a\mid a\in A\cap (A-n)\cap (A-2n), \, a<M_k}}$ as $k\to\infty$. Our
formulas become simpler if we use the indicator $\mathbbm
1_A$ of $A$.  We can then write
\begin{equation}
  \label{eq:54}
  \frac1{M_k} \abs[\Big]{\cbrace[\big]{a\mid a\in A\cap (A-n)\cap(A-2n), \, a<M_k}}=\frac1{M_k}
  \sum_{a<M_k} \mathbbm 1_A(a)\mathbbm 1_A(a+n)\mathbbm 1_A(a+2n).
\end{equation}
Let us divide up the interval $[1,M_k)$ of summation into intervals of
length $Q$, where we'll see that the best choice for the positive
integer $Q$ will be a constant multiple of $N^{2/3}$,
\begin{equation}
  \label{eq:55}
  \begin{multlined}
    \frac1{M_k} \sum_{a<M_k} \mathbbm 1_A(a)\mathbbm 1_A(a+n)\mathbbm
    1_A(a+2n) \\
    = \frac1{M_k/Q}\sum_{i<M_k/Q}\frac1{Q}\sum_{iQ\le
      a<(i+1)Q} \mathbbm 1_A(a)\mathbbm 1_A(a+n)\mathbbm
    1_A(a+2n)+O\paren[\Big]{Q/M_k}.
  \end{multlined}
\end{equation}
It follows, since $M_k\to\infty$ hence the error term
$O\paren[\Big]{Q/M_k}$ goes to $0$, that
\begin{equation}
  \label{eq:56}
  \begin{multlined}
     \abs*{\frac1{\sum_{n<N}\mathbbm u_n}\sum_{n< N}
    (U_n-\mathbbm u_n)\cdot \rho\paren[\Big]{A\cap (A+n)\cap (A+2n)}}\\
  \le \sup_j \abs*{\frac1{\sum_{n<N}\mathbbm u_n} \sum_{n<
      N}(U_n-\mathbbm u_n)\cdot \frac1{Q}\sum_{jQ\le a<(j+1)Q}
    \mathbbm 1_A(a)\mathbbm 1_A(a+n)\mathbbm 1_A(a+2n)}.
  \end{multlined}
\end{equation}
Now, instead of taking the supremum $\sup_j$ over varying intervals of
length $Q$, we can take the \emph{fixed} interval $[0,Q)$, and take
the supremum over indicators over the intervals $[0,Q)$, $[0,Q+N)$ and
$[0,Q+2N)$. For this, simply change variable $b=a-jQ$, and define
\begin{equation}
\begin{aligned}
  f_0(b)&=\mathbbm 1_A(jQ+b)\mathbbm 1_{[0,Q)}(b), \\
  f_1(b)&=\mathbbm 1_A(jQ+b)\mathbbm
  1_{[0,Q+N)}(b),\\
  f_2(b)&=\mathbbm 1_A(jQ+b)\mathbbm 1_{[0,Q+2N)}(b).  
\end{aligned}\label{eq:57}
\end{equation}

It follows that
\begin{equation}
\begin{multlined}
  \sup_j \abs*{\frac1{\sum_{n<N}\mathbbm u_n} \sum_{n< N}(U_n-\mathbbm
    u_n)\cdot \frac1Q\sum_{jQ\le
      a<(j+1)Q} \mathbbm 1_A(a)\mathbbm 1_A(a+n)\mathbbm 1_A(a+2n)}\\
  \le \sup_{f_i\in \mathcal{F}_i} 
  \abs*{\frac1{\sum_{n<N}\mathbbm u_n} \sum_{n< N}(U_n-\mathbbm
    u_n)\cdot \frac1Q\sum_{b<Q} f_0(b)f_1(b+n)f_2(b+2n)},
\end{multlined}
\end{equation}

where $\mathcal{F}_i=\mathcal{F}_i(N)$ is the family of indicators
supported on the interval $[0,Q+iN)$ for $i=0,1,2$,
\begin{equation}
\label{eq:58}
  \mathcal{F}_i=\cbrace[\Big]{f_i\mid f_i:[0,Q+iN)\to
    \cbrace*{0,1}}, i=0,1,2.
\end{equation}
We've reduced the inequality in \cref{eq:48} to
\begin{equation}
  \label{eq:59}
  P\,\paren*{ \sup_{f_i\in \mathcal{F}_i}
    \abs*{\frac1{\sum_{n<N}\mathbbm u_n}
      \sum_{n< N}(U_n-\mathbbm u_n)\cdot \frac1{Q}\sum_{a<Q} f_0(a)f_1(a+n)f_2(a+2n)}>\epsilon_N }<
  \exp{\paren*{-c\epsilon_N^2\sum_{n<N}\mathbbm u_n }}.
\end{equation}
The important thing is that in $ \sup_{f_i\in \mathcal{F}_i}$,
$i=0,1,2$, we take the supremum over \emph{finite} sets.  The
cardinality of $\mathcal{F}_0$ is $2^{Q}$ and at this point it is
still at our disposal, so it is of no concern right now except we note
that $Q<N$.  But we do have a problem with the cardinalities of
$\mathcal{F}_i$, $i=1,2$: their cardinalities is at least
$2^{N}=\exp{\paren*{cN}}$. Applying \Cref{cor:1} with
$a_n=\frac1Q\sum_{a<Q} f_0(a)f_1(a+n)f_2(a+2n)$, and using the union
estimate \cref{eq:53}, we only get
\begin{equation}
  \label{eq:60}
  P\,\paren*{ \sup_{f_i\in \mathcal{F}_i}
    \abs*{\frac1{\sum_{n<N}\mathbbm u_n}
      \sum_{n< N}(U_n-\mathbbm u_n)\cdot \frac1Q\sum_{a<Q}
      f_0(a)f_1(a+n)f_2(a+2n)}>\epsilon_N }\le \exp{\paren*{cN}}\cdot \exp{\paren*{-\frac14\epsilon_N^2\sum_{n<N}\mathbbm u_n }},
\end{equation}
which is not good enough since the best estimate we have is
$\epsilon_N^2\sum_{n<N}\mathbbm u_n>C N^{2/3}$.  To reduce the
cardinality of the $f_1,f_2$ further, we restrict their support, the
$[0,Q+N)$ and $[0,Q+2N)$ intervals, to ``independent'' arithmetic
progressions.  We choose $2$ moduli $q_{1},q_2$,
and we split the support of $f_i$, the interval $[0,Q+ iN)$,
according to the residue classes $\pmod{q_i}$.  This results in
splitting the supports of the products $f_1(a+n)f_{2}(a+2n)$ into $q_{1}q_{2}$ residue classes: Let
$\chi_i$ be the indicator of the multiples of $q_i$
\begin{equation}
  \label{eq:61}
  \chi_i(x)=
  \begin{dcases*}
    1& if $q_i\mid x$,\\
    0& otherwise.
  \end{dcases*}
\end{equation}
Since $\chi_i(x-r)$ is the indicator of the $r\pmod {q_i}$ residue
class, we can write
\begin{equation}
  \label{eq:62}
  \begin{multlined}
    \frac1{Q}\sum_{a<Q}
    f_0(a)f_1(a+n)f_{2}(a+2 n)\\
    =\frac{1}{Q}
    \sum_{a<Q}f_0(a)\sum_{r_{1}<q_{1}}\chi_{1}(a+n-r_1)f_{1}(a+n)
    \sum_{r_{2}<q_2} \chi_2(a+2n-r_2)f_2(a+2n)
    \\
    =\frac{1}{Q} \sum_{r_{1}<q_{1}}
    \sum_{r_{2}<q_2}\sum_{a<Q}f_0(a)\chi_{1}(a+n-r_1)f_{1}(a+n)
    \chi_2(a+2 n-r_2)f_2(a+2 n).
  \end{multlined}
\end{equation}
We want the $q_i$ to be as big as possible, but we don't want the
number of terms $q_{1}q_{2}$ in the sum
$\sum_{r_{1}<q_{1}}\sum_{r_{2}<q_{2}}$ to
be greater than $Q$, so we require
\begin{equation}
  \label{eq:63}
  q_{1}q_{2}\le Q.
\end{equation}
We also need to make sure
$\sum_{a<Q}f_0(a)\chi_{1}(a+n-r_1)f_{1}(a+n)\chi_2(a+2
n-r_2)f_2(a+2n)$ is bounded independently of $n$ for fixed
$r_i$.  We can achieve this if the equation
\begin{equation}
  \label{eq:64}
  \chi_{1}(a+n-r_1)\chi_{2}(a+2n-r_2)=1
\end{equation}
has, say, at most $1$ solution in $a\in [0,Q)$.  We can rewrite
\cref{eq:64} as simultaneous congruences
\begin{equation}\label{eq:65}
  \begin{aligned}   
    a&\equiv r_1-n &&\pmod{q_{1}},\\
    a&\equiv r_2-2n &&\pmod{q_{2}}.
  \end{aligned}
\end{equation}
Now, if the $q_i$ are coprimes, then the above system of
congruences has a single solution $\pmod{q_{1}q_{2}}$ by
the Chinese remainder theorem.  It follows that \cref{eq:64} has at
most $1$ solution in $a\in [0,Q)$ if
\begin{equation}
  \label{eq:66}
  Q\le q_{1}q_{2}.
\end{equation}
Comparing this with \cref{eq:63}, we commit to
\begin{equation}
  \label{eq:67}
  Q= q_{1}q_{2}.
\end{equation}
We then have
\begin{align}
  \label{eq:68}
  &\abs*{\frac1{\sum_{n<N}\mathbbm u_n}
      \sum_{n< N}(U_n-\mathbbm u_n)\cdot \frac1{Q}\sum_{a<Q}
      f_0(a)f_1(a+n)f_2(a+2n)}\\
    \intertext{by \cref{eq:62} and since $Q=q_1q_2$}
    &= \abs*{\frac1{Q}\sum_{r_{1}<q_{1}}
    \sum_{r_{2}<q_2}\frac1{\sum_{n<N}\mathbbm u_n}
      \sum_{n< N}(U_n-\mathbbm u_n)\cdot \sum_{a<Q}f_0(a)\chi_{1}(a+n-r_1)f_{1}(a+n)
      \chi_2(a+2 n-r_2)f_2(a+2 n)}\\
    &\le \sup_{r_i<q_i}\abs*{\frac1{\sum_{n<N}\mathbbm u_n}
      \sum_{n< N}(U_n-\mathbbm u_n)\cdot \sum_{a<Q}f_0(a)\chi_{1}(a+n-r_1)f_{1}(a+n)
      \chi_2(a+2 n-r_2)f_2(a+2 n)}.
\end{align}
It follows, using the union estimate \cref{eq:53},
\begin{equation}
   \begin{multlined}
   P\paren*{\sup_{f_i\in \mathcal{F}_i}\abs*{\frac1{\sum_{n<N}\mathbbm u_n}
      \sum_{n< N}(U_n-\mathbbm u_n)\cdot
      \sum_{a<Q}f_0(a)f_{1}(a+n)f_2(a+2 n)}>\epsilon_N}\\
  \le q_1q_2\sup_{r_i<q_i}P\paren*{\sup_{f_i\in \mathcal{F}_i}\abs*{\frac1{\sum_{n<N}\mathbbm u_n}
      \sum_{n< N}(U_n-\mathbbm u_n)\cdot \sum_{a<Q}f_0(a)\chi_{1}(a+n-r_1)f_{1}(a+n)
      \chi_2(a+2 n-r_2)f_2(a+2 n)}>\epsilon_N}.
  \end{multlined}
\end{equation}
The contribution of $q_1q_2=Q<N= \exp{\paren*{ \log N}}$ is insignificant to
the order of $\exp{\paren*{-\epsilon_N^2\sum_{n<N}\mathbbm
    u_n}}=\exp{\paren*{-O(N^{2/3})}}$ which we aim to achieve in
\cref{eq:59}, so we ignore it in further considerations. For given
$r_i$, $i=1,2$, the function $g_i$ defined by $g_i(x)=\chi(x-r_i)f_i(x)$ is supported on
the part of the residue class $r_i\pmod{ q_i}$ that falls into the interval
$[0,Q+iN)$.  Denote by $\mathcal{G}_i$ those functions from $\mathcal{F}_i$
which are supported on the residue class $r_i\pmod{ q_i}$
\begin{equation}
  \mathcal{G}_i=\cbrace[\Big]{g_i\mid g_i:[0,Q+iN)\to \cbrace*{0,1},
    g_i(x)\ne0\implies x\equiv r_i\pmod {q_i}}.
\end{equation}
Since the cardinality of the residue class $r_i\pmod {q_i}$ that falls into
the interval $[0,Q+iN)$ is not more than $(Q+iN)/q_i<3N/q_i$, we have
\begin{equation}
  \abs*{\mathcal{G}_i}\le 2^{2N/q_i}, i=1,2.\label{eq:69}
\end{equation}
Our remaining task is to estimate, using the convenient notation $\mathcal{G}_0=\mathcal{F}_0$,
\begin{equation}
  \label{eq:70}
  P\paren*{\sup_{g_i\in \mathcal{G}_i}\abs*{\frac1{\sum_{n<N}\mathbbm u_n}
      \sum_{n< N}(U_n-\mathbbm u_n)\cdot \sum_{a<Q}g_0(a)g_{1}(a+n)
      g_2(a+2 n)}>\epsilon_N}.
\end{equation}
In the above, $\abs*{\sum_{a<Q}g_0(a)g_{1}(a+n) g_2(a+2 n)}$ is
bounded by $1$ for each $n$.  This is because $q_1q_2=Q$, $q_1$ and $q_2$
are coprimes, hence, by the Chinese remainder theorem, there is one
$a<Q$ for which $a+n\equiv r_1\pmod {q_1}$ and $a+2n\equiv r_2\pmod
{q_2}$. Clearly
\begin{equation}
  \label{eq:71}
  \abs*{\mathcal{G}_0}= 2^{Q},
\end{equation}
hence, using \cref{eq:69},
\begin{equation}
  \label{eq:72}
  \abs*{\mathcal{G}_0\times \mathcal{G}_1 \times \mathcal{G}_2}\le 2^{Q+2N/q_1+2N/q_2}.
\end{equation}
We can now estimate, using the union estimate \cref{eq:53} and
\Cref{cor:1} with $a_n=\sum_{a<Q}g_0(a)g_{1}(a+n) g_2(a+2 n)$, as
\begin{equation}
  \label{eq:73}
  \begin{multlined}
    P\paren*{\sup_{g_i\in
        \mathcal{G}_i}\abs*{\frac1{\sum_{n<N}\mathbbm u_n} \sum_{n<
          N}(U_n-\mathbbm u_n)\cdot \sum_{a<Q}g_0(a)g_{1}(a+n) g_2(a+2
        n)}>\epsilon_N}\\
    \le 2^{Q+2N/q_1+2N/q_2}\cdot \exp{\paren*{-\frac14\epsilon_N^2
        \sum_{n<N}\mathbbm u_n}}\\
    \le \exp{\paren*{(Q+2N/q_1+2N/q_2)-\frac14\epsilon_N^2 \sum_{n<N}\mathbbm
      u_n}},
  \end{multlined}
\end{equation}
where we used that $\log 2<1$. In order to have a nontrivial estimate,
we need to have something like
\begin{equation}
  \label{eq:74}
  Q+\frac{2N}{q_1}+ \frac{2N}{q_2}\le \frac1{5}\cdot \epsilon_N^2
  \sum_{n<N}\mathbbm u_n.
\end{equation}
Treating the left hand side as if $q_i$ and $Q$ were real variables,
since $Q=q_1q_2$, the left hand side has a minimum at $Q=cN^{2/3}$,
and $q_1=q_2=cN^{1/3}$.  We certainly can choose coprime integers
$q_i$ which nearly satisfy these requirements.  With these choices
\cref{eq:74} becomes
\begin{equation}
  \label{eq:75}
  cN^{2/3} \le \frac1{5}\cdot \epsilon_N^2
  \sum_{n<N}\mathbbm u_n.
\end{equation}
We certainly can achieve this by taking $C$ large enough in
\cref{eq:47}. With these choices of $C,Q,q_i$, the estimate in
\cref{eq:73} becomes
\begin{equation}
  \label{eq:76}
  P\paren*{\sup_{g_i\in
        \mathcal{G}_i}\abs*{\frac1{\sum_{n<N}\mathbbm u_n} \sum_{n<
          N}(U_n-\mathbbm u_n)\cdot \sum_{a<Q}g_0(a)g_{1}(a+n) g_2(a+2
        n)}>\epsilon_N}\le \exp{\paren*{-cN^{2/3}}},
\end{equation}
finishing our proof.

\subsection{The general $\ell$ case}
\label{sec:general-ell-case}

There are only three remarks we need to make here, and then the proof
goes through verbatim. 

The first remark is that in \cite{MR0498471} \cref{eq:34} is proved
for all $\ell$:
\begin{equation}
  \label{eq:77}
  \liminf_{N\to\infty} \frac1{N}\sum_{n< N} \overline d\paren[\Big]{A\cap
    (A-n)\cap \dots \cap  (A-(\ell-1)n)\cap (A-\ell n)}>0.
\end{equation}
For an elementary proof: N. Hegyvári pointed out to us that the method
of \cite{MR0106865} can be used for the same conclusion\footnote{In
  \cite{MR0106865}, only the $\ell=2$ case is treated, but the method
  works for any $\ell$. For a proof for any $\ell$, see Theorem $8'$
  in
  \url{http://terrytao.wordpress.com/2008/08/30/the-correspondence-principle-and-finitary-ergodic-theory/}.}.
The slightly strengthened version we really need, and which we
described in \cref{eq:45}, is also valid.

Second, this time we need to choose the $(\epsilon_N)$ so that
\begin{equation}
  \epsilon_N\to 0\text{ and }  \frac{\epsilon_N^2\sum_{n<N}\mathbbm
    u_n}{N^{1-1/(\ell+1)}}>C,\text{ with some constant $C>0$ to
    be chosen later, for
    all }N>N(C).
\end{equation}

For the third remark, let us look at the essential inequality we
really need to prove: with a $Q<N$ to be chosen later, we need to
prove the following analog of \cref{eq:59}
\begin{equation}
  \begin{multlined}
    P\,\paren*{ \sup_{f_i\in \mathcal{F}_i}
    \abs*{\frac1{\sum_{n<N}\mathbbm u_n} \sum_{n< N}(U_n-\mathbbm
      u_n)\cdot \frac1{Q}\sum_{a<Q}
      f_0(a)f_1(a+n)\dots f_{\ell-1}(a+(\ell-1) n)f_{\ell}(a+\ell n)}>\epsilon_N }\\
  < \exp{\paren*{-c\epsilon_N^2\sum_{n<N}\mathbbm u_n }},
  \end{multlined}
\end{equation}
where
\begin{equation}
   \mathcal{F}_i=\cbrace[\Big]{f_i\mid f_i:[0,Q+iN)\to
    \cbrace*{0,1}}, i=0,1,2,\dots,\ell.
\end{equation}
This time, we need to slim down the supports of $\ell$ functions
$f_{1},f_{2},\dots,f_{\ell}$. Correspondingly, we choose $\ell$
pairwise coprime moduli $q_{1},q_2,\dots,q_{\ell}$ to split the
support of $f_i$, the interval $[0,Q+i N)$, according to the residue
classes $\pmod{q_i}$.  Proceeding as in the case $\ell=2$, we get the
requirements
\begin{equation}
  \label{eq:78}
  q_{1}q_{2}\dots q_\ell= Q, 
\end{equation}
and
\begin{equation}
  \label{eq:79}
  \frac{\ell N}{q_{1}}+ \frac{\ell N}{q_{2}}+\dots+\frac{\ell N}{q_{\ell}}+Q\le
  \frac15\epsilon_N^2 \sum_{n<N}\mathbbm u_n.
\end{equation}
If the $q_i$ and $Q$ were real variables, the left hand side would be
minimized when
\begin{equation}
  Q=cN^{\ell/(\ell +1)},
\end{equation}
and the $q_i$ were equal to each other.  By \cref{eq:78} this means
\begin{equation}\label{eq:80}
  q_i=Q^{1/\ell}=cN^{1/(\ell+1)}.
\end{equation}
With these values, \cref{eq:79} becomes, with some $c$ depending on
$\ell$ only,
\begin{equation}
  \label{eq:81}
  cN^{1-1/(\ell +1)}<\epsilon_N^2 \sum_{n<N}\mathbbm u_n, 
\end{equation}
which suggested the assumption in \cref{eq:8}.  The only question
remains if we can choose pairwise coprime $q_i$'s near the optimal
value $N^{1/(\ell+1)}$ of \cref{eq:80}?  To do this, choose each $q_i$
to be a prime number with $N^{1/(\ell +1)}<q_i<2N^{1/(\ell +1)}$.
This is possible for large enough $N$ by the prime number theorem.
The much simpler Chebyshev estimate $\frac12 t/\log t <\pi(t)< 2t/\log
t$ is also enough, but then the primes would satisfy $N^{1/(\ell
  +1)}<q_i<5N^{1/(\ell +1)}$.

\section{Proof of \Cref{bigthm:2}, the random averaging theorem}
\label{sec:proof-}

Similarly to the proof of \Cref{bigthm:1}, we just explain the idea of
the proof for $\ell=2$.  Since the proof is very similar to the proof
of \Cref{bigthm:1}, we just point out the differences between the
arguments.

\subsection{The $\ell=2$   case}
\label{sec:ell=1-case}

We want to show that with probability $1$ in any dynamical system 
\begin{equation}
  \label{eq:82}
  \frac1{\sum_{n<N}U_n}\sum_{n<N}U_n\cdot T^nF_1\cdot T^{2n}F_2 \text{ converge in $L^1$ norm.}
\end{equation}

Approximating $F_i$ in $L^1$ norm by linear combinations of indicator
functions, we see it's enough to show convergence for indicators
$F_i$ of measurable subsets of $X$. By the strong law of large numbers, \Cref{thm:2}, we have
\begin{equation}
  \label{eq:83}
  \frac{\sum_{n<N}U_n}{\sum_{n<N}\mathbbm u_n}\to 1 \text{ with probability 1, }
\end{equation}
hence \cref{eq:82} is equivalent to
\begin{equation}
  \label{eq:84}
  \frac1{\sum_{n<N}\mathbbm u_n}\sum_{n<N}U_n\cdot T^nF_1\cdot T^{2n}F_2 \text{ converge in $L^1$ norm.}
\end{equation}
The expectation of the averages $\frac1{\sum_{n<N}\mathbbm
  u_n}\sum_{n<N}U_n\cdot T^nF_1\cdot T^{2n}F_2 $ is
\begin{equation}
  \label{eq:85}
  \frac1{\sum_{n<N}\mathbbm u_n}\sum_{n<N}\mathbbm u_n\cdot T^nF_1\cdot T^{2n}F_2.
\end{equation}
Summation by parts shows that the averages in \cref{eq:85} converge to
the same limit as the double ergodic averages
$\frac1{N}\sum_{n<N}T^nF_1\cdot T^{2n}F_2$.  That these double averages
converge in norm was proved by \cite{MR0498471}, so it is enough to
prove that the differences between the random averages and their
expectations go to $0$ in $L^1$ norm
\begin{equation}
  \label{eq:86}
  \lim_{N\to\infty}\sup_{(X,T,\mu),F_i}\int_X\abs*{\frac1{\sum_{n<N}\mathbbm u_n}\sum_{n<N}(U_n-\mathbbm u_n) \cdot T^nF_1\cdot T^{2n}F_2}=0
  \text{ with probability 1, }
\end{equation}
where in $\sup_{(X,T,\mu),F_i}$ we take the supremum over all dynamical
systems $(X,T,\mu)$ and indicators $F_i$, $1=1,2$.

We first reduce all to a single dynamical system using an appropriate
version of the transference principle of A. Calderón.  Since $T$ is measure
preserving, we have, for every integer $a$
\begin{equation}
  \label{eq:87}
  \int_X\abs*{\frac1{\sum_{n<N}\mathbbm u_n}\sum_{n<N}(U_n-\mathbbm
    u_n)\cdot T^nF_1\cdot T^{2n}F_2 } = \int_X\abs*{\frac1{\sum_{n<N}\mathbbm u_n}\sum_{n<N}(U_n-\mathbbm u_n) \cdot T^{a+n}F_1 T^{a+2n}F_2}.
\end{equation}
Average this in $a$ over the interval $[0,Q)$, where $Q$ is the same
as in the proof of \Cref{bigthm:1} in the $\ell=2$ case, so its best
choice will be $Q\sim N^{2/3}$. We get
\begin{equation}
  \label{eq:88}
  \int_X\abs*{\frac1{\sum_{n<N}\mathbbm u_n}\sum_{n<N}(U_n-\mathbbm
    u_n) \cdot T^nF_1\cdot T^{2n}F_2 } = \int_X\frac1Q\sum_{a<Q}\abs*{\frac1{\sum_{n<N}\mathbbm u_n}\sum_{n<N}(U_n-\mathbbm u_n) \cdot T^{a+n}F_1 T^{a+2n}F_2}.
\end{equation}
It follows that
\begin{equation}
  \label{eq:89}
  \sup_{F_i}\int_X\abs*{\frac1{\sum_{n<N}\mathbbm u_n}\sum_{n<N}(U_n-\mathbbm
    u_n) \cdot T^nF_1\cdot T^{2n}F_2 } \le \int_X\sup_{F_i}\frac1Q\sum_{a<Q}\abs*{\frac1{\sum_{n<N}\mathbbm u_n}\sum_{n<N}(U_n-\mathbbm u_n) \cdot T^{a+n}F_1 T^{a+2n}F_2}.
\end{equation}
Define the $\mathbbm N\to\cbrace{0,1}$ indicators $f_1,f_2$ by
\begin{equation}
  \label{eq:90}
  \begin{aligned}
    f_1(a)=f_1(x,a)&=T^{a}F_1(x)\mathbbm 1_{[0,Q+N)}(a),\\
    f_2(a)=f_2(x,a)&=T^{a}F_2(x)\mathbbm 1_{[0,Q+2N)}(a).
  \end{aligned}
\end{equation}
Then for the values of $a$ and $n$ considered above we have
$T^{a+n}F_1(x)=f_1(a+n)$ and $T^{a+2n}F_1(x)=f_2(a+2n)$.  Denote
by $\mathcal F_i$ the family of indicators supported on
the interval $[0,Q+iN)$
\begin{equation}
  \label{eq:91}
  \mathcal{F}_i=\cbrace[\Big]{f_i\mid f_i:[0,Q+iN)\to
    \cbrace*{0,1}}.
\end{equation}
We then have the pointwise estimate
\begin{equation}
  \label{eq:92}
  \begin{multlined}
    \sup_{F_i}\frac1Q\sum_{a<Q}\abs*{\frac1{\sum_{n<N}\mathbbm
        u_n}\sum_{n<N}(U_n-\mathbbm u_n) \cdot T^{a+n}F_1(x)
      T^{a+2n}F_2(x)} \\
    \le \sup_{f_i\in
      \mathcal{F}_i}\frac1Q\sum_{a<Q}\abs*{\frac1{\sum_{n<N}\mathbbm
        u_n}\sum_{n<N}(U_n-\mathbbm u_n) \cdot f_1(a+n)f_2(a+2n)}.
  \end{multlined}
\end{equation}
Integrating this over the probability space $X$, and using
\cref{eq:89}, we get
\begin{equation}
  \label{eq:93}
  \begin{multlined}
    \sup_{(X,T,\mu)}\sup_{F_i}\int_X\abs*{\frac1{\sum_{n<N}\mathbbm
        u_n}\sum_{n<N}(U_n-\mathbbm u_n) \cdot T^{n}F_1
      T^{2n}F_2} \\
    \le  \int_X\sup_{f_i\in
      \mathcal{F}_i}\frac1Q\sum_{a<Q}\abs*{\frac1{\sum_{n<N}\mathbbm
        u_n}\sum_{n<N}(U_n-\mathbbm u_n)\cdot f_1(a+n)f_2(a+2n) }\\
    =\sup_{f_i\in
      \mathcal{F}_i}\frac1Q\sum_{a<Q}\abs*{\frac1{\sum_{n<N}\mathbbm
        u_n}\sum_{n<N}(U_n-\mathbbm u_n)\cdot f_1(a+n)f_2(a+2n) }
  \end{multlined}
\end{equation}
The important thing to notice is that the final estimate of
\cref{eq:93} doesn't depend on the dynamical system $(X,T,\mu)$
anymore and, furthermore, in $\sup_{f_i\in \mathcal{F}_i}$ we take the
supremum over finitely many functions $f_i$, $i=1,2$. Our remaining
task is to show that
\begin{equation}
  \label{eq:94}
  \lim_{N\to\infty} \sup_{f_i\in \mathcal{F}_i}\frac1Q\sum_{a<Q}\abs*{\frac1{\sum_{n<N}\mathbbm
      u_n}\sum_{n<N}(U_n-\mathbbm u_n) \cdot f_1(a+n)f_2(a+2n)}=0  \text{ with probability 1. }
\end{equation}
Let us write the above average in dual form: Let $\mathcal G=\mathcal
G(N)$ be the family of $\pm 1$-valued functions supported on the
interval $[0,Q)$
\begin{equation}
  \label{eq:95}
  \mathcal{G}=\cbrace[\Big]{g\mid g:[0,Q)\to \cbrace*{-1,1}}.
\end{equation}
We then have
\begin{equation}
  \label{eq:96}
  \begin{multlined}
    \sup_{f_i\in
      \mathcal{F}_i}\frac1Q\sum_{a<Q}\abs*{\frac1{\sum_{n<N}\mathbbm
        u_n}\sum_{n<N}(U_n-\mathbbm u_n) \cdot f_1(a+n)f_2(a+2n)}\\
    =\sup_{f_i\in \mathcal{F}_i,g\in
      \mathcal{G}}\frac1Q\sum_{a<Q}\frac1{\sum_{n<N}\mathbbm
      u_n}\sum_{n<N}(U_n-\mathbbm u_n) \cdot g(a) f_1(a+n)f_2(a+2n)\\
    =\sup_{f_i\in \mathcal{F}_i,g\in
      \mathcal{G}}\frac1{\sum_{n<N}\mathbbm
      u_n}\sum_{n<N}(U_n-\mathbbm u_n) \frac1Q\sum_{a<Q}g(a) f_1(a+n)f_2(a+2n).
  \end{multlined}
\end{equation}
The only difference between the right hand side of \cref{eq:96} above
and
\begin{equation}
  \label{eq:97}
  \sup_{f_i\in
    \mathcal{F}_i}\frac1{\sum_{n<N}\mathbbm
    u_n}\sum_{n<N}(U_n-\mathbbm u_n) \frac1Q\sum_{a<Q}f_0(a) f_1(a+n)f_2(a+2n)
\end{equation}
of \cref{eq:59} is that $\mathcal{G}$ contains $\pm 1$-valued
functions while the $\mathcal{F}_0$ of \cref{eq:97} contains
indicators.  But this difference doesn't change anything in the proof,
since the only thing we use about the functions in $\mathcal{G}$ is
that they take on two values. So the method used to establish
\cref{eq:59} works equally well to prove, for all large enough $ N $,
\begin{equation}
  \label{eq:98}
  P\,\paren*{\sup_{f_i\in \mathcal{F_i},g\in \mathcal{G}}\frac1{\sum_{n<N}\mathbbm
      u_n}\sum_{n<N}(U_n-\mathbbm u_n) \frac1Q\sum_{a<Q} g(a) f_1(a+n)f_2(a+2n) >\epsilon_N }<
  \exp{\paren*{-c\epsilon_N^2\sum_{n<N}\mathbbm u_n }},
\end{equation}
where $\epsilon_N$ is chosen to satisfy \cref{eq:47}.  But this choice
of $\epsilon_N$ implies that the right hand side of \cref{eq:98} is
summable and hence, by the Borel-Cantelli lemma, we have \cref{eq:94}.

\subsection{The general $\ell$ case}
\label{sec:general-ell-case-1}

Our remarks are similar to the three remarks we made in
\Cref{sec:general-ell-case}.  The only difference is in the first
remark: While in the $\ell=2$ case, we invoked Furstenberg's double
convergence theorem, this time we need to refer to the ``multiple mean
ergodic theorem'' of \cite{MR2150389} (cf \cite{MR2257397} for another
proof).  The multiple mean ergodic theorem says that the set of
positive integers is $\ell$-averaging, that is the averages
\begin{equation}
  \label{eq:99}
  \frac1N\sum_{n<N}T^{n}F_{1}\cdot T^{2
    n}F_{2}\cdots T^{\ell n}F_{\ell}
\end{equation}
converge in $L^1$ norm for bounded $F_i$. Then the proof proceeds to
show that the averages along the random sequence converge to the same
limit as the multiple averages above.

\subsection{Pointwise convergence}
\label{sec:pointw-conv}

The main philosophy on pointwise convergence for \emph{any} averages
(not just those in our paper) is that if mean convergence takes place
with an appropriate speed, then we also have pointwise convergence. It
is enough to prescribe the speed for a \emph{subsequence} of the
averages
\begin{equation}
  \label{eq:100}
  \frac1{\sum_{n< N} U_n }\sum_{n< N} (U_n-\mathbbm u_n)T^{n}F_{1}(x)\cdot T^{2
    n}F_{2}(x)\cdots T^{\ell n}F_{\ell}(x).
\end{equation}

What is the sparsest subsequence of the averages the convergence of
which implies the convergence of all the averages? Actually, we are
going to consider a \emph{class} of subsequences: for $\sigma>1$, let
$\cbrace{N_1<N_2<N_3<\dots}$ be such that
\begin{equation}
  \label{eq:101}
  \sum_{n<N_i} \mathbbm u_n \approx \sigma^i.
\end{equation}
Roughly speaking, for each $\sigma>1$, the $N_i$ defined above is a
sequence along which the averages could still change significantly,
but as $\sigma$ gets closer and closer to $1$, this change is as small
as we want.  To understand why, consider the simplest random average
relevant to our problem
\begin{equation}
  \label{eq:102}
  \frac1{\sum_{n< N} \mathbbm u_n }\sum_{n< N} U_n.
\end{equation}
The $N_i$th sum $ \sum_{n< N_i} U_n$ has an expected mass of $
\sum_{n< N_i} \mathbbm u_n\approx \sigma^i$ and the $N_{i+1}$st sum $
\sum_{n< N_{i+1}} U_n$ has an expected mass of $ \sum_{n< N_{i+1}}
\mathbbm u_n\approx \sigma^{i+1}$. Hence the difference $ \sum_{N_i\le
  n< N_{i+1}} U_n$ has an expected mass of $ \sum_{N_i\le n< N_{i+1}}
\mathbbm u_n\approx \sigma^{i+1}-\sigma^i$. What is the proportion of
this increase in mass to $\sigma^i$, the expected mass of the $N_i$th
average? It is $(\sigma^{i+1}-\sigma^i)/\sigma^i=\sigma-1$. This
means, the proportion of this difference is a fixed positive number
(independent of $i$), so if changes happen in this difference, it may
affect averages that are between the $N_i$th and $N_{i+1}$st average.
On the other hand, the proportion $\sigma-1$ of this difference goes
to $0$ as $\sigma$ gets closer to $1$.  We then expect that if the
$N_i$th average and the $N_{i+1}$st averages are both close to $1$,
and if $N$ is between $N_i$ and $N_{i+1}$, then the $N$th average will
also be close to $1$, and the difference will not be much more than
$\sigma-1$.

Let us now carry out this argument for our averages precisely.  Our
random average in \cref{eq:100} is of the form
\begin{equation}
  \label{eq:103}
  \frac1{\sum_{n< N} U_n} \sum_{n<N} (U_n-\mathbbm u_n)G_n
\end{equation}
for some uniformly bounded sequence of functions $G_1,G_2,G_3,\dots$.
We can assume $\norm{G_n}_\infty\le 1$.  By the strong law of large
numbers, \Cref{thm:2}, with probability $1$, we can divide by the
expectation $\sum_{n<N} \mathbbm u_n$ instead of $\sum_{n< N} U_n$
\begin{equation}
  \label{eq:104}
  \frac1{\sum_{n< N} \mathbbm u_n} \sum_{n<N} (U_n-\mathbbm u_n)G_n.
\end{equation}
For $\sigma>1$, we choose the subsequence
$I_\sigma=\cbrace{N_1<N_2<N_3<\dots}$ so that $\sum_{n<N_i}\mathbbm
u_n\approx \sigma^i$,
\begin{equation}
  \label{eq:105}
  \sum_{n<N_i} \mathbbm u_n \ge \sigma^i \text{ but }
  \sum_{n<N} \mathbbm u_n < \sigma^i \text{ for }N<N_i.
\end{equation}
In words: $N_i$ is the smallest index $N$ for which $\sum_{n<N}
\mathbbm u_n\ge \sigma^i$.  That $I_\sigma$ is well defined and
infinite follows from the dissipative assumption $\sum_n \mathbbm
u_n=\infty$. Clearly
\begin{equation}
  \label{eq:106}
  \sum_{n<N_i}\mathbbm
  u_n= \sigma^i+O(1)\text{ for all }i=1,2,3,\dots.
\end{equation}
Note that we also have that $\sum_{n<N_i}U_n\approx \sigma^i$. More
precisely, we have, by the strong law of large numbers, \Cref{thm:2},
and by \cref{eq:106},
\begin{equation}
  \label{eq:107}
  \sum_{n<N_i}U_n= \sigma^i + o\paren*{\sigma^i}\text{ with
    probability }1.
\end{equation}
Let us see how convergence along the subsequences $ I_\sigma$'s,
$\sigma>1$, imply convergence along the full sequence
$N=1,2,3,\dots$. Assume that  for each fixed $\sigma>1$, there
is $\Omega_\sigma$ with $P(\Omega_\sigma)=1$ so that 
\begin{equation}
  \label{eq:108}
  \lim_{i\to\infty} \frac1{\sum_{n< N_i} \mathbbm u_n} \sum_{n<N_i}
  (U_n(\omega)-\mathbbm u_n)G_n(x) = 0\text{  for }\omega\in
  \Omega_\sigma\text{  and for a.e. }x\in X. 
\end{equation}
We want to show that then for each fixed $\sigma>1$, there
is $\Omega_\sigma'$ with $P(\Omega_\sigma')=1$ so that 
\begin{equation}
  \label{eq:109}
  \limsup_{N\to\infty}\abs*{\frac1{\sum_{n< N} \mathbbm u_n} \sum_{n<N}
    (U_n(\omega)-\mathbbm u_n)G_n(x)}\le 4(\sigma-1) \text{  for }\omega\in \Omega_\sigma'\text{  and for a.e. }x\in X.
\end{equation}
It's important to emphasize that the construction of the set
$\Omega_\sigma'$ is independent of $x\in X$, and that the only
property of the sequence $G_1(x), G_2(x),\dots,G_n(x),\dots$ we use is
that it's bounded by $1$. For this reason, we suppress the argument
$x$ in $G_n(x)$.

Now, if we choose a \emph{sequence} of $\sigma$'s which go to $1$,
then the intersection $\Omega'$ of the corresponding sets
$\Omega_\sigma'$ for which \cref{eq:109} holds will give convergence
to $0$,
\begin{equation}
  \label{eq:110}
  \lim_{N\to\infty} \frac1{\sum_{n< N} \mathbbm u_n} \sum_{n<N}
  (U_n(\omega)-\mathbbm u_n)G_n = 0\text{  for }\omega\in \Omega'.
\end{equation}
So fix $\sigma>1$.  By the assumption in \cref{eq:108}, \cref{eq:109}
follows if we show that there is $\Omega_\sigma'$ with
$P(\Omega_\sigma')=1$ so that
\begin{equation}
  \label{eq:111}
  \limsup_{i\to\infty}\max_{N_i\le N<N_{i+1}}\abs*{\frac1{\sum_{n< N}
      \mathbbm u_n} \sum_{n<N} (U_n(\omega)-\mathbbm u_n)G_n-\frac1{\sum_{n<
        N_i} \mathbbm u_n} \sum_{n<N_i}
    (U_n(\omega)-\mathbbm u_n)G_n}\le 4(\sigma-1)\text{  for }\omega\in \Omega'_\sigma.
\end{equation}
For a given $N$, choose the $i$ so that
\begin{equation}
  \label{eq:112}
  N_i\le N<N_{i+1}.
\end{equation}
To estimate the difference between the $N$th and $N_i$th average,
denote
\begin{equation}
  \label{eq:113}
  H_n=(U_n(\omega)-\mathbbm u_n)G_n,
\end{equation}
and write
\begin{align}
  \label{eq:114}
  \frac1{\sum_{n< N} \mathbbm u_n}& \sum_{n<N} H_n-\frac1{\sum_{n<
      N_i} \mathbbm u_n}
  \sum_{n<N_i} H_n\\
  &=\paren*{\frac1{\sum_{n< N} \mathbbm u_n} \sum_{n<N}
    H_n-\frac1{\sum_{n< N} \mathbbm u_n}
    \sum_{n<N_i} H_n} +\paren*{\frac1{\sum_{n< N} \mathbbm u_n} \sum_{n<N_i} H_n-\frac1{\sum_{n< N_i} \mathbbm u_n}\sum_{n<N_i} H_n}\\
  &=\paren*{\frac1{\sum_{n< N} \mathbbm u_n} \sum_{N_i\le n<N}
    H_n}+\paren*{\sum_{n<N_i} H_n}\paren*{\frac1{\sum_{n< N} \mathbbm
      u_n}-\frac1{\sum_{n< N_i} \mathbbm u_n}}.\label{eq:115}
\end{align}
To estimate the two terms in \cref{eq:115}, we need the following
consequence of the strong law of large numbers, \Cref{thm:2},
\begin{equation}
  \label{eq:116}
  \lim_{i\to\infty} \frac{\sum_{N_i\le n<N_{i+1}} U_n(\omega)}{\sum_{N_i\le
      n<N_{i+1}} \mathbbm u_n}=1 \text{ with probability } 1.
\end{equation}
To see \cref{eq:116}, write as
\begin{equation}
  \label{eq:117}
  \frac{\sum_{N_i\le n<N_{i+1}} U_n(\omega)}{\sum_{N_i\le n<N_{i+1}} \mathbbm u_n}
  =\frac1{\sum_{N_i\le n<N_{i+1}} \mathbbm u_n} \paren*{ \sum_{ n<N_{i+1}} U_n(\omega)-\sum_{n<N_{i}} U_n(\omega)}.
\end{equation}
Writing
\begin{equation}
  \label{eq:118}
  \frac1{\sum_{N_i\le n<N_{i+1}} \mathbbm u_n} \sum_{ n<N_{i+1}} U_n(\omega)=\frac{\sum_{n<N_{i+1}} \mathbbm u_n}{\sum_{N_i\le n<N_{i+1}}
    \mathbbm u_n}\cdot \frac1{\sum_{n<N_{i+1}} \mathbbm u_n}\sum_{
    n<N_{i+1}} U_n(\omega),  
\end{equation}
we see that by the strong law of large numbers, we have, with
probability $1$,
\begin{align}
  \label{eq:119}
  \lim_{i\to\infty} \frac1{\sum_{N_i\le n<N_{i+1}} \mathbbm u_n
  }\sum_{ n<N_{i+1}} U_n(\omega)&=\lim_{i\to\infty} \frac{\sum_{n<N_{i+1}}
    \mathbbm u_n}{\sum_{N_i\le n<N_{i+1}}
    \mathbbm u_n}\\
  \intertext{using \cref{eq:106}}
  &=\lim_{i\to\infty} \frac{\sigma^{i+1}+O(1)}{\sigma^{i+1}-\sigma^{i}+O(1)}\\
  &=\frac{\sigma}{\sigma-1}\label{eq:120}.
\end{align}
Similarly
\begin{align}
  \label{eq:121}
  \lim_{i\to\infty} \frac1{\sum_{N_i\le n<N_{i+1}} \mathbbm u_n
  }\sum_{ n<N_{i}} U_n(\omega)&=\lim_{i\to\infty} \frac{\sum_{n<N_{i}}
    \mathbbm u_n}{\sum_{N_i\le n<N_{i+1}}
    \mathbbm u_n}\\
  &=\frac{1}{\sigma-1}\label{eq:122}.
\end{align}
Now take the difference of \cref{eq:120,eq:122} to get our claim in
\cref{eq:116}.

Let's go back to our main task of estimating the two terms in
\cref{eq:115}. Estimate as
\begin{align}
  \label{eq:123}
  \abs*{\frac1{\sum_{n< N} \mathbbm u_n} \sum_{N_i\le n<N} H_n}&\le
  \frac1{\sum_{n< N_i} \mathbbm u_n} \sum_{N_i\le n<N_{i+1}} \abs*{H_n}\\
  &\le \frac1{\sum_{n< N_i} \mathbbm u_n} \sum_{N_i\le n<N_{i+1}}
  (U_n(\omega)+\mathbbm u_n)\\
  &=\frac{\sum_{N_i\le n<N_{i+1}} \mathbbm u_n}{\sum_{n< N_i} \mathbbm
    u_n}\cdot \frac1{\sum_{N_i\le n<N_{i+1}} \mathbbm u_n}\sum_{N_i\le
    n<N_{i+1}} (U_n(\omega)+\mathbbm u_n).
\end{align}
Take ``$\limsup_{i\to\infty}\max_{N_i\le N<N_{i+1}}$'' of both ends,
to get
\begin{align}\label{eq:124}
  \limsup_{i\to\infty}\max_{N_i\le N<N_{i+1}}&\abs*{\frac1{\sum_{n< N}
      \mathbbm u_n} \sum_{N_i\le n<N} H_n}\\
  &\le \limsup_{i\to\infty}\frac{\sum_{N_i\le n<N_{i+1}} \mathbbm
    u_n}{\sum_{n< N_i} \mathbbm u_n}\cdot \frac1{\sum_{N_i\le
      n<N_{i+1}} \mathbbm u_n}\sum_{N_i\le n<N_{i+1}}
  (U_n(\omega)+\mathbbm u_n)\\
  \intertext{using \cref{eq:116} } &=2\cdot
  \lim_{i\to\infty}\frac{\sum_{N_i\le n<N_{i+1}} \mathbbm
    u_n}{\sum_{n< N_i} \mathbbm
    u_n}\\
  \intertext{using \cref{eq:106}} &=2\cdot \lim_{i\to\infty}
  \frac{\sigma^{i+1}-\sigma^i
    +O(1)}{\sigma^i+O(1)}\\
  &=2(\sigma -1)\label{eq:125}.
\end{align}
Similarly
\begin{equation}
  \label{eq:126}
  \limsup_{i\to\infty}\max_{N_i\le N<N_{i+1}}\abs*{\paren*{\sum_{n<N_i} H_n}\paren*{\frac1{\sum_{n< N}
        \mathbbm u_n}-\frac1{\sum_{n< N_i} \mathbbm u_n}}}\le 2(\sigma-1).
\end{equation}
The estimates in \cref{eq:126,eq:125} imply \cref{eq:111}.

The consequence of this lacunary subsequence trick is that for the
averages in \cref{eq:100}, it is enough to prove, for all $\sigma>1$,
almost sure convergence along the subsequence $ I_\sigma$. This is
accomplished if we ensure that the sequence
$\cbrace*{\epsilon_1,\epsilon_2,\dots,\epsilon_n,\dots }$ satisfies
\begin{equation}
  \label{eq:127}
  \sum_{N\in I_\sigma}\epsilon_N<\infty \text{ for all }\sigma>1
\end{equation}
in addition to the requirement in \cref{eq:81}. To see this
implication of the condition in \cref{eq:127}, note first that by the
Borel-Cantelli lemma we have, for almost every $\omega$,
\begin{equation}
  \label{eq:128}
  \sup_{(X,T,\mu),F_i}\int_X\abs*{\frac1{\sum_{n<N}\mathbbm
      u_n}\sum_{n<N}(U_n(\omega)-\mathbbm u_n) \cdot T^nF_1(x)\cdot
    T^{2n}F_2(x)\cdots T^{\ell n}F_\ell(x)}\le \epsilon_N\text{ for
  }N\ge N(\omega). 
\end{equation}
By \cref{eq:127}, we have, for
almost every $\omega$,
\begin{equation}
  \begin{aligned}
    \sup_{(X,T,\mu),F_i}\int_X&\sum_{N\in
      I_\sigma}\abs*{\frac1{\sum_{n<N}\mathbbm
        u_n}\sum_{n<N}(U_n(\omega)-\mathbbm u_n) \cdot T^nF_1(x)\cdot
      T^{2n}F_2(x)\cdots T^{\ell n}F_\ell(x)}\\
    &\le \sum_{N\in
      I_\sigma}\sup_{(X,T,\mu),F_i}\int_X\abs*{\frac1{\sum_{n<N}\mathbbm
        u_n}\sum_{n<N}(U_n(\omega)-\mathbbm u_n) \cdot T^nF_1(x)\cdot
      T^{2n}F_2(x)\cdots T^{\ell n}F_\ell(x)}\\
    &<\infty.
  \end{aligned} \label{eq:129}
\end{equation}
This implies that the series $\sum_{N\in
  I_\sigma}\abs*{\frac1{\sum_{n<N}\mathbbm
    u_n}\sum_{n<N}(U_n(\omega)-\mathbbm u_n) \cdot T^nF_1(x)\cdot
  T^{2n}F_2(x)\cdots T^{\ell n}F_\ell(x)}$ is convergent for almost
every $x$, and hence $\frac1{\sum_{n<N}\mathbbm
  u_n}\sum_{n<N}(U_n(\omega)-\mathbbm u_n) \cdot T^nF_1(x)\cdot
T^{2n}F_2(x)\cdots T^{\ell n}F_\ell(x)\to 0$ for almost every $x$ as $N\in
I_\sigma$ goes to $\infty$.

To ensure \cref{eq:127}, note that in our case, since we at least
assume $\frac{\sum_{n<N}\mathbbm u_n}{N^{1-1/(\ell +1)}}\to\infty$,
the subsequence $ I_\sigma$ does not increase faster to $\infty$ than
a geometric progression. So a near optimal choice for $\epsilon_N$ to
ensure \cref{eq:127} is $\epsilon_N=\log^{-(1+\delta)} N$ for some
fixed positive $\delta$.

Putting all this together, we get the following theorem for the case
when the $F_i$ are indicator functions

\begin{bigthm}[Random pointwise $\ell$-averaging theorem]
  \label{bigthm:4}
  Let $\ell$ be a positive integer and suppose that for some positive
  $\delta$ we have
  \begin{equation}
    \label{eq:130}
    \mathbbm u_n\cdot \frac{n^{1/(\ell +1)}}{\log^{2+\delta}n}\to \infty.
  \end{equation}

  Then, with probability $1$, in any dynamical system $(X,T)$,
  \begin{equation}
    \label{eq:131}
    \lim_{N\to \infty}\frac1{\sum_{n< N} U_n(\omega)}\sum_{n< N} (U_n(\omega)-\mathbbm u_n)\cdot T^{
      n}F_{1}(x)\cdot T^{2n}F_{2}(x)\cdots
    T^{ \ell n}F_{\ell}(x)=0\text{ for a.e. }x\in X,
  \end{equation}
  for bounded functions $F_i$.
\end{bigthm}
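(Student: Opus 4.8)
The plan is to obtain \Cref{bigthm:4} by feeding the exponential large‑deviation estimate that already drives \Cref{bigthm:2} into the machinery of \cref{sec:pointw-conv}. Write $G_n=T^nF_1\cdots T^{\ell n}F_\ell$, so that the object to be controlled is the \emph{difference} average $D_N(x)=\frac1{\sum_{n<N}\mathbbm u_n}\sum_{n<N}(U_n-\mathbbm u_n)G_n(x)$ (depending also on the system and the $F_i$), where $\norm{G_n}_\infty\le1$. First I would make two harmless reductions: by the strong law of large numbers (\Cref{thm:2}) the normalisations $\sum_{n<N}U_n$ and $\sum_{n<N}\mathbbm u_n$ are almost surely asymptotic, so it suffices to divide by the deterministic one; and by the distribution‑function representation $F=\int_0^1\mathbbm 1_{\cbrace{F>t}}\,dt$ for $0\le F\le1$ together with Fubini, a.e.\ convergence for all bounded $F_i$ follows from a.e.\ convergence for all \emph{indicator} $F_i$. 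So I would prove the indicator case, which is precisely the setting in which the proof of \Cref{bigthm:2}, run for general $\ell$ and with the supremum over all dynamical systems and all indicators kept \emph{inside} the probability (exactly as in the derivation of \cref{eq:98}), yields
\begin{multline*}
  P\paren*{\,\sup_{(X,T,\mu),\,F_i}\int_X\abs*{\frac1{\sum_{n<N}\mathbbm u_n}\sum_{n<N}(U_n-\mathbbm u_n)\,T^nF_1\cdots T^{\ell n}F_\ell}>\epsilon_N}\\
  <\exp{\paren*{-c\epsilon_N^2\sum_{n<N}\mathbbm u_n }}.
\end{multline*}

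The crux is to choose the sequence $(\epsilon_N)$ so that it serves two masters simultaneously. I would set $\epsilon_N=\log^{-(1+\delta')}N$ for a fixed $\delta'$ with $0<\delta'<\delta/2$. On one hand, summation by parts turns the hypothesis \cref{eq:130} into $\sum_{n<N}\mathbbm u_n\gg N^{1-1/(\ell+1)}\log^{2+\delta}N$, whence $\epsilon_N^2\sum_{n<N}\mathbbm u_n\gg N^{1-1/(\ell+1)}\log^{\delta-2\delta'}N\to\infty$; thus \cref{eq:81} holds, and the right‑hand side of the displayed inequality is $\le\exp{\paren*{-cN^{1-1/(\ell+1)}}}$, which is summable over \emph{all} $N$. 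On the other hand, for the lacunary subsequence $I_\sigma=\cbrace{N_1<N_2<\dots}$ of \cref{eq:105} one has $\sigma^i\le\sum_{n<N_i}\mathbbm u_n\le N_i$, so $\log N_i\ge i\log\sigma$ and therefore $\sum_{N\in I_\sigma}\epsilon_N\le(\log\sigma)^{-(1+\delta')}\sum_i i^{-(1+\delta')}<\infty$, which is exactly condition \cref{eq:127} for every $\sigma>1$.

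With $(\epsilon_N)$ so chosen, summability over all $N$ together with the Borel--Cantelli lemma gives a full‑measure set of $\omega$ on which \cref{eq:128} holds: $\int_X\abs*{D_N}\le\epsilon_N$ for all $N\ge N(\omega)$, with the bound holding simultaneously for all systems and all indicators. Now I fix a system and indicators $F_i$, sum this bound over $N\in I_\sigma$, use \cref{eq:127}, and apply Tonelli to conclude that $\sum_{N\in I_\sigma}\abs*{D_N(x)}<\infty$ for a.e.\ $x$; hence $D_N(x)\to0$ along $I_\sigma$ for a.e.\ $x$, the exceptional $\omega$‑set being independent of the system and of the $F_i$. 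Since $\abs*{G_n(x)}\le1$ for a.e.\ $x$, the lacunary subsequence comparison carried out in \cref{sec:pointw-conv} applies verbatim and upgrades convergence along each $I_\sigma$ to $\limsup_N\abs*{D_N(x)}\le4(\sigma-1)$ off a null $\omega$‑set whose construction is independent of $x$; intersecting over a sequence $\sigma_k\downarrow1$ gives $D_N(x)\to0$ along the full sequence $N=1,2,\dots$, for a.e.\ $x$, with probability $1$. Undoing the distribution‑function reduction (still using a single null $\omega$‑set, now good for all bounded $F_i$) and the normalisation change then finishes the proof.

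I do not expect the analysis proper to be the obstacle: the $\epsilon$‑covering bound via independent residue classes, which is the only hard estimate, is inherited wholesale from \Cref{bigthm:2}. The genuinely delicate point will be \emph{uniformity of the exceptional set} --- the single null set of $\omega$ must be good for \emph{all} dynamical systems and \emph{all} bounded $F_i$ at once. This is what forces the supremum over systems and functions to sit inside the probability in the displayed estimate, and what forces the passage from indicators to bounded functions to go through the $L^1$‑preserving distribution‑function representation and Fubini rather than a generic $L^1$ approximation. The remaining thing to get right is the joint bookkeeping of \cref{eq:81} and \cref{eq:127}: the slack $\delta'<\delta/2$ between the exponent of $\epsilon_N$ and the exponent $2+\delta$ appearing in \cref{eq:130} is exactly what makes both conditions hold, and it is the source of the ``$\log^{2+\delta}$'' in the hypothesis.
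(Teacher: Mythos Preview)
Your argument for the \emph{indicator} case is essentially the paper's: the same exponential estimate from \Cref{bigthm:2}, the same choice $\epsilon_N=\log^{-(1+\delta')}N$, and the same lacunary subsequence comparison of \cref{sec:pointw-conv}. Your bookkeeping of the two competing conditions \cref{eq:81} and \cref{eq:127} is correct and matches the paper's.

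Where you diverge is in the passage from indicators to bounded $F_i$. The paper does this by a maximal inequality argument: it dominates $\abs{A_N(F_1-G_1,F_2,\dots,F_\ell)}$ pointwise by $B_N(F_1-G_1)+C_N(F_1-G_1)$ and then invokes the $L^2$ maximal inequalities for $B_N$ (classical) and for $C_N$ (Bourgain, \cite{MR937581}), iterating over the coordinates. Your route via the layer--cake representation $F_i=\int_0^1\mathbbm 1_{\cbrace{F_i>t_i}}\,dt_i$, multilinearity, and Fubini is a genuine alternative and is essentially the convexity method the paper sketches in \Cref{sec:convexity-methods}. It has the advantage of being self--contained: you never need Bourgain's random maximal inequality. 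The paper's route, on the other hand, makes the maximal control explicit and is the standard ergodic-theoretic device.

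One point you skate over: after Fubini gives you $D_N^{(t)}(x)\to0$ for a.e.\ $t$ (for a.e.\ $x$), you still need to pass the limit through $\int_{[0,1]^\ell}\cdots\,dt$. That requires a bound on $\abs{D_N^{(t)}(x)}$ uniform in $N$ and $t$; the crude estimate $\abs{D_N^{(t)}(x)}\le\paren*{\sum_{n<N}U_n+\sum_{n<N}\mathbbm u_n}\big/\sum_{n<N}\mathbbm u_n$ together with the strong law of large numbers you already invoked supplies this (after enlarging the null $\omega$--set once), so dominated convergence finishes. You should make this step explicit.
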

The main utility of this theorem is that once we have almost
everywhere convergence for the multiple averages
\begin{equation}
  \label{eq:132}
  \frac1{N}\sum_{n< N} T^{n}F_{1}(x)\cdot T^{2n}F_{2}(x)\cdots
  T^{ \ell n}F_{\ell}(x), 
\end{equation}
then the averages
\begin{equation}
  \label{eq:133}
  \frac1{\sum_{n< N} U_n}\sum_{n< N} U_n\cdot T^{n}F_{1}(x)\cdot T^{2n}F_{2}(x)\cdots
  T^{ \ell n}F_{\ell}(x)
\end{equation}
along the random sequence also converge and to the same limit for
almost every $x$.  So far, the a.e. convergence of \cref{eq:132} in
\emph{every} dynamical system is known only in case $\ell=1,2$.  For
$\ell=1$, this is the pointwise ergodic theorem, for $\ell=2$ it was
proved by \cite{MR1037434}. Moreover, the pointwise convergence of the
averages in \cref{eq:132} for any $\ell$ is known in a great variety
of particular dynamical systems such as in
\begin{itemize}
\item discrete spectrum or quasi-discrete spectrum dynamical systems;
\item K-systems (cf. \cite{MR1422310});
\item a large class of skew-product extensions of ergodic compact
  group translations (cf. \cite{MR1242635});
\item a lot of dynamical systems with a compact topological structure
  where \emph{everywhere} convergence can be proved for continuous
  functions, hence the general pointwise ergodic theorem follows by a
  maximal inequality.  An example for this that we haven't mentioned
  is a horocycle flow on a compact surface with negative curvature,
  or, more generally, unipotent transformations (Ratner's theory).
\end{itemize}

So far, we showed how to prove \Cref{bigthm:4} for indicator
functions. The extension to bounded functions is more or less
standard: Since we have a.e. convergence for a dense set of functions,
namely for linear combinations of indicators, we just need a ``maximal
inequality'' argument.  As we will see, in this maximal inequality
argument, we will work with a set of $\omega$ which satisfy conditions
that are \emph{independent} of the dynamical system.  The two main
conditions are  the a.e. convergence of the averages in \cref{eq:131}
when the $F_i$ are indicators, and the maximal inequality in
\cref{eq:137} below. We will also use the strong law of large numbers,
\Cref{thm:2}. 

The way we'll do the extension of a.e. convergence to bounded
functions is that we first extend a.e. convergence to all bounded
$F_1$, then we extend it to all bounded $F_1,F_2$, then to all bounded
$F_1,F_2,F_3$, etc.

For this argument, it is convenient to introduce three notations: one
for our averages, normalized by $\sum_{n< N} \mathbbm u_n$ instead of
$\sum_{n< N} U_n$
\begin{equation}
  \label{eq:134}
  A_N(F_1,F_2,\dots,F_\ell)=\frac1{\sum_{n< N} \mathbbm u_n}\sum_{n< N} (U_n-\mathbbm u_n)\cdot T^{
    n}F_{1}\cdot T^{2n}F_{2}\cdots
  T^{ \ell n}F_{\ell},
\end{equation}
and two for the 1-linear, dominating averages
\begin{equation}
  \label{eq:135}
  B_NF=\frac1{\sum_{n< N} \mathbbm u_n}\sum_{n< N} \mathbbm u_n\cdot
  \abs*{T^{n}F},
  \qquad
  C_NF=\frac1{\sum_{n< N} \mathbbm u_n}\sum_{n< N} U_n\cdot
  \abs*{T^{n}F}.
\end{equation}
A summation by parts argument shows that $B_N$ is pointwise dominated
by the usual ergodic averages $\frac1{N}\sum_{n< N} \abs*{T^{n}F}$,
and hence we have the maximal inequality for the $B_N$
\begin{equation}
  \label{eq:136}
  \int_X\paren*{\sup_N B_NF}^2\le c\int_XF^2.
\end{equation}
With probability $1$, we also have a maximal inequality for the $C_N$
\begin{equation}
  \label{eq:137}
  \int_X\paren*{\sup_N C_NF}^2\le c_\omega\int_XF^2.
\end{equation}
This was established by \cite{MR937581} for a much wider range of
$\mathbbm u_n$ than we consider in this theorem: they just need to
satisfy
\begin{equation}
  \label{eq:138}
  \frac{\sum_{n< N} \mathbbm u_n}{(\log\log N)^{1+\delta}\log N} \to
  \infty\text{ for some positive }\delta.
\end{equation}
That this extra $\log\log N$ factor is \emph{necessary} for the
maximal inequality and a.e. convergence, compared to the lone $\log N$
needed for mean convergence, was proved in \cite{MR1721622}.

Let now $F_1$ be a function bounded by $1$, and assume
$F_2,F_3,\dots,F_\ell$ are indicators.  We want to show that, with
probability $1$, the oscillation of the $A_N$ is $0$ almost surely
\begin{equation}
  \label{eq:139}
  \mu\paren*{\limsup_N \abs*{A_N(F_1,F_2,\dots,F_\ell) }=0}=1.
\end{equation}
We show this by showing that for any $\epsilon>0$
\begin{equation}
  \label{eq:140}
  \mu\paren*{\limsup_N \abs*{A_N(F_1,F_2,\dots,F_\ell) }>\epsilon}\le \epsilon.
\end{equation}
Let $\epsilon>0$ be given and let $G_1$ be a linear combination of
indicators so that
\begin{equation}
  \label{eq:141}
  \paren*{\int_X \abs*{F_1-G_1}^2}^{1/2}<\delta,
\end{equation}
where we'll choose $\delta$ later appropriately for $\epsilon$. By
\Cref{bigthm:4}, we have
\begin{equation}
  \label{eq:142}
  \limsup_N \abs*{A_N(G_1,F_2,\dots,F_\ell)(x) }=0 \text{ for a.e. }x.
\end{equation}
We then have
\begin{equation}
  \label{eq:143}
  \limsup_N \abs*{A_N(F_1,F_2,\dots,F_\ell)(x) }= \limsup_N \abs*{A_N(F_1-G_1,F_2,\dots,F_\ell)(x) }\text{ for a.e. }x.
\end{equation}
Using the boundedness of the $F_i$ by $1$ we can estimate pointwise as
\begin{equation}
  \label{eq:144}
  \abs*{A_N(F_1-G_1,F_2,\dots,F_\ell)(x) }
  \le B_N\paren*{F_1-G_1}(x)+
  C_N\paren*{F_1-G_1}(x).
\end{equation}
We thus have
\begin{align}
  \label{eq:145}
  \int_X \limsup_N \abs*{A_N(F_1-G_1,F_2,\dots,F_\ell) }
  &\le \int_X \sup_N B_N\paren*{F_1-G_1}+\int_X \sup_N C_N\paren*{F_1-G_1} \\
  \intertext{by the Cauchy-Schwarz-Bunyakovsky inequality for the
    integral $\int_X$}
  &\le \paren*{\int_X \paren*{\sup_N B_N\paren*{F_1-G_1}}^2}^{1/2}+\paren*{\int_X \paren*{\sup_N C_N\paren*{F_1-G_1}}^2}^{1/2}\\
  \intertext{using the maximal inequalities for $B_N$ in \cref{eq:136}
    and for $C_N$ in \cref{eq:137}}
  &\le \paren*{c\int_X \paren*{F_1-G_1}^2}^{1/2}+\paren*{c_\omega\int_X \paren*{F_1-G_1}^2}^{1/2}
  \intertext{by the approximation in \cref{eq:141}} &\le c_\omega
  \delta.
\end{align}
Using this and Markov's inequality, we finally get
\begin{align}
  \label{eq:146}
  \mu\paren*{\limsup_N \abs*{A_N(F_1,F_2,\dots,F_\ell) }>\epsilon}&\le
  \frac1\epsilon \cdot \int_X \limsup_N \abs*{A_N(F_1-G_1,F_2,\dots,F_\ell) }\\
  &\le \frac{c_\omega\delta}\epsilon.
\end{align}
We see, we just have to choose $\delta$ small enough so that
\begin{equation}
  \label{eq:147}
  \frac{c_\omega\delta}\epsilon<\epsilon
\end{equation}
to get \cref{eq:140}.

What we have accomplished so far is that our averages
$A_N(F_1,F_2,\dots,F_\ell)$ converge a.e. to $0$ for all bounded $F_1$
and indicators $F_2,F_3,\dots,F_\ell$.  But we can repeat the argument
with $F_2$ in place of $F_1$ to get that $A_N(F_1,F_2,\dots,F_\ell)$
converge a.e. to $0$ for all bounded $F_1,F_2$ and indicators
$F_3,F_4,\dots,F_\ell$.  Repeating the argument $\ell$ times, we
eventually get that $A_N(F_1,F_2,\dots,F_\ell)$ converge a.e. to $0$
for bounded $F_1,F_2,\dots,F_\ell$.

\section{Proof of \Cref{bigthm:3}, the semirandom convergence theorem
}
\label{sec:proof--1}

Our first observation is that the a.e. convergence of our averages
\begin{equation}
  \label{eq:148}
  \frac1N\sum_{n<N}T^nF_1\cdot(x)T^{r_n}F_2(x)
\end{equation}
to $\overline F_1\overline F_2$ needs to be proved only for indicator
functions $F_1,F_2$. This follows from a similar maximal inequality
argument as we used to extend \Cref{bigthm:4} from indicators to
bounded functions.

So, unless we say otherwise, we now assume, for the rest of the proof,
that the $F_i$ are indicator functions. We will follow the general
structure of the proof in \cite{Fra}.  As in the proof of
\Cref{bigthm:4}, it is enough to prove convergence along a sparse
subsequence of the $N$. As in that proof, for $\sigma>1$, we choose
the subsequence $I_\sigma=\cbrace{N_1<N_2<N_3<\dots}$ so that
$\sum_{n<N_i}\mathbbm u_n\approx \sigma^i$ (and hence
$\sum_{n<N_i}U_n\approx \sigma^i$)
\begin{equation}
  \label{eq:149}
  \sum_{n<N_i} \mathbbm u_n \ge \sigma^i \text{ but }
  \sum_{n<N} \mathbbm u_n < \sigma^i \text{ for }N<N_i.
\end{equation}
To be exact, since $ \mathbbm u_n \to 0 $,  we have
\begin{equation}
  \label{eq:150}
  \sum_{n<N_i}\mathbbm
  u_n= \sigma^i+o(1)\text{ for all }i=1,2,3,\dots
\end{equation}

We notice, that the averages in \cref{eq:148} can be written in the
form
\begin{equation}
  \label{eq:151}
  \frac1{\sum_{n<N}U_n}\sum_{n<N}U_nT^{U_1+U_2+\dots+U_{n}}F_1T^nF_2 .
\end{equation}
The two main steps in proving that the above converges a.e. are that
the ``partial'' expectations
\begin{equation}
  \label{eq:152}
  \frac1{\sum_{n<N}\mathbbm u_n}\sum_{n<N}\mathbbm u_nT^{U_1+U_2+\dots+U_{n}}F_1T^nF_2 
\end{equation}
converge to $\overline F_1\overline F_2$, and that the difference
between the two averages
\begin{equation}
  \label{eq:153}
  \frac1{\sum_{n<N}\mathbbm u_n}\sum_{n<N}(U_n-\mathbbm u_n)T^{U_1+U_2+\dots+U_{n}}F_1T^nF_2 
\end{equation}
converges to $0$.

We will be able to prove the convergence of the partial expectations
in \cref{eq:152} for \emph{any} sequence $(\mathbbm u_n)$ with
$\mathbbm u_n \to 0$ and satisfying the dissipative property
$\sum_{n}\mathbbm u_n=\infty$. To emphasize this fact, we formulate
the result explicitly
\begin{thm}\label{thm:1}
  Suppose the decreasing sequence $\cbrace{\mathbbm u_1\ge\mathbbm
    u_3\ge \mathbbm u_1\ge\dots}$ satisfies
  \begin{equation}
    \label{eq:154}
    \sum_{n}\mathbbm u_n=\infty,
  \end{equation}
  and
  \begin{equation}
    \label{eq:155}
    \lim_{n\to\infty}\mathbbm u_n=0.
  \end{equation}

  We then have, with probability $1$, in any dynamical system,
  \begin{equation}
    \label{eq:156}
    \lim_{N\to\infty} \frac1{\sum_{n<N}\mathbbm
      u_n}\sum_{n<N}\mathbbm u_nT^{U_1+U_2+\dots+U_{n}}F_1T^nF_2
    =\overline F_1\overline F_2\text{ for a.e.}
    x\in X\text{ and any bounded functions } F_i.
  \end{equation}
\end{thm}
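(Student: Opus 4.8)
The plan is to reduce everything, via the Banach principle, to a single one‑variable statement. Throughout write $S_n=U_1+U_2+\dots+U_n$, $v_N=\sum_{n<N}\mathbbm u_n$, and, for a bounded $G$, let $\overline G$ be its $L^2$‑projection onto the $T$‑invariant functions and $\widetilde G=G-\overline G$; recall that $\tfrac1M\sum_{n<M}T^n\widetilde G\to0$ in $L^2$ and, by Birkhoff's theorem, a.e. As in the reductions made for \Cref{bigthm:4} and at the start of the proof of \Cref{bigthm:3}, it suffices to prove \cref{eq:156} for indicator $F_1,F_2$. Fix an indicator $F_1$ and for a bounded $F$ set $T_NF=\tfrac1{v_N}\sum_{n<N}\mathbbm u_n\,T^{S_n}F_1\cdot T^nF$; the $T_N$ are linear, and we must show $T_NF_2\to\overline F_1\,\overline F_2$ a.e. I would get this from the Banach principle, for which two things are needed. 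The maximal inequality is \emph{$\omega$‑free and elementary}: since $|T^{S_n}F_1|\le\|F_1\|_\infty$ pointwise, $\sup_N|T_NF|\le\|F_1\|_\infty\sup_N\tfrac1{v_N}\sum_{n<N}\mathbbm u_n|T^nF|$, and by summation by parts (using that $(\mathbbm u_n)$ is decreasing) this is $\le\|F_1\|_\infty\sup_M\tfrac1M\sum_{n<M}|T^nF|$, which satisfies the Hardy--Littlewood $L^2$ maximal inequality with an absolute constant. Convergence of $T_NF$ a.e. on a dense set is then checked on $T$‑invariant functions and on bounded coboundaries: for $F=g-Tg$ with $g$ bounded, a reindexing in $\sum_{n<N}\mathbbm u_n T^{S_n}F_1(T^ng-T^{n+1}g)$, together with $\sum_n|\mathbbm u_n-\mathbbm u_{n-1}|=O(1)$ and the a.s.\ bound $\sum_{n<N}\mathbbm u_nU_n=o(v_N)$ (from $\mathbbm u_n\to0$ and the strong law, \Cref{thm:2}), gives $T_N(g-Tg)\to0$ uniformly in $x$; for $F$ invariant, $T_NF=F\cdot\tfrac1{v_N}\sum_{n<N}\mathbbm u_n T^{S_n}F_1$, so everything reduces to the single claim
\begin{equation*}
  \frac1{v_N}\sum_{n<N}\mathbbm u_n\,T^{S_n}F_1\longrightarrow\overline F_1\qquad\text{a.e., with probability }1 .
\end{equation*}

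Since $T^{S_n}\overline F_1=\overline F_1$ and $\tfrac1{v_N}\sum_{n<N}\mathbbm u_n=1$, this claim is equivalent to $\tfrac1{v_N}\sum_{n<N}\mathbbm u_n T^{S_n}\widetilde F_1\to0$, and here the key move is the decomposition $\mathbbm u_n=U_n+(\mathbbm u_n-U_n)$. The $U_n$‑part is an honest Birkhoff average: when $U_n=1$ one has $n=r_k$ and $S_n=k$, where $k$ is the rank of $n$ in $R^\omega$, so $\sum_{n<N}U_nT^{S_n}\widetilde F_1=\sum_{k\le S_{N-1}}T^k\widetilde F_1$ and hence $\tfrac1{v_N}\sum_{n<N}U_nT^{S_n}\widetilde F_1=\tfrac{S_{N-1}}{v_N}\cdot\tfrac1{S_{N-1}}\sum_{k\le S_{N-1}}T^k\widetilde F_1\to1\cdot0=0$ by Birkhoff's theorem and the strong law. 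For the rest, write $T^{S_n}\widetilde F_1=T^{S_{n-1}}\widetilde F_1+U_n\bigl(T^{S_{n-1}+1}\widetilde F_1-T^{S_{n-1}}\widetilde F_1\bigr)$, splitting $\tfrac1{v_N}\sum_{n<N}(\mathbbm u_n-U_n)T^{S_n}\widetilde F_1$ into two pieces. Since $T^{S_{n-1}}\widetilde F_1$ is $\sigma(U_1,\dots,U_{n-1})$‑measurable, hence independent of $U_n$, the piece $\tfrac1{v_N}\sum_{n<N}(\mathbbm u_n-U_n)T^{S_{n-1}}\widetilde F_1$ is a normalised sum of bounded martingale differences; its cross terms vanish and, using that $T$ is measure preserving, $\mathbb E_\omega$ of its squared $L^2(X)$‑norm is $\le\|\widetilde F_1\|_{L^2(X)}^2/v_N$. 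In the other piece $(\mathbbm u_n-U_n)U_n=\mathbbm u_nU_n-U_n$, the $\sum_nU_n(\dots)$ part telescopes to $T^{S_{N-1}}\widetilde F_1-\widetilde F_1$, which is bounded and hence $o(v_N)$, while the $\sum_n\mathbbm u_nU_n(\dots)$ part is $O\bigl(\sum_{n<N}\mathbbm u_nU_n\bigr)=o(v_N)$. So the claim holds in $L^2(X)$ with probability $1$.

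It remains to upgrade the $L^2(X)$ statements to a.e.\ statements with a full‑probability set of $\omega$ that does not depend on the dynamical system or the functions. For this I would use the lacunary‑subsequence machinery of \Cref{sec:pointw-conv}. A Calderón‑type transference, as in the proof of \Cref{bigthm:2}, replaces the supremum over dynamical systems by one over fixed finite classes of functions on intervals $[0,Q+iN)$, so the exceptional $\omega$‑set is assembled from countably many $\omega$‑events alone. Among the ingredients above, the martingale piece carries the effective rate $O(1/v_N)$, hence is summable along every lacunary subsequence $I_\sigma$, so Borel--Cantelli yields a.e.-$x$ convergence along $I_\sigma$ for a.e.\ $\omega$; the gaps are filled exactly as around \cref{eq:116}, the increment between consecutive indices of $I_\sigma$ being $O\bigl((v_{N_{i+1}}-v_{N_i})/v_{N_i}\bigr)=O(\sigma-1)$ because $|T^{S_n}\widetilde F_1|\le\|\widetilde F_1\|_\infty$. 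The genuinely ergodic ingredients --- the Birkhoff average in the $U_n$‑part, and the coboundary case --- carry no rate but converge a.e.-$x$ for \emph{every} $\omega$ in the system‑independent full‑probability set where the strong law and $\sum_{n<N}\mathbbm u_nU_n=o(v_N)$ hold.

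I expect the heart of the matter to be resisting one natural but unhelpful move. Written à la \cref{eq:151}, the averages in \cref{eq:156} are genuinely random double averages $\tfrac1{\sum_{n<N}U_n}\sum_k T^kF_1\cdot T^{r_k}F_2$, whose a.e.\ convergence for a hypothesis as weak as $\mathbbm u_n\to0$, $\sum\mathbbm u_n=\infty$ we do not know --- and that is precisely what forces the much stronger growth condition \cref{eq:19} of \Cref{bigthm:3}. The approach above avoids this by keeping the deterministic weights $\mathbbm u_n$ throughout: the Banach‑principle packaging pushes the non‑linear factor $T^nF_2$ entirely into the easy coboundary case, and the one substantial step $\tfrac1{v_N}\sum\mathbbm u_nT^{S_n}\widetilde F_1\to0$ is driven by $\mathbbm u_n=U_n+(\mathbbm u_n-U_n)$, which separates an exact Birkhoff average from a martingale error of size $O(v_N^{-1/2})$. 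The only real friction I anticipate is the bookkeeping in the previous paragraph: cleanly separating the parts that come with an effective rate from those that do not, and making certain the exceptional $\omega$‑set never refers to the dynamical system.
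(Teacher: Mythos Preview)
Your overall architecture is close to the paper's --- split $F_2$ into invariant plus orthogonal, handle the orthogonal part via coboundaries, and for the invariant part reduce to the single claim $\tfrac1{v_N}\sum\mathbbm u_nT^{S_n}F_1\to\overline F_1$ via the splitting $\mathbbm u_n=U_n+(\mathbbm u_n-U_n)$ with the $U_n$-part an honest Birkhoff average. Two of your moves are different from, and cleaner than, what the paper does: your Abel-summation treatment of the coboundary case, driven by $\sum_{n<N}\mathbbm u_nU_n=o(v_N)$, replaces the paper's structural \Cref{lem:2} about zero-density sets; and your further decomposition $T^{S_n}\widetilde F_1=T^{S_{n-1}}\widetilde F_1+U_n(\cdots)$ is exactly what makes the $(\mathbbm u_n-U_n)$-piece a sum of genuine martingale differences --- a point the paper leaves implicit when it invokes Bernstein-type bounds on sums whose coefficients involve $S_n$.

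The genuine gap is in your upgrade of the martingale piece $\tfrac1{v_N}\sum_{n<N}(\mathbbm u_n-U_n)T^{S_{n-1}}\widetilde F_1$ to a system-independent full-probability set of $\omega$. Your second-moment bound $\mathbb E_\omega\|\cdot\|_{L^2(X)}^2\le C/v_N$ is correct and, along any lacunary $I_\sigma$, does yield a.e.-$x$ convergence for a.e.\ $\omega$; but that exceptional $\omega$-set depends on the system and on $F_1$. Calder\'on transference does replace the supremum over systems by a supremum over a finite class of integer functions $f$, but ``finite'' here means roughly $\exp\bigl(Q+2v_N/Q\bigr)$ many even after the residue-class reduction with $q=Q\approx v_N^{1/2}$ that the paper uses around \cref{eq:170}--\cref{eq:173}. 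A Chebyshev bound extracted from the $O(1/v_N)$ variance is polynomial in $v_N$ and cannot survive the union estimate over an exponential-size class. What you need instead is an exponential tail for each fixed $f,g$: since $(\mathbbm u_n-U_n)h(S_{n-1})$ are bounded martingale differences with predictable variance $\le\mathbbm u_n$, Azuma--Hoeffding or Freedman's inequality gives $P\bigl(|\tfrac1{v_N}\sum\cdots|>\epsilon_N\bigr)\le\exp(-c\epsilon_N^2 v_N)$, which \emph{does} beat the union over $\exp(c\sqrt{v_N})$ functions provided $\epsilon_N\gg v_N^{-1/4}$. With that replacement your argument goes through, and in fact your $S_{n-1}$-decomposition is precisely the device that makes the martingale exponential inequality legitimately applicable where the paper's sketch appeals to \Cref{cor:1} as if the coefficients were deterministic.
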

\begin{proof}
  We divide the proof into two cases: first, when $F_2$ is
  $T$-invariant, and second, when $F_2$ is orthogonal to all
  $T$-invariant functions. The function $F_1$ is an indicator
  throughout the proof.

  Let thus $F_2$ be $T$-invariant. We then need to prove the pointwise
  convergence of
  \begin{equation}
    \label{eq:157}
    \frac1{\sum_{n<N}\mathbbm u_n}\sum_{n<N}\mathbbm
    u_nT^{U_1+U_2+\dots+U_{n}}F_1.
  \end{equation}
  The a.e. convergence of the averages
  \begin{equation}\label{eq:158}
    \frac1{\sum_{n<N}U_n}\sum_{n<N}U_nT^{U_1+U_2+\dots+U_{n}}F_1
  \end{equation}
  to $\overline F_1$ follows from the pointwise ergodic theorem. To
  see this, let $r_1<r_2<\dots <r_k<\dots <r_K<N$ be all the elements
  of our random sequence which are less than $N$. Since
  $U_1,U_2,\dots$ is the indicator of the random sequence, $U_n=1$
  exactly when $n=r_k$ for some $k\le K$.  It follows that
  \begin{equation}
    \label{eq:159}
    \sum_{n<N}U_n=K,
  \end{equation}
  and
  \begin{equation}
    \label{eq:160}
    U_nT^{U_1+U_2+\dots+U_{n}}=
    \begin{dcases*}
      T^k& if $n=r_k$ for some $k\le K$,\\
      0& otherwise,
    \end{dcases*}
  \end{equation}
  so the average in \cref{eq:158} is the $K$th ergodic average
  $\frac1K\sum_{1\le k\le K} T^kF_1$.

  Since the averages in \cref{eq:158} converge a.e. to $\overline
  F_1$, by the strong law of large numbers, the averages
  \begin{equation}
    \label{eq:161}
    \frac1{\sum_{n<N}\mathbbm u_n}\sum_{n<N}U_nT^{U_1+U_2+\dots+U_{n}}F_1.
  \end{equation}
  also converge to $\overline F_1$. We then need to show that, with
  full probability, the difference between the two averages converges
  to $0$ a.e.
  \begin{equation}
    \label{eq:162}
    \lim_{N\to \infty} \sup_{(X,\mu,T),F_1} \abs*{\frac1{\sum_{n<N}\mathbbm
        u_n}\sum_{n<N}(U_n-\mathbbm u_n)T^{U_1+U_2+\dots+U_{n}}F_1}=0
    \text{ with probability }1.
  \end{equation}
  Let $\epsilon_N\to 0$. Later we'll find that the fastest rate our
  method allows $\epsilon_N$ to go to $0$ is
  $O\paren*{\paren*{\sum_{n<N}\mathbbm u_n}^{-1/4}}$ which is plenty,
  as we will soon see. We shall prove
  \begin{equation}
    \label{eq:163}
    P\,\paren*{
      \sup_{(X,\mu,T),F_1}\int_X\abs*{\frac1{\sum_{n<N}\mathbbm
          u_n}\sum_{n<N}(U_n-\mathbbm u_n)T^{U_1+U_2+\dots+U_{n}}F_1}>
      \epsilon_N}< \exp{\paren*{-c
        \epsilon_N^2\sum_{n<N}\mathbbm u_n}}
  \end{equation}
  for all large enough $N$.  This implies \cref{eq:162} if
  $\epsilon_N$ can be chosen so that, for all $\sigma>1$ we have
  \begin{align}
    \sum_{N\in I_\sigma}\epsilon_N&<\infty,\label{eq:164}\\
    \intertext{and} \sum_{N\in I_\sigma}\exp{\paren*{-c
        \epsilon_N^2\sum_{n<N}\mathbbm u_n}}&<\infty,\label{eq:165}
  \end{align}
  where $I_\sigma$ is defined in \cref{eq:149}. This is because we
  need to show convergence only for the subsequence $I_\sigma$.

  Following the proof of \Cref{bigthm:2}, a duality and a transference
  argument shows that \cref{eq:163} follows from
  \begin{multline}
    \label{eq:166}
    P\,\paren*{\sup_{f\in \mathcal{F},g\in
        \mathcal{G}}\frac1{\sum_{n<N}\mathbbm
        u_n}\sum_{n<N}(U_n-\mathbbm u_n)
      \frac1Q\sum_{a<Q}g(a)f\paren[\big]{a+\paren*{U_1+U_2+\dots+U_{n}}}
      >\epsilon_N}\\ < \exp{\paren*{-c
        \epsilon_N^2\sum_{n<N}\mathbbm u_n}} \text{ for all large
      enough }N,
  \end{multline}
  where $\mathcal{G}$ is the same as before, that is, it contains $\pm
  1$ valued functions supported on the interval $[0,Q)$
  \begin{equation}
    \label{eq:167}
    \mathcal{G}=\cbrace[\Big]{g\mid g:[0,Q)\to \cbrace*{-1,1}},
  \end{equation}
  and $\mathcal{F}$ contains indicators supported on the interval
  $\Bigl [0,Q+U_1+U_2+\dots+U_{N}\Bigr)$
  \begin{equation}
    \label{eq:168}
    \mathcal{F}=\cbrace*{f\mid f:\Bigl [0,Q+U_1+U_2+\dots+U_{N}\Bigr)\to
      \cbrace*{0,1}}.
  \end{equation}
  In fact, except on a set with small probability, we can replace the
  interval $[0,Q+U_1+U_2+\dots+U_{N}\Bigr)$ above, which depends on
  $\omega$, by the interval $\Bigl [0,Q+2\sum_{n<N}\mathbbm
  u_n\Bigr)$. This is because when we put $X_n=U_n-\mathbbm u_n$ and
  $t=\sum_{n<N}\mathbbm u_n$ in Bernstein's inequality, \Cref{lem:1},
  we get
  \begin{equation}
    \label{eq:169}
    P\,\paren*{U_1+U_2+\dots+U_{N}-\sum_{n<N}\mathbbm
      u_n>\sum_{n<N}\mathbbm u_n}< \exp{\paren*{-c\sum_{n<N}\mathbbm u_n}},
  \end{equation}
  and, of course, $\sum_{N\in
    I_\sigma}\exp{\paren*{-c\sum_{n<N}\mathbbm u_n}}<\infty$. So from
  now on, $\mathcal{F}$ denotes indicators on the interval $\Bigl
  [0,Q+2\sum_{n<N}\mathbbm u_n\Bigr)$
  \begin{equation}
    \label{eq:170}
    \mathcal{F}=\cbrace*{f\mid f:\Bigl [0,Q+2\sum_{n<N}\mathbbm u_n\Bigr)\to
      \cbrace*{0,1}}.
  \end{equation}

  The next step in our method is to divide up the interval $\Bigl
  [0,Q+2\sum_{n<N}\mathbbm u_n\Bigr)$ into residue classes $\pmod{q}$
  for an appropriate $q$. The same way as in previous proofs, we get
  that a good choice for $q$ is $q=Q$. We also get the requirement,
  using the cardinality of functions supported on a single residue
  class $\pmod q$ in $\Bigl [0,Q+2\sum_{n<N}\mathbbm u_n\Bigr)$ and
  the cardinality of $\mathcal{G}$, that
  \begin{equation}
    \label{eq:171}
    \frac{2\sum_{n<N}\mathbbm u_n}{q}+Q\le c\epsilon_N^2\sum_{n<N}\mathbbm u_n.
  \end{equation}
  Since $q=Q$, the left hand side of \cref{eq:171} is smallest when
  \begin{equation}
    \label{eq:172}
    q=Q=c\paren*{\sum_{n<N}\mathbbm u_n}^{1/2}.
  \end{equation}
  In the above, nothing changes significantly, if we choose the
  constant $c$ so that $Q$ is an integer. Substituting \cref{eq:172}
  into \cref{eq:171}, we get
  \begin{equation}
    \label{eq:173}
    \epsilon_N=c\paren*{\sum_{n<N}\mathbbm u_n}^{-1/4}.
  \end{equation}
  With these values all our requirements are satisfied including
  \cref{eq:164,eq:165}, and our proof of \cref{eq:162} is finished.

  Our next step in proving \Cref{thm:1} is to consider the case when
  $F_2$ is orthogonal to the invariant functions.  Since in this case,
  $\overline F_2=0$, we want to prove
  \begin{equation}
    \label{eq:174}
    \frac1{\sum_{n<N}\mathbbm
      u_n}\sum_{n<N}\mathbbm
    u_nT^{U_1+U_2+\dots+U_{n}}F_1T^nF_2\to 0.
  \end{equation}
  By a maximal inequality argument---similar but simpler than the one
  we used to extend \Cref{bigthm:4} to bounded functions---it is
  enough to prove \cref{eq:172} for a class of functions which
  generates an $L^2$-dense linear subspace of the orthocomplement of
  the $T$-invariant functions. This class of functions will be the
  ``bounded'' coboundaries, so those functions $F_2$ which can be
  written in the form $F_2=TG-G$ for some bounded $G$.

  So let $F_2$ be a coboundary. We shall prove a much more general
  result: The only property of the sequence $\cbrace*{
    T^1F_2(x),T^2F_2(x),\dots}$ we use is that
  $\frac1N\sum_{L<n<L+N}T^nF_2(x)$ converges to $0$ \emph{uniformly}
  in $L$ as $N\to\infty$. The only property of the random sequence
  $\cbrace*{r_1<r_2<r_3<\dots}$ we use is that it has $0$ density as a
  consequence of the assumption that $\mathbbm u_n\to 0$.
  \begin{lem}
    \label{lem:2}
    Suppose the set $R$ of positive integers has 0 density, and let
    $X(n)$ denote its indicator. Let $\cbrace*{g_1,g_2,g_3,\dots}$ be
    a bounded sequence of real numbers which satisfy
    \begin{equation}
      \label{eq:175}
       \lim_{L\to\infty}\frac1L \sum_{M\le n <
        M+L }g_n = 0\text{\quad uniformly in }M.
    \end{equation}
  
    Then for any bounded sequence $f_1,f_2,f_3\dots$ of numbers and
    decreasing sequence $w_1\ge w_2\ge w_3\ge \dots$ of weights with
    $\sum_n w_n=\infty$ we have
    \begin{equation}
      \label{eq:176}
      \lim_{N\to\infty}\frac1{\sum_{n<N}w_n}\sum_{n< N} w_nf_{X(1)+X(2)+\dots+X(n)}g_n = 0.
    \end{equation}
  \end{lem}
  \begin{proof}
    A summation by parts argument shows that we can assume all the
    weights $w_n$ are equal to $1$, so we need to show
    \begin{equation}
      \label{eq:177}
      \frac1{N}\sum_{n< N} f_{X(1)+X(2)+\dots+X(n)}g_n\to 0.
    \end{equation}
    The main observation is that the complement of $R$ is
    \emph{basically} a set which is the union of longer and longer
    intervals. More precisely, for any given length $L$, there is a
    $S\subset R^c$ of density $1$ so that $S$ is the union of
    intervals of length at least $L$. To see this, suppose to the
    contrary: there is a length $L$ so that the set of $k$,
    $k=1,2,3,\dots$, for which the interval $[kL,(k+1)L)$ contains a
    point from $R$ has positive upper density, say, $\delta >0$.  But
    then the upper density of $R$ is at least $\delta/L$, a
    contradiction.

    To prove the lemma, let $L$ be arbitrary, and consider the average
    \begin{equation}
      \frac1{KL} \sum_{k\le K}\sum_{kL\le n<(k+1)L} f_{X(1)+X(2)+\dots+X(n)}g_n.
    \end{equation}
    Let $k_1<k_2<\dots$ be a sequence so that $R\subset \cup_i
    [k_iL,(k_i+1)L)$, and the density of $(k_i)$ is 0. Write
    \begin{equation}
    \begin{multlined}
      \frac1{KL} \sum_{k\le K}\sum_{kL\le n <(k+1)L }  f_{X(1)+X(2)+\dots+X(n)}g_n\\
      =\frac1{KL} \sum_{i}\sum_{k_iL\le n<(k_i+1)L}
      f_{X(1)+X(2)+\dots+X(n)}g_n + \frac1{KL} \sum_{k\ne
        k_i}\sum_{kL\le n<(k+1)L} f_{X(1)+X(2)+\dots+X(n)}g_n.
    \end{multlined}\label{eq:178}
  \end{equation}

    The first sum above goes to 0, since $(k_i)$ has 0 density and the
    functions are assumed to be bounded.  For the second sum, note
    that on each interval $ [kL,(k+1)L)$, we have $X(n)=0$, hence
    $f_{X(1)+X(2)+\dots+X(n)}$ is constant.  We can estimate
    \begin{equation}
      \label{eq:179}
      \abs*{\frac1{KL} \sum_{k\ne k_i}\sum_{kL\le n<(k+1)L}
        f_{X(1)+X(2)+\dots+X(n)}g_n}\le \frac1{K}\sum_{k\ne k_i}\abs*{\frac1{L} \sum_{kL\le n<(k+1)L}g_n}.
    \end{equation}
    By the assumption in \cref{eq:175}, $\abs*{\frac1{L} \sum_{kL\le
        n<(k+1)L}g_n}$ is uniformly small if $L$ is big, finishing the
    proof of the lemma.
  \end{proof}
  This ends the proof of \Cref{thm:1}.
\end{proof}
Our last task in the proof of \Cref{bigthm:3} is to prove that with
probability $1$ we have, in every dynamical system,
\begin{equation}
  \label{eq:180}
  \lim_{N\to\infty}\frac1{\sum_{n<N}\mathbbm u_n}\sum_{n<N}(U_n-\mathbbm u_n)T^{U_1+U_2+\dots+U_{n}}F_1(x)T^nF_2(x)
  = 0\text{ for almost every }x. 
\end{equation}
Let $\delta>0$ and
\begin{equation}
  \label{eq:181}
  \epsilon_N = \paren*{\log\sum_{n<N}\mathbbm
    u_n}^{-(1+\delta)}.
\end{equation}
As many times before in this paper, our task is reduced to proving an
inequality for the probability of averages on the integers
\begin{equation}
\begin{multlined} 
  P\,\paren*{\sup_{f_i\in \mathcal F_i, g\in \mathcal{G}}
    \frac1{\sum_{n<N}\mathbbm u_n}\sum_{n<N}(U_n-\mathbbm
    u_n)\frac{1}{Q}\sum_{a<Q}g(a)f_1\paren[\Big]{a+\paren[\big]{U_1+U_2+\dots+U_{n}}}f_2(a+n)
    >\epsilon_N}\\
  \le \exp{\paren*{ -c\epsilon_N^2 \sum_{n<N}\mathbbm u_n}},
  \text{ for all large enough }N,
\end{multlined}\label{eq:182}
\end{equation}
where $Q$ is to be determined momentarily, $\mathcal{F}_1$ contains
all the indicators supported in the interval\footnote{We already
  showed in \cref{eq:169} that we make only a small error in
  probability, if we replace the random interval
  $\Bigl[0,Q+\sum_{n<N}U_n\Bigr)$ by the deterministic one $\Bigl[0,Q+2\sum_{n<N}\mathbbm
  u_n\Bigr)$.}  $\Bigl[0,Q+2\sum_{n<N}\mathbbm u_n\Bigr)$
\begin{align}
  \mathcal{F}_1&=\cbrace*{f\mid f:\Bigl[0,Q+2\sum_{n<N}\mathbbm
    u_n\Bigr)\to \cbrace*{0,1}}, \intertext{$\mathcal{F}_2$ contains
    all the indicators supported in the interval $[0,Q+N)$,}
  \mathcal{F}_2&=\cbrace[\Big]{f\mid f:[0,Q+N)\to \cbrace*{0,1}},
  \intertext{and $\mathcal{G}$ contains all the $\pm 1$ valued
    functions supported on the interval $[0,Q)$}
  \mathcal{G}&=\cbrace[\Big]{g\mid g:[0,Q)\to \cbrace*{-1,1}}.
\end{align}

We now choose the relatively prime moduli $q_1,q_2$, and we divide up
the support of $f_i$ according to the residue classes $\pmod{q_i}$.
With the assumption that
\begin{equation}
  \label{eq:183}
  q_1q_2= Q,
\end{equation}
we further reduce the problem to the case when the $f_i$ are supported
on a fixed $r_i$ residue class $\pmod{q_i}$.  This way, following the
proof of \Cref{bigthm:1}, we get the estimate
\begin{multline}
  \label{eq:184}
  P\,\paren*{\sup_{f_i\in \mathcal F_i, g\in \mathcal{G}}
    \frac1{\sum_{n<N}\mathbbm u_n}\sum_{n<N}(U_n-\mathbbm
    u_n)\frac{1}{Q}\sum_{a<Q}g(a)
    f_1\paren[\Big]{a+\paren[\big]{U_1+U_2+\dots+U_{n}}}f_2(a+n)
    >\epsilon_N}\\
  \le q_2q_1\exp{\paren*{ Q+ \frac{\sum_{n<N}\mathbbm
        u_n}{q_1}+\frac{N}{q_2}-\frac14\epsilon_N^2 \sum_{n<N}\mathbbm
      u_n}}.
\end{multline}
Ignoring the negligible $q_1q_2<\exp{\paren*{c\log N}}$ product
(negligible compared to $\exp{\paren*{\epsilon_N^2 \sum_{n<N}\mathbbm
    u_n}}$, which is assumed to be $\exp{\paren*{O(N^{1/2})}}$), we
see our task is to choose the parameters $Q,q_1,q_2$ so that
\begin{equation}
  \label{eq:185}
  Q+ \frac{\sum_{n<N}\mathbbm
    u_n}{q_1}+\frac{N}{q_2}< c\epsilon_N^2 \sum_{n<N}\mathbbm u_n.
\end{equation}
If $Q$ and $q_i$ were real variables, the left hand side is minimal if
\begin{align}
  \label{eq:186}
  Q&= c\epsilon_N^2 \sum_{n<N}\mathbbm u_n,\\
  q_1&=c\epsilon_N^{-2},\\
  q_2&=c\frac{N}{\epsilon_N^2 \sum_{n<N}\mathbbm u_n}.
\end{align}
We certainly can choose integer values for $Q,q_i$ so that they are
within constant multiples of the above optimal values, $q_1$ and $q_2$
are coprimes, and $q_1q_2= Q$.  Using that $q_1q_2= Q$, we get the
requirement
\begin{equation}
  \label{eq:187}
  \frac{N}{\epsilon_N^4 \sum_{n<N}\mathbbm u_n}\le c \epsilon_N^2 \sum_{n<N}\mathbbm u_n,
\end{equation}
hence
\begin{equation}
  \label{eq:188}
  N^{1/2}\le c\epsilon_N^3 \sum_{n<N}\mathbbm u_n,
\end{equation}
which is possible, since the value of $\epsilon_N$ in \cref{eq:181} is
compatible with the assumption in \cref{eq:19}.  With this, our proof
is finished.



\section{Notes}
\label{sec:notes}

\subsection{Finitary version of \Cref{bigthm:1}}
\label{sec:finit-vers-crefb}

We use the setup commonly used in combinatorics: for a given $N$, we
want to select each integer $1,2,\dots,N$ with a uniform probability
$\mathbbm u_N$. So for each $N$, we take $U_{N1},U_{N2},\dots,U_{NN}$,
a sequence of independent, identically distributed, $0-1$-valued
random variables, with the law
\begin{equation}
  \label{eq:189}
  P(U_{Nn}=1)=1-P(U_{Nn}=0)= \mathbbm u_N,\quad n=1,2,\dots,N.
\end{equation}
The sequence $U_{N1},U_{N2},\dots,U_{NN}$ is the indicator of the set
$R^\omega_N$,
\begin{equation}
  \label{eq:190}
  R^\omega_N=\cbrace[\big]{ n| U_{Nn}(\omega)=1}.
\end{equation}
The proof of \Cref{bigthm:1} can easily be adjusted to give
\begin{thm*}[Finitary version of \Cref{bigthm:1}]
  Let $\ell$ be a positive integer and assume
  \begin{equation}
    \label{eq:191}
    \mathbbm u_N\cdot N^{1/(\ell +1)}\to\infty.
  \end{equation}

  Then there is a set $\Omega'\subset \Omega$ with $P(\Omega')=1$ such
  that for every $\omega\in\Omega'$ the following is true: for all
  $\alpha>0$ there is $N(\alpha)$ so that if $A\subset [1,N]$ for some
  $N>N(\alpha)$ and $A$ has more than $\alpha N$ elements, then $A$
  contains an arithmetic progression $a,a+r,a+2r,\dots,a+\ell r$ of
  length $(\ell +1)$ with some $r\in R^\omega_N$. 
\end{thm*}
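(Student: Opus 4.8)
The plan is to run the proof of \Cref{bigthm:1} essentially verbatim, with two adjustments: the infinitary recurrence input of Furstenberg (\cref{eq:45}, \cref{eq:77}) is replaced by the \emph{finitary supersaturation} form of Szemer\'edi's theorem, and the exponential bounds are checked to be summable in $N$, so that a single application of the Borel--Cantelli lemma produces the set $\Omega'$. Throughout, for $A\subseteq[1,N]$ write
\[
  D_N(A,r)=\frac1N\,\bigl|\{\,a\in[1,N]:a,a+r,\dots,a+\ell r\in A\,\}\bigr|,
\]
so that $N\sum_{r=1}^{N}D_N(A,r)$ is the number of nontrivial $(\ell+1)$-term arithmetic progressions contained in $A$, and the quantities we must control are the averages $\frac1{\mathbbm u_N N}\sum_{r=1}^{N}U_{Nr}\,D_N(A,r)$.

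The counting input I would use is: for every $\alpha>0$ and every $\ell$ there are $c(\alpha,\ell)>0$ and $N_1(\alpha,\ell)$ such that $|A|>\alpha N$ and $N>N_1(\alpha,\ell)$ force $\frac1N\sum_{r=1}^N D_N(A,r)\ge 2\,c(\alpha,\ell)$. This is the usual averaging-over-affine-copies (Varnavides) consequence of Szemer\'edi's theorem, and here — unlike in the infinitary proof — no common-subsequence refinement is needed, since $D_N(A,r)$ is a plain finite count. Since $\mathbb{E}\bigl[\sum_{r}U_{Nr}D_N(A,r)\bigr]=\mathbbm u_N\sum_r D_N(A,r)\ge 2c(\alpha,\ell)\,\mathbbm u_N N$, it suffices to prove the uniform deviation estimate: for a suitable deterministic $\epsilon_N\to 0$,
\[
  P\Bigl(\sup_{A\subseteq[1,N]}\Bigl|\tfrac1{\mathbbm u_N N}\sum_{r=1}^{N}(U_{Nr}-\mathbbm u_N)\,D_N(A,r)\Bigr|>\epsilon_N\Bigr)\le \exp\!\bigl(-c\,\epsilon_N^{2}\,\mathbbm u_N N\bigr)
\]
for all large $N$, with the right-hand side summable in $N$.

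To prove this I would repeat the argument of \Cref{sec:proof-crefbigthm:1}: padding $A$ with zeros and splitting $[1,N]$ into $\lceil N/Q\rceil$ blocks of length $Q$ (which introduces at worst a factor $2$ in front of the supremum, since $Q\lceil N/Q\rceil\le 2N$), so that on each block $D_N(A,r)$ becomes an order-$\ell$ correlation $\frac1Q\sum_{b<Q}f_0(b)f_1(b+r)\cdots f_\ell(b+\ell r)$ with $f_i\colon[0,Q+iN)\to\{0,1\}$; then restricting the supports of $f_1,\dots,f_\ell$ to single residue classes modulo pairwise coprime primes $q_1,\dots,q_\ell\in\bigl(N^{1/(\ell+1)},2N^{1/(\ell+1)}\bigr)$ with $Q=q_1\cdots q_\ell$, so that by the Chinese remainder theorem each correlation is bounded by $1$; and finally applying \Cref{cor:1} (whose proof only uses independence of the $0$–$1$ variables, here with $\mathbbm u_n\equiv\mathbbm u_N$) together with the union bound. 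The number of surviving tuples $(f_0,\dots,f_\ell)$, times the number of block and residue choices, is $\exp\!\bigl(O(N^{1-1/(\ell+1)})\bigr)$ — this is the only place the exponent $1/(\ell+1)$ enters, and the computation is identical to the one culminating in \cref{eq:81}.

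It remains to pick $\epsilon_N$. Writing $\kappa_N:=\mathbbm u_N N^{1/(\ell+1)}\to\infty$ by \cref{eq:191}, take $\epsilon_N=\kappa_N^{-1/4}$; then $\epsilon_N\to 0$ while $\epsilon_N^{2}\,\mathbbm u_N N=\kappa_N^{1/2}\,N^{1-1/(\ell+1)}$ eventually dominates the $O(N^{1-1/(\ell+1)})$ in the exponent, so the probability above is $\le\exp\!\bigl(-c'\,N^{1-1/(\ell+1)}\bigr)$ for large $N$; since $1-\tfrac1{\ell+1}\ge\tfrac12$, this is summable in $N$. Borel--Cantelli yields a full-probability set $\Omega'$ and, for each $\omega\in\Omega'$, an $N(\omega)$ such that $\sup_A|\cdots|\le\epsilon_N$ for all $N\ge N(\omega)$. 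Given $\alpha>0$, set $N(\alpha)=\max\{\,N(\omega),\,N_1(\alpha,\ell),\,\min\{N:\epsilon_N<c(\alpha,\ell)\}\,\}$; for $N>N(\alpha)$ and $A\subseteq[1,N]$ with $|A|>\alpha N$ we get
\[
  \frac1{\mathbbm u_N N}\sum_{r=1}^{N}U_{Nr}\,D_N(A,r)\ \ge\ 2c(\alpha,\ell)-\epsilon_N\ >\ c(\alpha,\ell)\ >\ 0,
\]
so some $r\le N$ has $U_{Nr}=1$, i.e.\ $r\in R_N^\omega$, and $D_N(A,r)>0$, i.e.\ $A$ contains a progression $a,a+r,\dots,a+\ell r$. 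I expect the main obstacle to be purely organizational: stating the finitary supersaturation input in exactly this averaged form, and being careful that $N(\alpha)$ is permitted to depend on $\omega$. Nothing delicate happens at the block boundaries, because the correlation bound used in the union step needs only that at most one $a\in[0,Q)$ solves the CRT system, which truncating the $f_i$ to $[1,N]$ does not affect.
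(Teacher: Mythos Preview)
Your proposal is correct and follows essentially the same route the paper indicates (``the proof of \Cref{bigthm:1} can easily be adjusted''): the block decomposition, the CRT thinning of the supports, Bernstein plus the union bound, and Borel--Cantelli are all carried over verbatim, with Furstenberg's recurrence input replaced by the finitary Varnavides/supersaturation form of Szemer\'edi and the constant sequence $\mathbbm u_{Nn}\equiv\mathbbm u_N$ making the summation-by-parts step trivial. Your handling of the boundary block and of the $\omega$-dependence of $N(\alpha)$ is fine.
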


\subsection{Our method for bounded functions}
\label{sec:bounded-functions}

In our paper, we chose to extend almost everywhere convergence results
from indicators to bounded functions using the method of maximal
inequalities. Our method can be applied directly to bounded functions,
though. The modification in the proof is needed only in the definition
of the sets $\mathcal{F}$.  For example, consider the proof of
\Cref{bigthm:4} in the $\ell=1$ case. We need to consider the averages
\begin{equation}
  \label{eq:192}
  \frac1{\sum_{n<N}\mathbbm u_n }\sum_{n<N}\paren*{U_n -\mathbbm u_n} \frac1Q\sum_{a<Q}f(a+n)
\end{equation}
for $f:[0,Q+N)\to[-1,1]$.  As a first step, we want to replace $f$ by
a function which take on values from a sufficiently dense
\emph{discrete subset} of the interval $[0,1]$. Assume, for notational
simplicity, that $\epsilon_N$ is the reciprocal of a positive
integer. A dense enough set is the set
\begin{equation}
  \label{eq:193}
  D=\cbrace*{-1, -\paren*{\epsilon_N^{-1}
      -1}\epsilon_N,\dots -2\epsilon_N, -\epsilon_N,0,\epsilon_N,2\epsilon_N,\dots, \paren*{\epsilon_N^{-1}
      -1}\epsilon_N, 1},
\end{equation}
so we define
\begin{equation}
  \label{eq:194}
  \mathcal{F}=\cbrace[\Big]{f\mid f:[0,Q+N)\to  D}.
\end{equation}
Now, if we replace $f$ in \cref{eq:192} by a function from
$\mathcal{F}$, then with probability very close to $1$, we only make a
$2\epsilon_N$ error.  It follows that it is enough to consider
functions from $\mathcal{F}$ in \cref{eq:192}.  The cardinality of
$\mathcal{F}$ is not much bigger than it was before: since
$|D|=c\epsilon_N^{-1}$, we have
\begin{equation}
  \label{eq:195}
  |\mathcal{F}|< |D|^{2N}< \exp{\paren*{cN\log \epsilon_N^{-1}}}.
\end{equation}
Since in \Cref{bigthm:4}, $ \epsilon_N$ is a negative power of $\log
N$, this extra $\log \epsilon_N^{-1}$ factor would account for an
extra $\log\log N$ speed---negligible as the speed in \Cref{bigthm:4}
is stated, but it would not be negligible if we wanted to state the
best possible speed our proof could give.  For example, it's clear
from the proof that we can weaken the assumption in \cref{eq:130} of
\Cref{bigthm:4} to
\begin{equation}
  \label{eq:196}
  \mathbbm u_n\cdot \frac{n^{1/2}}{(\log n)^{2}\log\log ^{2+\delta}n}\to
  \infty\text{ for some } \delta>0.
\end{equation}
But if we used our method directly to bounded functions instead of
maximal inequalities we would have needed the stronger
\begin{equation}
  \label{eq:197}
  \mathbbm u_n\cdot \frac{n^{1/2}}{(\log n)^{2}\log\log ^{3+\delta}n}\to
  \infty\text{ for some } \delta>0
\end{equation}
assumption.

This is the reason why we decided to use the well-known maximal
inequality techniques, and hence we avoided the slight inefficiency
and complications in our method when dealing with bounded functions
instead of indicators.

\subsection{Convexity methods}
\label{sec:convexity-methods}

We can also use the more geometric \emph{convexity} methods to extend
almost sure convergence results from indicators to bounded functions.
This method is signified by the Krein-Milman theorem, though we need
only the simplest version of it for finite sets.  Let us briefly give
here the representation needed to do the extension from indicators to
nonnegative functions bounded by 1\footnote{Similar representation
  applies for extending results for $\pm 1$-valued functions to
  functions bounded by $1$.}. Let
\begin{align}
  \label{eq:198}
  \Gamma&=\cbrace[\Big]{g|g:[1,K]\to \cbrace*{0,1}}=\cbrace*{0,1}^K,\\
  \mathcal{F}&=\cbrace[\Big]{f|f:[1,K]\to [0,1]}=[0,1]^K.
\end{align}
We want to show that for each $f\in \mathcal{F}$ there is a
probability measure $\mu_f$ on $\Gamma$ so that
\begin{equation}
  \label{eq:199}
  f(b)=\int_\Gamma g(b) d\mu_f(g).
\end{equation}
The requirement of the representation uniquely determines the
measure. For any $b$, \cref{eq:199} looks like
\begin{equation}
  \label{eq:200}
  f(b)=1\cdot \mu_f\cbrace[\big]{g|g(b)=1}.
\end{equation}
It follows
\begin{align}
  \label{eq:201}
  \mu_f\cbrace[\big]{g|g(b)=1}&=f(b),\\
  \mu_f\cbrace[\big]{g|g(b)=0}&=1-f(b).
\end{align}
Since the sets $\mu\cbrace[\big]{g|g(b)=1}$ and
$\mu\cbrace[\big]{g|g(b)=0}$, $b=1,2,\dots,K$ are all the cylinder
sets of $\Gamma=\cbrace*{0,1}^K$, the measure $\mu_f$ is uniquely
determined as a product measure.

With this representation, it's a simple exercise to extend, say,
\cref{eq:98}, from indicators to nonnegative functions bounded by $1$.

There is only a slight complication when we have to extend results for
multiple averages from indicators to bounded functions.  For example,
for double averages, we need to be able to represent the product
$f_1(b_1)f_2(b_2)$ in an integral form.  This can be done using the
representation in \cref{eq:199} for $f_1$ and $f_2$ separately and
Fubini's theorem
\begin{align}
  \label{eq:202}
  f_1(b_1)f_2(b_2)&=\paren*{\int_{\mathcal{G}}g(b_1)d\mu_{f_1}(g)}\paren*{\int_{\mathcal{G}}g(b_2)d\mu_{f_2}(g)}
  \\
  &=\int_{\mathcal{G}\times \mathcal{G}}
  g_{1}(b_1)g_{2}(b_2)d\paren*{\mu_{f_1}\times\mu_{f_2}}(g_1,g_2),
\end{align}
which is just a fancy (but useful) way of writing
\begin{equation}
  \label{eq:203}
  f_1(b_1)f_2(b_2)=\mu_{f_1}\cbrace[\big]{g|g(b_1)=1}\cdot \mu_{f_2}\cbrace[\big]{g|g(b_2)=1}.
\end{equation}

\subsection{Extension to commuting transformations}
\label{sec:extens-comm-transf}

In short: our method in its present form doesn't work for commuting
transformations.  For example, we have
\begin{uproblem}
  Let $b<1/(\ell +1)$ and $\mathbbm u_n=n^{-b}$.

  Show that with probability $1$ in any dynamical system, for any
  commuting transformations $T_1,T_2,\dots,T_\ell$, the averages
  \begin{equation}
    \label{eq:204}
    \frac1{\sum_{n<N}\mathbbm u_n}\sum_{n<N} U_n\cdot
    T_1^nF_1\cdot T_2^{n}F_2\cdot\dots \cdot T_\ell^{n}F_\ell
  \end{equation}
  converge in $L^1$ norm.
\end{uproblem}
The good news is that the extension of \Cref{bigthm:3} to commuting
transformations is true. In other words, the semirandom averages
\begin{equation}
  \label{eq:205}
  \frac1N\sum_{n<N}T_1^nF_1(x)\cdot T_2^{r_n}F_2(x)
\end{equation}
do converge almost everywhere for commuting measure preserving
transformations $T_i$ as long as the random sequence
$\cbrace*{r_1<r_2<r_3<\dots}$ satisfies $r_n/n^{2-\epsilon}$ for some
positive $\epsilon$.  The proof of this will be discussed in an
upcoming paper.

\appendix

\section{Strong law of large numbers}
\label{sec:strong-law-large}
Here we give a quick proof of the strong law of large numbers in a
form needed in our paper.  The result is due to Kolmogorov.
\begin{thm}[Strong law of large numbers for uniformly bounded rv's]
  \label{thm:2}
  Suppose the random variables $X_1,X_2,\dots$ are non-negative,
  uniformly bounded
  \begin{equation}
    \label{eq:206}
    0\le X_n\le c,
  \end{equation}
  (pairwise) uncorrelated
  \begin{equation}
    \label{eq:207}
    \mathbbm E X_nX_m=\mathbbm E X_n\cdot \mathbbm E X_m, \text{ for
    }
    n\ne m,
  \end{equation}
  and dissipative
  \begin{equation}
    \label{eq:208}
    \sum_N \mathbbm E X_n=\infty.
  \end{equation}

  Then
  \begin{equation}
    \label{eq:209}
    \lim_{N\to\infty}\frac1{\sum_{n< N} \mathbbm EX_n}\sum_{n<N} X_n= 1 \text{ with
      probability } 1.
  \end{equation}

\end{thm}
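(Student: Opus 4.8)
The plan is to run the classical Kolmogorov argument: a variance bound gives convergence in probability via Chebyshev, and then a lacunary subsequence together with Borel--Cantelli and a monotonicity (``gap-filling'') step, using $X_n\ge 0$, upgrades this to almost-sure convergence.

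First I would write $S_N=\sum_{n<N}X_n$ and $s_N=\mathbbm E S_N=\sum_{n<N}\mathbbm E X_n$; by the dissipativity hypothesis $s_N\to\infty$. Since the $X_n$ are pairwise uncorrelated, $\operatorname{Var}(S_N)=\sum_{n<N}\operatorname{Var}(X_n)\le\sum_{n<N}\mathbbm E X_n^2$, and because $0\le X_n\le c$ we have $X_n^2\le cX_n$, so $\operatorname{Var}(S_N)\le c\,s_N$. Chebyshev's inequality then yields, for every $\epsilon>0$,
\[
  P\bigl(\,|S_N-s_N|>\epsilon s_N\,\bigr)\le\frac{\operatorname{Var}(S_N)}{\epsilon^2 s_N^2}\le\frac{c}{\epsilon^2 s_N}\longrightarrow 0,
\]
which already gives $S_N/s_N\to 1$ in probability.

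To get almost-sure convergence I would pass to a subsequence. Let $N_k$ be the least index $N$ with $s_N\ge k^2$; this is well defined and $N_k\to\infty$ since $s_N\to\infty$, and since each summand $\mathbbm E X_n$ is at most $c$ the overshoot is bounded, $k^2\le s_{N_k}<k^2+c$, i.e. $s_{N_k}=k^2+O(1)$. Applying the displayed bound along this subsequence gives $\sum_k P(|S_{N_k}-s_{N_k}|>\epsilon s_{N_k})\le\sum_k c/(\epsilon^2 k^2)<\infty$, so by Borel--Cantelli, and running $\epsilon$ through a sequence tending to $0$, $S_{N_k}/s_{N_k}\to 1$ almost surely. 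Finally, for $N_k\le N<N_{k+1}$ the non-negativity of the $X_n$ forces $S_{N_k}\le S_N\le S_{N_{k+1}}$, hence
\[
  \frac{s_{N_k}}{s_{N_{k+1}}}\cdot\frac{S_{N_k}}{s_{N_k}}\le\frac{S_N}{s_N}\le\frac{s_{N_{k+1}}}{s_{N_k}}\cdot\frac{S_{N_{k+1}}}{s_{N_{k+1}}};
\]
since $s_{N_{k+1}}/s_{N_k}=\bigl((k+1)^2+O(1)\bigr)/\bigl(k^2+O(1)\bigr)\to 1$, both outer bounds tend to $1$ almost surely, and the theorem follows.

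The main obstacle is the choice of subsequence: it must be sparse enough that the Chebyshev estimates are summable, yet dense enough that consecutive partial expectations $s_{N_k}$ stay asymptotically proportional; the choice $s_{N_k}\approx k^2$ does both at once. The uniform bound $0\le X_n\le c$ is used twice---to pass from $\mathbbm E X_n^2$ to $\mathbbm E X_n$ in the variance estimate, and to bound the overshoot $s_{N_k}-k^2$---while non-negativity is precisely what makes the gap-filling monotonicity valid.
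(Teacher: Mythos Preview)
Your proof is correct and follows essentially the same route as the paper: the same subsequence $N_k$ with $s_{N_k}\approx k^2$, the same variance bound $\operatorname{Var}(S_N)\le c\,s_N$ from uncorrelatedness and $X_n^2\le cX_n$, and the same monotonicity gap-filling using $X_n\ge 0$. The only cosmetic difference is that the paper, instead of Chebyshev plus Borel--Cantelli, shows directly that $\sum_k\mathbbm E\bigl(S_{N_k}/s_{N_k}-1\bigr)^2<\infty$ and concludes that the terms go to zero almost surely; the underlying second-moment computation is identical.
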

For an example of two random variables which are nonnegative,
uncorrelated but not independent, take $X_1(x)=1+\sin(2\pi x)$ and
$X_2(x)=1+\sin(2\pi 2x)$ defined on the unit interval $[0,1]$.
(Looking at the graph of the two functions, by inspection we can find
a $\lambda$, $0<\lambda<2$, so that the level sets
$\cbrace*{X_1>\lambda}$ and $\cbrace*{X_2>\lambda}$ are nonempty, but
disjoint.)
\begin{proof}
  We want to use a subsequence argument. Define the set
  $I=\cbrace*{N_1<N_2<\dots<N_i<\dots}$ of indices by
  \begin{equation}
    \label{eq:210}
    \sum_{n<N_i} \mathbbm EX_n \ge i^2 \text{ but }
    \sum_{n<N} \mathbbm EX_n < i^2 \text{ for }N<N_i.
  \end{equation}
  In words: $N_i$ is the smallest index $N$ for which $\sum_{n<N}
  \mathbbm EX_n\ge i^2$.  That $I$ is well defined and infinite
  follows from the dissipative assumption $\sum_N \mathbbm
  EX_n=\infty$. Since the $X_n$ are uniformly bounded, we have
  \begin{equation}
    \label{eq:211}
    \sum_{n<N_i} \mathbbm EX_n=i^2+O(1).
  \end{equation}
  We first want to show that we have a.e. convergence along the
  subsequence $I$
  \begin{equation}
    \label{eq:212}
    \lim_{i\to\infty}\frac1{\sum_{n< N_i} \mathbbm EX_n}\sum_{n<N_i} X_n= 1 \text{ with
      probability } 1.
  \end{equation}
  We show this in the form
  \begin{equation}
    \label{eq:213}
    \sum_{i} \paren*{\frac1{\sum_{n<N_i} \mathbbm E X_{n}}\sum_{n<N_i}
      X_{n}-\mathbbm E X_{n}}^2<\infty \text{ with probability }1.
  \end{equation}
  This follows if the above series is integrable, so from
  \begin{equation}
    \label{eq:214}
    \mathbbm E\sum_{i} \paren*{\frac1{\sum_{n<N_i} \mathbbm E X_{n}}\sum_{n<N_i}
      X_{n}-\mathbbm E X_{n}}^2=\sum_{i}  \mathbbm E\paren*{\frac1{\sum_{n<N_i} \mathbbm E X_{n}}\sum_{n<N_i}
      X_{n}-\mathbbm E X_{n}}^2<\infty. 
  \end{equation}
  A consequence of the multiplicativity assumption in \cref{eq:207} is
  orthogonality
  \begin{equation}
    \mathbbm E(X_{n}-\mathbbm E X_{n})(X_{m}-\mathbbm E
    X_{m})=0\text{ for }n\ne m,\label{eq:215}  
  \end{equation}
  and hence we have
  \begin{equation}
    \label{eq:216}
    \mathbbm E\paren*{\sum_{n<N_i}
      X_{n}-\mathbbm E X_{n}}^2 =\sum_{n<N_i}\mathbbm E (X_{n}-\mathbbm E X_{n})^2.
  \end{equation}
  Since $\mathbbm E (X_{n}-\mathbbm E X_{n})^2\le \mathbbm E
  X_{n}^2\le c\mathbbm E X_{n}$, we have
  \begin{equation}
    \label{eq:217}
    \mathbbm E\paren*{\frac1{\sum_{n<N_i} \mathbbm E X_{n}}\sum_{n<N_i}
      X_{n}-\mathbbm E X_{n}}^2\le \frac {c}{\sum_{n<N_i} \mathbbm E X_{n}},
  \end{equation}
  which, by \cref{eq:211}, establishes \cref{eq:214}.

  We now want to show that the $N$th average is close to the $N_i$th
  if $N_i\le N<N_{i+1}$. We can estimate, since the $X_n$ are
  non-negative,
  \begin{align}
    \label{eq:218}
    \frac1{\sum_{n<N} \mathbbm EX_n}\sum_{n<N} X_n&\le
    \frac1{\sum_{n<N_i} \mathbbm EX_n}\sum_{n<N_{i+1} } X_n\\
    &=\frac{\sum_{n<N_{i+1} } \mathbbm EX_n}{\sum_{ n< N_{i} }\mathbbm
      EX_n}\cdot \frac1{\sum_{n<N_{i+1} } \mathbbm EX_n} \sum_{
      n<N_{i+1}} X_n.
  \end{align}
  This implies, by \cref{eq:212} and \cref{eq:211},
  \begin{align}
    \label{eq:219}
    \limsup_{N\to\infty}\frac1{\sum_{n<N} \mathbbm EX_n}\sum_{n <N}
    X_n&\le
    \lim_{i\to\infty}\frac{(i+1)^2+O(1)}{i^2+O(1)}\\
    &=1.\label{eq:220}
  \end{align}
  Similar estimate shows that
  \begin{equation}
    \label{eq:221}
    \liminf_{N\to\infty}\frac1{\sum_{n<N} \mathbbm EX_n}\sum_{n<N}
    X_n\ge1,
  \end{equation}
  finishing our proof.
\end{proof}

\bibliography{cikk}
\end{document}